\documentclass[12pt, a4paper]{amsart}
\usepackage{amscd,amssymb,mathtools,mathrsfs}
\usepackage[shortlabels]{enumitem}
\setlength{\oddsidemargin}{0cm}
\setlength{\evensidemargin}{0cm}
\setlength{\topmargin}{0cm}
\setlength{\textwidth}{16cm}
\setlength{\textheight}{23cm}
\usepackage{tensor}
\usepackage{wrapfig}

\allowdisplaybreaks

\usepackage{color, url, hyperref}

\usepackage{soul}

\usepackage{graphicx}

\usepackage{adjustbox}

\usepackage{tikz}
\usetikzlibrary{cd,calc,overlay-beamer-styles,decorations.pathmorphing,arrows.meta}

\tikzset{
    vertex/.style={circle, fill=black, scale=0.3, outer sep=5pt},
    open-vertex/.style={circle, draw=black, scale=0.3, outer sep=5pt},
}

\hypersetup{colorlinks=true, linkcolor=blue, linkbordercolor=blue, citecolor=blue, citebordercolor=blue, linktocpage=true}

\DeclareMathOperator{\coker}{coker}

\newcommand{\ebar}{{\overline{e}}}

\DeclarePairedDelimiter{\abs}{\lvert}{\rvert}

\def\N{\mathbb{N}}
\def\Z{\mathbb{Z}}
\def\T{\mathbb{T}}

\def\Q{\mathbb{Q}}

\def\GG{\mathcal{G}}

\def\MM{\mathcal{M}}

\def\OO{\mathcal{O}}

\def\SS{\mathcal{S}}

\newtheorem{thm}{Theorem}
\newtheorem{lemma}[thm]{Lemma}
\newtheorem{prop}[thm]{Proposition}
\newtheorem{coro}[thm]{Corollary}

\theoremstyle{definition}
\newtheorem{defin}[thm]{Definition}

\newtheorem{egs}[thm]{Examples}

\theoremstyle{remark}
\newtheorem{remark}[thm]{Remark}
\newtheorem{notation}[thm]{Notation}
\newtheorem{construction}[thm]{Construction}

\begin{document}

\date{\today}
\title[$C^*$-algebras associated to directed graphs of groups]{$C^*$-algebras associated to directed graphs of groups, and models of Kirchberg algebras}

\author[Wu]{Victor Wu}
\address{Victor Wu, School of Mathematics and Statistics \\
    The University of Sydney}
\email{viwu8694@uni.sydney.edu.au}

\begin{abstract}
    We introduce $C^*$-algebras associated to directed graphs of groups. In particular, we associate a combinatorial $C^*$-algebra to each row-finite directed graph of groups with no sources, and show that this $C^*$-algebra is Morita equivalent to the crossed product coming from the corresponding group action on the boundary of a directed tree. Finally, we show that these $C^*$-algebras (and their Morita equivalent crossed products) contain the class of stable UCT Kirchberg algebras.
\end{abstract}

\maketitle

\section{Introduction}

\renewcommand*{\thethm}{\Alph{thm}}

Universal $C^*$-algebras for generators and relations encoding an underlying combinatorial object have long been studied because of their tractable nature, and their ability to prove rich sources of examples. The study of these $C^*$-algebras, commonly referred to as combinatorial $C^*$-algebras, began with the work of Cuntz and Krieger, who in \cite{CuntzKrieger} associated a $C^*$-algebra to a finite $\{0,1\}$-matrix. These algebras developed into directed graph $C^*$-algebras, which in turn have branched out into various generalisations, including $C^*$-algebras of higher rank graphs \cite{higher_rank_graphs} and left cancellative small categories \cite{LCSC} (see also \cite{Raeburn} for an overview).

One abundant class of combinatorial objects are \textit{graphs of groups}, which were introduced by Serre in \cite{Serre} and further studied by Bass in \cite{Bass}. Roughly speaking, these are (undirected) graphs where each vertex and edge has an associated group, and each edge group embeds into its adjacent vertex groups. The key idea of what is known as Bass--Serre theory is that graphs of groups ``encode" group actions on trees: given a group acting on a tree (with a mild assumption), we can construct a graph of groups from the quotient graph and the stabiliser subgroups at the vertices and edges; and from this graph of groups we can construct a group, called the \textit{fundamental group}, and an action of this group on a tree, called the \textit{Bass--Serre tree}, which is isomorphic to the original action. We provide a more detailed review of Bass--Serre theory in Section~\ref{sec:background}, which also contains other background definitions and theory necessary for the rest of the paper.

Graphs of groups were first studied in a $C^*$-algebraic setting in \cite{Okayasu}, which considered finite graphs of finite groups; these were then generalised to countable graphs of countable groups (with some regularity assumptions) in \cite{BMPST}. The authors of \cite{BMPST} also proved a `$C^*$-algebraic Bass--Serre theorem', which states that their graph of groups $C^*$-algebra is stably isomorphic to the crossed product coming from the action of the fundamental group on the boundary of the Bass--Serre tree.

In this paper, we consider what we call \textit{directed} graphs of groups: these are simply graphs of groups where the graph additionally carries an orientation. While these objects are not so interesting from a geometric group theory perspective (as the orientation data does not fundamentally change anything in the theory), they have found interest in other fields: for example, it was shown in \cite{levi_categories} that directed graph of groups are in a one-to-one correspondence with skeletal cancellative Levi categories.

Our motivation for studying directed graphs of groups comes from the observation that Bass--Serre theory implies that directed graphs of groups are in a one-to-one correspondence with group actions on \textit{directed} trees: we can simply `lift' the orientation of a directed graph of groups to the Bass--Serre tree. These actions fit into the framework studied in \cite{BSTW}, which provides structural theorems and $K$-theoretic formulae for crossed products coming from such actions.

In Section~\ref{sec:dgogs} we develop the theory of directed graphs of groups that we will use in the rest of the paper. As part of this, we give an alternate approach to associating a category (which we will call the \textit{word category}) to a directed graph of groups; this category matches the one from \cite{levi_categories}, but the construction is more direct and is more natural for our purposes.

Inspired by the relations in \cite{BMPST}, we associate a $C^*$-algebra to each directed graph of groups in Section~\ref{sec:dgog algebra}, where we also show that this algebra is isomorphic to the $C^*$-algebra of the word category of the directed graph of groups. These directed graph of groups $C^*$-algebras generalise directed graph $C^*$-algebras (in that for a directed graph of groups where all groups are trivial, the $C^*$-algebra of the directed graph of groups is isomorphic to that of the underlying directed graph), and many properties of directed graph $C^*$-algebras generalise to this broader class of $C^*$-algebras as well.

One such property is that these $C^*$-algebras are Morita equivalent to certain crossed products by commutative $C^*$-algebras. For a directed graph $E$, it was shown in \cite{Kumjian-Pask} that its $C^*$-algebra $C^*(E)$ is Morita equivalent to the crossed product $C_0(\partial T) \rtimes G$, where $T$ is the universal covering tree of $E$ and $G$ is the fundamental group of $E$, which acts naturally on $T$. For a directed graph of groups $\GG_+$, we have the following generalisation (this can also be viewed as a `$C^*$-algebraic \textit{directed} Bass--Serre theorem', in line with \cite{BMPST}).

\begin{thm}[Theorem~\ref{thm:morita equiv}]
    Let $\GG_+ = (\Gamma_+, G)$ be a countable, row-finite (connected) directed graph of groups with no sources, and choose a base vertex $x \in \Gamma^0$. Write $\pi_1(\GG, x)$ for the fundamental group of $\GG$ based at $x$, and write $X_{\GG_+, x}$ for the directed Bass--Serre tree based at $x$. Then $C^*(\GG_+)$ is Morita equivalent to $C_0(\partial X_{\GG_+,x}) \rtimes \pi_1(\GG,x)$.
\end{thm}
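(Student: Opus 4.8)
The plan is to establish a Morita equivalence by constructing an imprimitivity bimodule, following the strategy pioneered for directed graphs in \cite{Kumjian-Pask} and adapted to graphs of groups in \cite{BMPST}. The standard route is to realise $C^*(\GG_+)$ as a full corner in a crossed product, or more directly, to exhibit both algebras as endomorphisms of a common Hilbert module. My first step would be to give a concrete description of the directed Bass--Serre tree $X_{\GG_+,x}$ in terms of the word category of $\GG_+$: its vertices should correspond to suitable equivalence classes of ``words'' (reduced paths decorated by group elements) emanating from the base vertex $x$, and its boundary $\partial X_{\GG_+,x}$ should be identified with infinite such words, equipped with the natural topology making $C_0(\partial X_{\GG_+,x})$ a commutative $C^*$-algebra with a canonical basis of compact-open cylinder-set projections. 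The action of $\pi_1(\GG,x)$ on this boundary is the lift of the deck-transformation action, and I would check it is by homeomorphisms so that the crossed product is well-defined.

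Next I would build the bimodule. Let $A = C^*(\GG_+)$ and $B = C_0(\partial X_{\GG_+,x}) \rtimes \pi_1(\GG,x)$. The guiding principle is that the universal family of partial isometries and projections generating $A$ (the images of paths and group elements under the word-category presentation) should act on a space of functions on the boundary, realising a covariant representation that integrates to a homomorphism from one algebra into the multiplier algebra of the other. Concretely, I expect to construct a right Hilbert $B$-module $Z$ spanned by characteristic-function sections indexed by finite words, define a left $A$-action by the operators coming from the generating isometries of $C^*(\GG_+)$, and verify the two $C^*$-inner-product compatibility conditions. The pairing of words should encode both the concatenation/cancellation rules of the word category and the group multiplication, so that the $A$-valued inner product recovers the defining relations of $C^*(\GG_+)$ and the $B$-valued inner product recovers the crossed-product relations.

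The technical heart is verifying fullness on both sides and that the left action lands in the compacts, i.e. $A \cong \mathcal{K}_B(Z)$. Fullness of the $B$-valued inner product amounts to showing the cylinder projections and the group elements are all reached by inner products of elementary sections, which reduces to the \emph{no sources} and \emph{row-finite} hypotheses guaranteeing that every boundary point is the tail of some finite word and that the relevant sums converge; fullness of the $A$-valued inner product similarly uses that the generating projections of $A$ are exhausted. Identifying $A$ with $\mathcal{K}_B(Z)$ rather than merely a subalgebra of adjointable operators is where the universal property of $C^*(\GG_+)$ must be invoked: I would use the presentation to produce a homomorphism $A \to \mathcal{K}_B(Z)$ and then an inverse by recognising the rank-one operators as images of generators, appealing to a gauge-invariant uniqueness theorem for the word-category algebra to conclude injectivity.

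The main obstacle I anticipate is bookkeeping the group-element decorations coherently through the cancellation relations of the word category: unlike the plain directed-graph case, concatenating words involves the edge-group embeddings into vertex groups, so the inner-product formulas and the covariance identity for the $\pi_1(\GG,x)$-action carry nontrivial cocycle-type data, and ensuring these are consistent (associativity of the module actions, adjointability) is the step most likely to require care. A secondary difficulty is handling convergence of infinite sums of cylinder projections in the strict topology on the multiplier algebra when the graph of groups is only row-finite rather than finite; I would address this by working with finite approximations indexed by truncation length and passing to strict limits, exactly as in the row-finite directed-graph arguments of \cite{Kumjian-Pask}.
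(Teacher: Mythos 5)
Your route---an explicit imprimitivity bimodule between $A = C^*(\GG_+)$ and $B = C_0(\partial X_{\GG_+,x}) \rtimes \pi_1(\GG,x)$, in the spirit of \cite{Kumjian-Pask} and \cite{BMPST}---is genuinely different from the paper's, which never builds a bimodule: the paper instead identifies $C^*(\GG_+) \cong \OO(\Lambda_{\GG_+})$ with the $C^*$-algebra of the tight groupoid of the left inverse hull $\SS_{\GG_+}$ via \cite{LCSCsemigroup}, computes that groupoid to be $\SS_{\GG_+} \ltimes E_\Sigma^\infty \cong (F(\GG) \ltimes \partial W_{\GG_+})|_{E_\Sigma^\infty}$, and then observes that both $E_\Sigma^\infty$ and $\partial X_{\GG_+,x}$ are full open subsets of the unit space of the single ambient groupoid $F(\GG) \ltimes \partial W_{\GG_+}$, so that weak Kakutani equivalence (\cite{ample_groupoids}, \cite{Renault_groupoid}) yields the Morita equivalence. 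Your plan is not implausible in outline, but it has a concrete gap at its crucial injectivity step: you appeal to ``a gauge-invariant uniqueness theorem for the word-category algebra,'' and no such theorem exists in this paper or in the literature it cites. Nor is it a routine extension of the graph-algebra GIUT: with respect to the length functor $\Lambda_{\GG_+} \to \N$, every group element $u_{v,g}$ has degree zero, so the gauge-fixed core of $\OO(\Lambda_{\GG_+})$ is not AF but contains copies of the \emph{full} group $C^*$-algebras $C^*(G_v)$ for arbitrary countable vertex groups. Concluding injectivity would therefore require your boundary representation to be faithful on each $C^*(G_v)$, a full-versus-reduced problem the standard GIUT machinery does not touch; establishing such a uniqueness theorem is a substantial project in itself, and avoiding it is precisely why the paper routes through the tight-groupoid model, which delivers the isomorphism $\OO(\Lambda_{\GG_+}) \cong C^*(\SS_{\GG_+} \ltimes E_\Sigma^\infty)$ with no uniqueness theorem needed.

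There is a second structural issue your sketch does not confront: the two algebras do not naturally live over the same base. The diagonal of $C^*(\GG_+)$ has spectrum $E_\Sigma^\infty$, the normalised infinite $\GG_+$-words with range at \emph{every} vertex of $\Gamma^0$, whereas $B$ acts on boundary words with range $x$ only. So when $\Gamma^0$ has more than one vertex, a module spanned by sections indexed by finite words emanating from $x$ cannot yield a full $A$-valued inner product from the word-category pairing alone: one must incorporate connecting elements $\gamma \in xF(\GG)v$ of the fundamental groupoid, i.e., one is implicitly working inside $F(\GG) \ltimes \partial W_{\GG_+}$. Your diagnosis that fullness ``reduces to the no-sources and row-finite hypotheses'' misplaces the difficulty: those hypotheses govern the exhaustiveness relation (relation (4), or (S4)), i.e., that the module is big enough fibrewise, while the actual fullness problem is transitivity across vertex fibres---exactly the point where the paper invokes connectedness to verify that $E_\Sigma^\infty$ and $\partial X_{\GG_+,x}$ are \emph{full} open subsets. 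Any honest completion of your bimodule would end up reconstructing the Renault equivalence between these two restrictions, so you should either supply the missing uniqueness theorem and the base-change bookkeeping explicitly, or adopt the groupoid-restriction argument outright.
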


Section~\ref{sec:C* Bass-Serre thm} is dedicated to the proof of this theorem, which has two steps: first we find a tractable groupoid model for $C^*(\GG_+)$ with some help from the general results in \cite{LCSCsemigroup} concerning groupoid models for the $C^*$-algebras of categories; then we show that this groupoid is equivalent to the transformation groupoid $\pi_1(\GG, x) \ltimes \partial X_{\GG_+,x}$, whose $C^*$-algebra is the crossed product in the theorem statement.

Since Morita equivalence preserves $K$-theory and many structural properties of $C^*$-algebras such as simplicity and nuclearity, we can use the results of \cite{BSTW} to study these properties for directed graph of groups $C^*$-algebras. In Section~\ref{sec:Kirchberg}, the final section of this paper, we study these properties for the special case of directed graphs of \textit{infinite cyclic} groups (that is, directed graphs of groups where all vertex and edge groups are infinite cyclic); these are simple enough to study while also providing a rich class of examples of $C^*$-algebras.

As a demonstration of this, we turn to the study of Kirchberg algebras (simple, separable, nuclear and purely infinite $C^*$-algebras). By the celebrated Kirchberg--Phillips theorem (\cite{Kirchberg, Phillips}), stable Kirchberg algebras satisfying the Universal Coefficient Theorem (UCT) are classified completely by their $K$-theory, making this an interesting class of $C^*$-algebras to study. This classification result has led to the development of concrete models for Kirchberg algebras, such as Spielberg's hybrid graph algebras \cite{Spielberg_hybrid_graphs} and Katsura's topological graph algebras \cite{Katsura}, which have in turn led to a deeper understanding of Kirchberg algebras (see for example \cite{Kirchberg_actions}).

In this paper, we find sufficient conditions on a directed graph of infinite cyclic groups for its $C^*$-algebra to be a UCT Kirchberg algebra, and we also provide a formula for the $K$-theory of these algebras. Using these results, we are able to prove the following theorem, which provides two additional models of Kirchberg algebras.

\begin{thm}[Theorem~\ref{thm:dgog Kirchberg}] \label{thm:B}
    Let $A$ be a stable UCT Kirchberg algebra. There exists a countable, row-finite directed graph of infinite cyclic groups $\GG_+ = (\Gamma_+, G)$ with no sources, such that $A \cong C^*(\GG_+) \cong C_0(\partial X_{\GG_+, x}) \rtimes \pi_1(\GG_+, x)$ for any $x \in \Gamma^0$.
\end{thm}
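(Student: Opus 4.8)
The plan is to reduce the statement to a $K$-theoretic realisation problem and then solve that problem by an explicit combinatorial construction. By the Kirchberg--Phillips theorem, a stable UCT Kirchberg algebra is determined up to isomorphism by the pair $(K_0,K_1)$, and the range of this invariant over all stable UCT Kirchberg algebras is exactly all pairs of countable abelian groups. So it suffices to produce, for each pair of countable abelian groups $(G_0,G_1)$, a countable row-finite directed graph of infinite cyclic groups $\GG_+$ with no sources such that $C^*(\GG_+)$ is a stable UCT Kirchberg algebra with $K_0(C^*(\GG_+)) \cong G_0$ and $K_1(C^*(\GG_+)) \cong G_1$. Here I would invoke the two ingredients established earlier in Section~\ref{sec:Kirchberg}: the sufficient conditions guaranteeing that $C^*(\GG_+)$ is a UCT Kirchberg algebra, and the $K$-theory formula for these algebras.

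The second isomorphism in the statement would then come essentially for free from Theorem~\ref{thm:morita equiv}, which provides a Morita equivalence between $C^*(\GG_+)$ and $C_0(\partial X_{\GG_+,x}) \rtimes \pi_1(\GG,x)$; Morita equivalence preserves nuclearity, simplicity, pure infiniteness, the UCT and $K$-theory, so once $C^*(\GG_+)$ is a UCT Kirchberg algebra with the prescribed $K$-theory, so is the crossed product. I would arrange both algebras to be non-unital --- for a graph of groups this amounts to taking infinitely many vertices --- so that Zhang's dichotomy (a simple, purely infinite, $\sigma$-unital $C^*$-algebra is either unital or stable) forces both to be stable. Applying Kirchberg--Phillips to each of $C^*(\GG_+)$ and the crossed product, both of which then carry the same invariant $(G_0,G_1)=(K_0(A),K_1(A))$ as $A$, yields all three isomorphisms simultaneously, for any base vertex $x$ (the fundamental group and directed Bass--Serre tree being independent of $x$ up to isomorphism).

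The technical heart is the realisation of an arbitrary $(G_0,G_1)$. I expect the $K$-theory formula to express $K_*(C^*(\GG_+))$ through a six-term exact sequence built from the vertex and edge groups, each of which is $C^*(\Z)\cong C(\T)$ with $K_0=K_1=\Z$. This should produce two integer boundary maps $\delta_0,\delta_1\colon \bigoplus_e \Z \to \bigoplus_v \Z$ sharing the same support (dictated by the graph) and split short exact sequences
\[
    0 \to \coker\delta_0 \to K_0(C^*(\GG_+)) \to \ker\delta_1 \to 0, \qquad
    0 \to \coker\delta_1 \to K_1(C^*(\GG_+)) \to \ker\delta_0 \to 0,
\]
splitting because the kernels are free abelian. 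The crucial point --- and the reason infinite cyclic groups are needed rather than trivial ones --- is that the entries of $\delta_0$ are combinatorial (they count directed edges, exactly as for an ordinary graph algebra, which is why the kernel term there is free), whereas the entries of $\delta_1$ are those same edges weighted by the integers defining the edge-group embeddings; choosing these embedding integers freely lets $\coker\delta_1$ carry arbitrary torsion. Presenting each $G_i$ as the cokernel of an injective integer matrix (a subgroup of a free abelian group being free), I would build $\GG_+$ with a block structure making $\delta_0$ injective with $\coker\delta_0\cong G_0$ and $\delta_1$ injective with $\coker\delta_1\cong G_1$, so that injectivity kills the kernel terms and collapses the sequences to $K_0\cong G_0$ and $K_1\cong G_1$. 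This is the graph-of-groups incarnation of Katsura's realisation of arbitrary $K$-theory by a pair of integer matrices.

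I expect the main obstacle to be reconciling these algebraic requirements with the combinatorial constraints and the Kirchberg conditions at the same time. The matrix $\delta_0$ is not free data: its off-diagonal entries must be realisable as nonnegative edge counts, and the graph must be row-finite with no sources, so the chosen presentation of $G_0$ must be massaged into such a form (the standard device is to allow enough loops and auxiliary vertices). More seriously, the sufficient conditions for $C^*(\GG_+)$ to be a Kirchberg algebra demand that the graph of groups be cofinal with an appropriate aperiodicity/exit property (for simplicity) and that every vertex reach a loop (for pure infiniteness); naively taking a disjoint union of a ``$K_0$-block'' and a ``$K_1$-block'' destroys simplicity, whereas the connecting edges one adds to restore cofinality perturb both $\delta_0$ and $\delta_1$. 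The delicate part of the argument will therefore be to choose the connecting data so that it enforces simplicity, pure infiniteness, no sources and infinitely many vertices, while contributing only invertible blocks to $\delta_0$ and $\delta_1$ and hence leaving the computed $K$-theory equal to $(G_0,G_1)$.
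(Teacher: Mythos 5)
Your plan follows essentially the same route as the paper: reduce via Kirchberg--Phillips to realising an arbitrary pair $(G_0,G_1)$ of countable abelian groups; the paper's Proposition~\ref{prop:Ktheory} delivers exactly your split sequences, in the already-split form $K_0 \cong \coker(1-N)\oplus\ker(1-M)$ and $K_1 \cong \coker(1-M)\oplus\ker(1-N)$, where $N$ and $M$ record the embedding indices $n_e$ and $m_e$; and the realisation is carried out by a Katsura-style pair of block matrices (Construction~\ref{construction:N,M}), in which the auxiliary matrix $Y = 2\abs{T}+\abs{S}+X$ simultaneously enforces the support condition $N_{i,j}=0 \iff M_{i,j}=0$, no sources, cofinality, a loop with $n_e = 2$, and the denominator condition of Proposition~\ref{prop:Kirchberg conditions}, while contributing only blocks that are removed by elementary (invertible) row and column operations --- precisely your ``connecting data contributing only invertible blocks.'' One minor imprecision: the entries of $1-N$ are not bare edge counts as for an ordinary graph algebra, but counts weighted by the positive indices $n_e$; this is harmless, since only positivity of these entries is used.

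There is one genuine gap: your claim that non-unitality ``amounts to taking infinitely many vertices'' is false for the crossed product, and Zhang's dichotomy cannot be applied to it without a further argument. Non-unitality of $C_0(\partial X_{\GG_+,x})\rtimes\pi_1(\GG,x)$ is equivalent to non-compactness of $\partial X_{\GG_+,x}$, which does not follow from $\Gamma^0$ being infinite: for the infinite directed ray with trivial groups (countable, row-finite, no sources, infinitely many vertices), $C^*(\GG_+)\cong\mathcal{K}$ is non-unital, but the Bass--Serre tree is the ray itself, its boundary is a single point, and the crossed product is $\C$, which is unital. The paper closes this step explicitly in the proof of Theorem~\ref{thm:dgog Kirchberg}: using the loop $e$ at the base vertex $(0,0)$, it exhibits eventually directed boundary words beginning with arbitrarily many copies of the reversal $\overline{e}$, so the open cover $\{Z(\gamma G_{s(\gamma)}) : \gamma \in xF(\GG)\}$ admits no finite subcover and $\partial X_{\GG_+,x}$ is non-compact. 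Once you insert such an argument (which your construction supports, since the Kirchberg conditions force a loop to be present), the remainder of your plan --- Morita equivalence from Theorem~\ref{thm:morita equiv} upgraded to stable isomorphism by separability, Zhang's dichotomy on both algebras, and two applications of Kirchberg--Phillips --- goes through exactly as in the paper.
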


Notably, we show how to construct any stable UCT Kirchberg algebra as a crossed product $C^*$-algebra. Crossed product $C^*$-algebras, particularly those arising from a group action on a commutative $C^*$-algebra, are a fundamental class of examples of $C^*$-algebras, and have been greatly studied from the perspective of determining conditions under which they are Kirchberg algebras (\cite{purely_infinite_AD, crossed_products_classifiability, purely_infinite_JR, purely_infinite_LS}). It is known that all stable UCT Kirchberg algebras are crossed products by $A\T$-algebras \cite{Rordam}, but the construction is not concrete. Theorem~\ref{thm:B} is, to the best of our knowledge, the first concrete construction of (stable) UCT Kirchberg algebras as crossed products. We hope that this might help develop our understanding of Kirchberg algebras, as well as promote further research into the modelling of classifiable $C^*$-algebras by crossed products.

\vspace{0.2cm}

\textbf{Acknowledgements.} The author would like to thank his PhD supervisor Nathan Brownlowe for helpful conversations and comments during the preparation of this paper. The author was supported by an Australian Government Research Training Program Stipend Scholarship.

\renewcommand*{\thethm}{\arabic{thm}}
\numberwithin{thm}{section}
\section{Background} \label{sec:background}

In this section we collect the background material required for the paper. We recall definitions about graphs and trees in Section~\ref{subsec:background graphs}, left cancellative small categories and their $C^*$-algebras in Section~\ref{subsec:background LCSCs}, and groupoids of dynamical origin in Section~\ref{subsec:background groupoids}. Finally in Section~\ref{subsec:background gogs} we recount some of the theory of (undirected) graphs of groups.

\subsection{Graphs and trees} \label{subsec:background graphs}

We start by defining the notions of graph that we will use throughout this paper.

\begin{defin}
    A \textit{directed graph} $E = (E^0, E^1, r, s)$ consists of countable sets of \textit{vertices} $E^0$ and \textit{edges} $E^1$, along with \textit{range} and \textit{source} maps $r, s \colon E^1 \to E^0$. We say that $E$ is \textit{row-finite} if $r^{-1}(v)$ is finite for all $v \in E^0$ (that is, each vertex receives finitely many edges), and $E$ \textit{has no sources} if $r^{-1}(v) \neq \emptyset$ for all $v \in E^0$.
\end{defin}

\begin{defin}
    An \textit{undirected graph} (or simply a \textit{graph}) $\Gamma = (\Gamma^0, \Gamma^1, r, s)$ is a directed graph equipped with an ``edge reversal" map $e \mapsto \overline{e}$ on $\Gamma^1$ such that for all $e \in \Gamma^1$, we have
    \[ \overline{e} \neq e, \quad \overline{\overline{e}} = e, \quad \text{and} \quad s(e) = r(\overline{e}). \]
    An \textit{orientation} of $\Gamma$ is a subset $\Gamma_+^1$ of $\Gamma^1$ containing exactly one element of $\{e, \overline{e}\}$ for each $e \in \Gamma^1$.
\end{defin}

If $\Gamma_+^1$ is an orientation of the graph $\Gamma$, then we can consider the directed graph $\Gamma_+ = (\Gamma^0, \Gamma_+^1, r, s)$, where the range and source maps are inherited from $\Gamma$. The original graph $\Gamma$ can be recovered from $\Gamma_+$ by including a ``reversal" $\ebar$ for each $e \in \Gamma_+^1$; we call $\Gamma$ the \textit{underlying undirected graph} of $\Gamma_+$.

A \textit{path (of length $n$)} in a directed graph $E$ is a sequence of edges $e_1 e_2 \dots e_n$ in $E^1$ satisfying $s(e_i) = r(e_{i+1})$ for $1 \leq i \leq n-1$ (in the case that $n=0$, then a path is just a single vertex $v \in E^0$). We note that we are using the ``Australian" convention for path notation, to be consistent with \cite{BMPST} and \cite{BSTW}. For a path $\lambda = e_1 e_2 \dots e_n$ in $E$, we write $\abs{\lambda} = n$ for the length of $\lambda$. We write $E^n$ for the set of paths in $E$ of length $n$, and we write $v E^n$ for the subset of $E^n$ consisting of paths with range $v \in E^0$. We write $E^*$ for the set of all paths in $E$.

For (undirected) graphs, we have the additional notion of a \textit{reduced} path, which is a path $e_1 e_2 \dots e_n$ such that $e_{i+1} \neq \ebar_i$ for $1 \leq i < n$ (that is, there is no ``back-tracking" in the path). An undirected graph is \textit{connected} if there is a (reduced) path between any two vertices, and a directed graph is connected if its underlying undirected graph is connected. All directed and undirected graphs in this paper will be connected unless specified otherwise.

A \textit{tree} is a graph such that there is a unique reduced path from any vertex to any other vertex. A \textit{directed tree} is a directed graph whose underlying undirected graph is a tree.

We will be considering the following natural topological spaces arising from directed graphs and trees:
\begin{enumerate}[(1)]
    \item Let $E$ be a directed graph. We write $E^\infty$ for the space of infinite paths in $E$ (that is, infinite sequences of edges $e_1 e_2 \dots$ in $E^1$ with $s(e_i) = r(e_{i+1})$ for all $i \geq 1$) with topology generated by finite unions, intersections and set differences of subsets of the form $\lambda E^\infty$, $\lambda \in E^*$.
    
    \item Let $T$ be a directed tree. We can define an equivalence relation on the set of infinite paths in $T$ as follows. We say that two infinite paths $e_1 e_2 \dots$ and $f_1 f_2 \dots$ in $T$ are \textit{shift-tail equivalent} if there exist positive integers $N, M$ such that $e_{N+i} = f_{M+i}$ for all $i \geq 0$. The \textit{boundary} $\partial T$ of $T$ is the space of shift-tail equivalence classes of infinite paths in $T$, with topology generated by finite unions, intersections and set differences of subsets of the form $Z(v) := \{[\lambda] : \lambda \in vT^\infty\}$, $v \in T^0$.
\end{enumerate}

\subsection{Left cancellative small categories and their $C^*$-algebras} \label{subsec:background LCSCs}

We now recall definitions relating to left cancellative small categories and their $C^*$-algebras.

\begin{defin}
    A small category $\Lambda$ is \textit{left cancellative} if $\alpha \beta = \alpha \gamma \implies \beta = \gamma$ for all $\alpha, \beta, \gamma \in \Lambda$ with $s(\alpha) = r(\beta) = r(\gamma)$. We will refer to left cancellative small categories as \textit{LCSC}s for short.
\end{defin}

For any $\alpha \in \Lambda$, we will write $\alpha\Lambda$ to denote the set of morphisms in $\Lambda$ of the form $\alpha\lambda$, $\lambda \in \Lambda$.

\begin{defin}
    An LCSC $\Lambda$ is \textit{finitely aligned} if for every $\alpha, \beta \in \Lambda$ there is a finite set $F \subseteq \Lambda$ such that $\alpha \Lambda \cap \beta \Lambda = \bigcup_{\gamma \in F} \gamma \Lambda$. If $F$ can always be chosen to have at most one element (that is, for every $\alpha, \beta \in \Lambda$ either $\alpha \Lambda \cap \beta \Lambda = \emptyset$, or there is $\gamma \in \Lambda$ such that $\alpha \Lambda \cap \beta \Lambda = \gamma \Lambda$), then $\Lambda$ is \textit{singly aligned}.
\end{defin}

\begin{defin} \label{def:LCSC order}
    Let $\Lambda$ be a LCSC. Define a relation $\leq$ on $\Lambda$ by $\alpha \leq \beta$ if $\beta \in \alpha\Lambda$ (this relation is reflexive and transitive but not necessarily antisymmetric, so it is not a partial order in general). A \textit{common extension} of $\alpha, \beta \in \Lambda$ is an element $\gamma \in \Lambda$ such that $\alpha, \beta \leq \gamma$, and a common extension $\gamma$ of $\alpha, \beta$ is \textit{minimal} if there is no common extension $\delta$ of $\alpha$ and $\beta$ such that $\delta \leq \gamma$ and $\gamma \not\leq\delta$. Write $\alpha \vee \beta$ for the set of minimal common extensions of $\alpha$ and $\beta$.
\end{defin}

\begin{defin}
    Let $\Lambda$ be a LCSC, and let $C \subseteq \Lambda$. A subset $E \subseteq C$ is \textit{exhaustive} if for every $\alpha \in C$, there exists $\beta \in E$ such that $\alpha\Lambda \cap \beta\Lambda \neq \emptyset$.
\end{defin}

In \cite{LCSC}, Spielberg defined the \textit{Cuntz--Krieger algebra} for a LCSC. In the finitely aligned case, we have the following presentation by generators and relations, which we use here as an alternate definition:

\begin{defin}[{\cite[Theorem~10.15(i)]{LCSC}}]
    Let $\Lambda$ be a finitely aligned LCSC. A \textit{Cuntz--Krieger $\Lambda$-family} is a collection $\{S_\lambda : \lambda \in \Lambda\}$ satisfying
    \begin{enumerate}[(S1)]
        \item $S_\lambda^* S_\lambda = S_{s(\lambda)}$;
        \item $S_\lambda S_\mu = S_{\lambda\mu}$ if $s(\lambda) = r(\mu)$;
        \item $S_\lambda S_\lambda^* S_\mu S_\mu^* = \bigvee_{\nu \in \lambda \vee \mu} S_\nu S_\nu^*$;
        \item $S_v = \bigvee_{\lambda \in F} S_\lambda S_\lambda^*$ if $F \subseteq v\Lambda$ is a finite exhaustive set.
    \end{enumerate}
    The \textit{Cuntz--Krieger algebra $\OO(\Lambda)$} of $\Lambda$ is the universal $C^*$-algebra generated by a Cuntz--Krieger $\Lambda$-family.
\end{defin}

\begin{remark} \label{remark:LCSC family}
    Relations (S1) and (S2) imply that for each object $v \in \Lambda^0$ (which we identify with the identity morphism $1_v \in \Lambda$), the element $S_v = S_{1_v}$ is a projection (take $\lambda = \mu = 1_v$ in the relations). Thus (S1) implies that each $S_\lambda$, $\lambda \in \Lambda$ is a partial isometry.
\end{remark}

\subsection{Groupoids of dynamical origin} \label{subsec:background groupoids}

Here we recall the definitions of groupoid and inverse semigroup actions on spaces, and of groupoids that encode these actions. We refer the reader to the book \cite{Paterson} for a more in-depth discussion of these objects.

\begin{defin}
    A \textit{groupoid} $\mathscr{G}$ is a small category with inverses: that is, it has a set of units $\mathscr{G}^{(0)}$ and morphisms $\mathscr{G}$ such that:
    \begin{enumerate}
        \item each morphism $g \in \mathscr{G}$ has a \textit{range} $r(g) \in \mathscr{G}^{(0)}$ and a source $s(g) \in \mathscr{G}^{(0)}$;
        \item there is a partially defined composition on $\mathscr{G}$, where a pair of morphisms $(g, h) \in \mathscr{G} \times \mathscr{G}$ is composable if and only if $s(g) = r(h)$;
        \item each unit $x \in \mathscr{G}^{(0)}$ has a corresponding \textit{identity morphism} $x \in \mathscr{G}$ (to which it will be identified) satisfying $xg = g$ for all $g \in \mathscr{G}$ with range $x$, and $gx = g$ for all $g \in \mathscr{G}$ with source $x$; and
        \item each morphism $g \in \mathscr{G}$ has a (unique) \textit{inverse} $g^{-1} \in \mathscr{G}$ satisfying $gg^{-1} = r(g)$ and $g^{-1}g = s(g)$.
    \end{enumerate}
    
    A \textit{topological groupoid} is a groupoid which additionally carries a topology such that composition and inversion are continuous. All topological groupoids we consider will be \textit{Hausdorff \'etale}, meaning that they have a Hausdorff topology and the range map $r:\mathscr{G} \to \mathscr{G}$ is a local homeomorphism.
\end{defin}

If $x$ and  $y$ are units in $\mathscr{G}^{(0)}$, we will write $x\mathscr{G}$ to denote the set of morphisms in $\mathscr{G}$ with range $x$, and $x\mathscr{G}y$ to denote the set of morphisms in $\mathscr{G}$ with range $x$ and source $y$. If $U \subseteq \mathscr{G}^{(0)}$ is a subset of the unit space of $\mathscr{G}$, we write $\mathscr{G}|_U$ for the \textit{restriction of $\mathscr{G}$ to $U$}, which is the subgroupoid of $\mathscr{G}$ consisting of morphisms whose range and source are both in $U$.

\begin{defin}
    An \textit{inverse semigroup} is a semigroup $\SS$ such that for each element $s \in \SS$ there is a unique element $s^* \in \SS$ such that $ss^*s = s$ and $s^*ss^* = s^*$. A \textit{zero element} of $\SS$ is an element $0 \in S$ such that $s0 = 0s = 0$ for all $s \in \SS$.
\end{defin}

Both groupoids and inverse semigroups act on topological spaces by \textit{partial homeomorphisms}, which we now define.

\begin{defin}
    Let $X$ be a locally compact Hausdorff space. A \textit{partial homeomorphism} $\varphi$ of $X$ is a homeomorphism from a subspace $D(\varphi)$ of $X$ (the \textit{domain} of $\varphi$) to another subspace $R(\varphi)$ of $X$ (the \textit{range} of $\varphi$). For two partial homeomorphisms $\varphi, \psi$ of $X$, the composition $\varphi \circ \psi$ is the partial homeomorphism with domain $\psi^{-1}(R(\psi) \cap D(\varphi))$ and range $\varphi(R(\psi) \cap D(\varphi))$ satisfying $(\varphi \circ \psi)(x) = \varphi(\psi(x))$ for $x \in D(\varphi \circ \psi)$.
\end{defin}

\begin{defin}[Actions]
    Let $X$ be a locally compact Hausdorff space.
    \begin{enumerate}[(1)]
        \item An action of a groupoid $\mathscr{G}$ on $X$ consists of a surjective \textit{anchor map} $\tau \colon X \to \mathscr{G}^{(0)}$, and a collection of partial homeomorphisms $\{\varphi_g : g \in \mathscr{G}\}$ of $X$ such that
        \begin{enumerate}[(i)]
            \item for all $g \in \mathscr{G}$, we have $D(\varphi_g) = \tau^{-1}(s(g))$ and $R(\varphi_g) = \tau^{-1}(r(g))$;
            \item for all $g \in \mathscr{G}$, we have $\varphi_{g^{-1}} = \varphi_g^{-1}$; and
            \item for all $g, h \in \mathscr{G}$ with $s(g) = r(h)$, we have $\varphi_{gh} = \varphi_g \circ \varphi_h$.
        \end{enumerate}
        We will often write $g \cdot x$ instead of $\varphi_g(x)$, for $g \in \mathscr{G}$ and $x \in X$.
        
        \item An action of an inverse semigroup $\SS$ on $X$ is a collection of partial homeomorphisms $\{\varphi_s : s \in \SS\}$ of $X$ such that for all $s, t \in \SS$, we have that $\varphi_{s^*} = \varphi_s^{-1}$ and $\varphi_{st} = \varphi_s \circ \varphi_t$. As with groupoid actions, 
        we will often write $s \cdot x$ instead of $\varphi_s(x)$, for $s \in \SS$ and $x \in X$.
    \end{enumerate}
\end{defin}

\begin{defin}[Groupoids associated to actions] Let $X$ be a locally compact Hausdorff space.
    \begin{enumerate}[(1)]
        \item Suppose that the groupoid $\mathscr{G}$ acts on $X$. The \textit{transformation groupoid} of the action $\mathscr{G} \curvearrowright X$ is the set $\mathscr{G} \ltimes X := \{(g,x) : g \in \mathscr{G},\ x \in X,\ \tau(x) = s(g)\}$, where for $(g, x) \in \mathscr{G} \ltimes X$:
        \begin{enumerate}[(i)]
            \item $r(g, x) = x$ and $s(g, x) = g \cdot x$;
            \item if $(h, y) \in \mathscr{G}$ satisfies $y = g \cdot x$, then $(h,y)(g,x) = (hg,x)$; and
            \item $(g,x)^{-1} = (g^{-1}, g \cdot x)$.
        \end{enumerate}
        The topology of $\mathscr{G} \ltimes X$ is the subspace topology induced by the product topology on $\mathscr{G} \times X$ (where $\mathscr{G}$ has the discrete topology).
         
        \item Suppose that the inverse semigroup $\SS$ acts on $X$. Write $S*X = \{(s,x) : s \in \SS,\ x \in D_{s^*s}\}$ and define an equivalence relation on $S*X$ by $(s,x) \sim (t,y)$ if and only if $x=y$ and there is an idempotent $e \in \SS$ such that $x \in D_e$ and $se = te$. The \textit{groupoid of germs} of the action $\SS \curvearrowright X$ is the set $\SS \ltimes X = S*X/\sim$ with composition given by $[s,x][t,y] = [st,y]$ if $x = t \cdot y$, and with inverse given by $[s,x]^{-1} = [s^*, s \cdot x]$. The topology of $\SS \ltimes X$ is generated by open sets of the form $[s, U] := \{[s,x] : x \in U\}$ where $U \subseteq D_{s^* s} \subseteq X$ is an open set.
    \end{enumerate}
\end{defin}

To any Hausdorff \'etale groupoid $\mathscr{G}$ one can associate a $C^*$-algebra $C^*(\mathscr{G})$; we refer the reader to \cite{Renault_groupoid_book} for details. Here we just note that for an action of a group $G$ on a locally compact Hausdorff space $X$, the $C^*$-algebra $C^*(G \ltimes X)$ of the associated transformation groupoid is isomorphic to the \textit{full crossed product} $C_0(X) \rtimes G$ for the induced action of $G$ on $C_0(X)$.

\subsection{Graphs of groups} \label{subsec:background gogs}

In this subsection we briefly recall the theory of (undirected) graphs of groups. For a more detailed account, we refer the reader to \cite{BMPST}.

\begin{defin}
    A \textit{graph of groups} $\GG = (\Gamma, G)$ is a graph $\Gamma$ along with:
    \begin{enumerate}
        \item a \textit{vertex group} $G_v$ for each $v \in \Gamma^0$;
        \item an \textit{edge group} $G_e$ for each $e \in \Gamma^1$, such that $G_e = G_{\overline{e}}$ for all $e \in \Gamma^1$; and
        \item a monomorphism $\alpha_e \colon G_e \to G_{r(e)}$ for each $e \in \Gamma^1$.
    \end{enumerate}
\end{defin}

Graphs of groups naturally arise as ``quotient objects" of group actions on trees: if a group $G$ acts on a tree $T$ without inversions (that is, there are no $g \in G$, $e \in T^1$ with $g \cdot e = \overline{e}$), the action induces a graph of groups as follows. We let $\Gamma = G\backslash T$ be the quotient graph of $T$ under the action of $G$: that is, the graph whose vertices and edges are the vertex and edge orbits of $T$ respectively, and the range and source maps are induced from those of $T$. For each $v \in \Gamma^0$ we pick a lift $v' \in T^0$, and define the vertex group $G_v$ to be the stabiliser subgroup of $G$ at $v' \in T^0$; similarly for each $e \in \Gamma^1$ we pick a lift $e' \in T^1$, and define the edge group $G_e$ to be the stabiliser subgroup of $G$ at $e' \in T^1$. Finally, for each $e \in \Gamma^1$ we define the monomorphism $\alpha_e$ as follows. Write $v = r(e)$. Then $v'$ is in the same vertex orbit as $r(e')$, so there is some $g \in G$ such that $g \cdot r(e') = v'$. Now, each $h \in G_e$ fixes $e'$ and thus also fixes $r(e')$; then $ghg^{-1}$ fixes $v'$, so $ghg^{-1} \in G_v = G_{r(e)}$. Hence we can define $\alpha_e\colon G_e \to G_{r(e)}$ by $h \mapsto ghg^{-1}$.

In fact, from any graph of groups $\GG$, one can construct a group action on a tree whose induced graph of groups is isomorphic to $\GG$ (in the sense that there is an isomorphism of graphs, and there are isomorphisms of corresponding vertex and edge groups which preserve the monomorphisms); this is part of what is known as the Fundamental Theorem of Bass--Serre theory, which we quote as Theorem~\ref{thm:Bass-Serre}. Before we can state this theorem, we recount some more theory concerning graphs of groups. We start by discussing what is called the \textit{fundamental groupoid} of $\GG$, a notion introduced in \cite{fundamental_groupoid}.

\begin{defin}
    Let $\GG = (\Gamma, G)$ be a graph of groups. The \textit{fundamental groupoid $F(\GG)$} of $\GG$ is the groupoid defined by the following presentation. The vertex set of $F(\GG)$ is $\Gamma^0$; the morphisms of $F(\GG)$ are generated by $\Gamma^1 \sqcup \bigsqcup_{v \in \Gamma^0} G_v$, where the range and source of $e \in \Gamma^1$ in $F(\GG)$ are as they are in $\Gamma$, and for any $v \in \Gamma^0$ and $g \in G_v$ we have $r(g) = s(g) = v$; and the relations consist of the relations in each vertex group $G_v$, along with
    \begin{enumerate}[(i)]
        \item $\overline{e} = e^{-1}$ for all $e \in \Gamma^1$; and
        \item $\alpha_e(g) e = e \alpha_{\overline{e}}(g)$ for all $e \in \Gamma^1$ and $g \in G_e$.
    \end{enumerate}
\end{defin}

Elements of the fundamental groupoid can be represented by \textit{$\GG$-words}, which we will now define.

\begin{defin}
    Let $\GG = (\Gamma, G)$ be a graph of groups. A \textit{$\GG$-word (of length $n$)} is a sequence of the form $g_1 e_1 g_2 e_2 \dots g_n e_n g_{n+1}$, where $e_1 \dots e_n$ is a path in $\Gamma$, $g_i \in G_{r(e_i)}$ for $i = 1, \dots, n$, and $g_{n+1} \in G_{s(e_n)}$ (in the case where $n=0$, the $\GG$-word is just an element $g_1$ of some vertex group $G_v$). We set the range and source of a $\GG$-word to be $r(e_1)$ and $s(e_n)$ respectively (in the case where $n=0$, we set the range and source to be the vertex whose group the element $g_1$ belongs to).
\end{defin}

We will often identify a $\GG$-word with its image in $F(\GG)$. However, note that multiple $\GG$-words can represent the same element of $F(\GG)$. In order to find a unique representative for each element of $F(\GG)$, we need the concept of a \textit{normalised} word.

\begin{defin} \label{def:Sigma-normalised G-word}
    Let $\GG = (\Gamma, G)$ be a graph of groups. A \textit{set of transversals for $\GG$} is a set $\Sigma = \{\Sigma_e : e \in \Gamma^1\}$, where each $\Sigma_e$ is a transversal of $G_{r(e)}/\alpha_e(G_e)$ which includes the identity element $1_{r(e)} \in G_{r(e)}$. A $\GG$-word $g_1 e_1 \dots g_n e_n g_{n+1}$ is called \textit{$\Sigma$-normalised} if $g_i \in \Sigma_{e_i}$ for $i = 1, \dots, n$, and the $\GG$-word does not have any subsequence of the form $\overline{e} 1_{r(e)} e$.
\end{defin}

\begin{prop}[\mbox{\cite[Theorem]{fundamental_groupoid}}] \label{prop:F(G) Sigma-normalised rep}
    Let $\GG = (\Gamma, G)$ be a graph of groups, and let $\Sigma$ be a set of transversals for $\GG$. Each element of $F(\GG)$ is represented by a unique $\Sigma$-normalised word.
\end{prop}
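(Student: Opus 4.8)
The plan is to prove existence and uniqueness separately. Existence is a direct reduction argument; uniqueness rests on a van der Waerden--style representation of $F(\GG)$ on the set of $\Sigma$-normalised words.

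For existence I would begin from the observation that every element of $F(\GG)$ is represented by some $\GG$-word, since the generators $\Gamma^1 \sqcup \bigsqcup_{v} G_v$ are exactly the pieces of a $\GG$-word (inserting trivial group elements between consecutive edges and multiplying adjacent group elements where necessary). Given a $\GG$-word $g_1 e_1 \cdots g_n e_n g_{n+1}$, I would first make one left-to-right pass using relation (ii): writing $g_i = \sigma_i \alpha_{e_i}(h_i)$ with $\sigma_i \in \Sigma_{e_i}$ and $h_i \in G_{e_i}$, I replace $g_i e_i$ by $\sigma_i e_i \alpha_{\overline{e_i}}(h_i)$ and absorb the factor $\alpha_{\overline{e_i}}(h_i) \in G_{s(e_i)} = G_{r(e_{i+1})}$ into $g_{i+1}$. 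After this pass every $g_i$ with $i \le n$ lies in $\Sigma_{e_i}$. I would then remove back-trackings: whenever a subword $e_j\, 1_{r(e_j)}\, \overline{e_j}$ appears, relation (i) cancels it (merging the two neighbouring group elements), after which I re-run the normalising pass. Each cancellation lowers the number of edges by two while the pass never increases it, so the procedure terminates at a $\Sigma$-normalised word.

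The harder direction is uniqueness, and here the plan is to build an action. Let $W$ be the set of all $\Sigma$-normalised words and, for $v \in \Gamma^0$, let $W_v = \{w \in W : r(w) = v\}$. I would define, by recursion on word length, a bijection for each generator of $F(\GG)$ as follows. For $g \in G_v$ and $w = g_1 e_1 w' \in W_v$ of positive length, I write $g g_1 = \sigma\, \alpha_{e_1}(h)$ with $\sigma \in \Sigma_{e_1}$ and $h \in G_{e_1}$, and set
\[
    \theta_g(w) = \sigma\, e_1\, \theta_{\alpha_{\overline{e_1}}(h)}(w'),
\]
with $\theta_g(g_1) = g g_1$ on length-zero words; for $e \in \Gamma^1$ I set $\theta_e(w) = w'$ when $w = 1_{s(e)}\, \overline{e}\, w'$ and $\theta_e(w) = 1_{r(e)}\, e\, w$ otherwise. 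The crucial point, which I expect to be the main obstacle, is to check that these maps really land in $W$ (no back-tracking is created) and respect the defining relations. The key structural observation is that, because $w$ is already normalised, applying a group element can never produce a leading $1_{r(e)}\,\overline{e}$: such a cancellation would force the relevant transversal representative to be trivial, which would reintroduce a back-tracking that $w$ did not possess. Using this I would verify (a) that $g \mapsto \theta_g$ is a homomorphism $G_v \to \mathrm{Sym}(W_v)$, by induction on length using uniqueness of the transversal decomposition; (b) that $\theta_{\overline{e}} = \theta_e^{-1}$, the point being that normality of $w$ forbids the $\overline{e}$-cancellation exactly when the $e$-extension is called for; and (c) the commutation relation $\theta_{\alpha_e(g)} \circ \theta_e = \theta_e \circ \theta_{\alpha_{\overline{e}}(g)}$ for $g \in G_e$, which is the incarnation of relation (ii) and reduces to a short case analysis according to whether $w$ begins with $1_{s(e)}\,\overline{e}$.

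Once the relations are checked, the universal property of the presentation of $F(\GG)$ yields a groupoid homomorphism $\Phi$ from $F(\GG)$ into the groupoid of bijections among the sets $\{W_v\}_{v \in \Gamma^0}$. To finish, I would evaluate: for a normalised word $w$ with source $u$, a direct induction (peeling off $\theta_{g_1}, \theta_{e_1}, \dots$ and using that each step rebuilds one letter without cancellation) shows that $\Phi(w)$ sends the trivial word $1_u \in W_u$ back to $w$ itself. Hence if normalised words $w$ and $w'$ represent the same element $a \in F(\GG)$ (so in particular $s(w) = s(w') = u$), then $w = \Phi(a)(1_u) = w'$, establishing uniqueness and completing the proof.
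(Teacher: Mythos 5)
Your proposal is correct, but note that the paper contains no proof of this proposition to compare against: it is imported wholesale from Higgins via the citation \cite{fundamental_groupoid}. Your argument therefore supplies a self-contained alternative, and it is the classical one from combinatorial group theory: existence by a normalising pass (rewriting $g_i e_i = \sigma_i e_i \alpha_{\overline{e_i}}(h_i)$ via relation (ii), cancelling back-trackings via relation (i), terminating because each cancellation strictly reduces the edge count), and uniqueness by the van der Waerden trick of representing $F(\GG)$ on the set of $\Sigma$-normalised words and evaluating at the trivial word. You correctly isolate the crux: in the recursion for $\theta_g$, the group element propagated past the $j$-th edge always has the special form $\alpha_{\overline{e_j}}(h)$, so at a junction with $e_{j+1} = \overline{e_j}$ it lies in $\alpha_{\overline{e_j}}(G_{e_j}) = \alpha_{e_{j+1}}(G_{e_{j+1}})$ and hence does not change the coset of the next letter; the new transversal representative is trivial only if the old one was, contradicting normalisedness of the input. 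I checked that with this observation your verifications close as predicted: (a) follows by induction from uniqueness of the decomposition $x = \sigma\,\alpha_{e_1}(h)$; (b) follows because a normalised $w = 1_{s(e)}\,\overline{e}\,w'$ cannot have $w'$ beginning with $1_{r(e)}\,e$ (that would be the forbidden pattern inside $w$); (c) splits into the two cases you describe, with the coset observation needed again in the non-cancelling case; and the evaluation $\Phi(w)(1_u) = w$ peels off letters using $1_{r(e)} \in \Sigma_e$ and $\theta_1 = \operatorname{id}$. Two cosmetic slips: the back-tracking subword you cancel should be $e_j\,1_{s(e_j)}\,\overline{e_j}$ (the intermediate identity lives in $G_{s(e_j)} = G_{r(\overline{e_j})}$), not $e_j\,1_{r(e_j)}\,\overline{e_j}$; and in the final step one should remark that a word representing $a$ necessarily has the same source as $a$, so $1_u$ lies in the common domain of $\Phi(w)$ and $\Phi(w')$ --- both are immediate.
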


The fundamental groupoid $F(\GG)$ acts naturally on the following graph \cite[Section~2.4]{BMPST}.

\begin{defin} \label{def:W_G}
    Let $\GG = (\Gamma, G)$ be a graph of groups. Define the graph $W_\GG$ as follows. The vertex set of $W_\GG$ is
    \[ W_\GG^0 = \bigsqcup_{v,w \in \Gamma^0} v F(\GG) w / G_w = \{\gamma G_w : v,w \in \Gamma^1,\ \gamma \in v F(\GG) w\}; \]
    and for $\gamma \in v F(\GG) w$ and $\gamma' \in v' F(\GG) w'$, there is an edge in $W_\GG$ from $\gamma' G_{w'}$ to $\gamma G_w$ if and only if $v = v'$ and $\gamma^{-1} \gamma' \in G_v e G_w$ for some $e \in v \Gamma^1 w$.
\end{defin}

We note that $W_\GG$ is not connected in general. Each connected component of $W_\GG$ consists of the vertices $
\bigsqcup_{w \in \Gamma^0} x F(\GG) w / G_w$ for some unique, fixed $x \in \Gamma^0$; by \cite[Theorem~1.17]{Bass}, each of these components are trees, so $W_\GG$ is a forest.

The fundamental groupoid $F(\GG)$ acts on $W_\GG$ in the following way. Let $v, w \in \Gamma^0$ and take $\gamma' \in v F(\GG) w$. An element $\gamma \in F(\GG)$ acts on $\gamma' G_w \in W_\GG^0$ if and only if $v = s(\gamma)$, in which case we have $\gamma \cdot \gamma' G_w = \gamma\gamma' G_w$. Since $W_\GG$ is a forest, the action on the vertices induces an action on the graph, and one can check that this action is without inversions.

We can now define the group action on a tree induced by a graph of groups.

\begin{defin}
    Let $\GG = (\Gamma, G)$ be a graph of groups, and let $x \in \Gamma^0$. The \textit{fundamental group of $\GG$ based at $x$}, which we denote by $\pi_1(\GG, x)$, is the isotropy group of $F(\GG)$ at $x$. The \textit{Bass--Serre tree of $\GG$ based at $x$}, which we denote by $X_{\GG,x}$, is the connected component in $W_\GG$ of the vertex $G_x$ (equivalently, it is the restriction of $W_\GG$ to the vertices $\bigsqcup_{w \in \Gamma^0} x F(\GG) w/ G_w$).
\end{defin}

The action of $F(\GG)$ on $W_\GG$ restricts to an action of the group $\pi_1(\GG, x)$ on the tree $X_{\GG,x}$. The significance of this action comes from the Fundamental Theorem of Bass--Serre theory, which we now state.

\begin{thm} \label{thm:Bass-Serre}
    Let $\GG = (\Gamma, G)$ be a graph of groups, and let $x \in \Gamma^0$. The induced graph of groups for the action of $\pi_1(\GG, x)$ on $X_{\GG, x}$ is isomorphic to $\GG$. Conversely, if $\GG$ is induced from an action (without inversions) of a group $G$ on the tree $T$, then there is an isomorphism of groups $\pi_1(\GG, x) \cong G$ and an equivariant isomorphism of trees $X_{\GG, x} \cong T$.
\end{thm}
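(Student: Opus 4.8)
The statement is the classical Fundamental Theorem of Bass--Serre theory (see \cite{Serre, Bass}); the plan is to realise it inside the groupoid framework developed above, extracting both directions from the uniqueness of $\Sigma$-normalised representatives (Proposition~\ref{prop:F(G) Sigma-normalised rep}) together with the fact, cited from \cite{Bass}, that $X_{\GG,x}$ is a tree. Throughout I would fix a set of transversals $\Sigma$, so that each element of $F(\GG)$ has a canonical form.

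\textbf{The forward direction.} First I would identify the quotient graph $\pi_1(\GG,x)\backslash X_{\GG,x}$ with $\Gamma$. On vertices this is immediate from the description $X_{\GG,x}^0 = \bigsqcup_{w}xF(\GG)w/G_w$: since $\pi_1(\GG,x) = xF(\GG)x$ acts by left multiplication and $xF(\GG)x\cdot\gamma = xF(\GG)w$ for any $\gamma\in xF(\GG)w$, each $w\in\Gamma^0$ indexes exactly one vertex orbit; a parallel computation using the adjacency relation of Definition~\ref{def:W_G} matches the edge orbits with the edges of $\Gamma$. Next I would compute stabilisers of chosen lifts. For the base vertex the lift $1_xG_x$ has stabiliser $\{g\in xF(\GG)x: gG_x = G_x\} = G_x$; for a general $w$ a lift $\gamma G_w$ has stabiliser $\gamma G_w\gamma^{-1}$, a subgroup of $\pi_1(\GG,x)$ isomorphic to $G_w$, and the analogous computation for an edge $e$ yields a conjugate of $\alpha_e(G_e)\cong G_e$. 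Finally I would verify that, after these identifications, the inclusion of an edge stabiliser into its adjacent vertex stabiliser is carried onto the monomorphism $\alpha_e$; this is a direct consequence of relation (ii) in the presentation of $F(\GG)$ and the choice of lifts.

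\textbf{The converse direction.} Given an action $G\curvearrowright T$ without inversions inducing $\GG$, with chosen lifts $v'\in T^0$, $e'\in T^1$ and elements $g_e\in G$ satisfying $g_e\cdot r(e') = (r(e))'$, I would define a functor $\Phi\colon F(\GG)\to G$ (viewing $G$ as a one-object groupoid) on generators by $\Phi(g) = g$ for $g\in G_v$ and $\Phi(e) = g_e g_{\overline e}^{-1}$. A short calculation confirms that $\Phi$ respects the defining relations of $F(\GG)$: since $\overline{\overline e}=e$ one gets $\Phi(\overline e)=g_{\overline e}g_e^{-1}=\Phi(e)^{-1}$, giving relation (i), and for relation (ii) one checks $\alpha_e(g)\,\Phi(e) = g_e g g_{\overline e}^{-1} = \Phi(e)\,\alpha_{\overline e}(g)$ using $\alpha_e(g) = g_eg g_e^{-1}$. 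Restricting to isotropy yields a homomorphism $\Phi\colon\pi_1(\GG,x)\to G$, and I would build the candidate equivariant tree map $\theta\colon X_{\GG,x}\to T$ by $\theta(\gamma G_w) = \Phi(\gamma)\cdot w'$; this is well defined because $G_w$ fixes $w'$, and it is $\Phi$-equivariant by construction. Surjectivity of both $\Phi$ and $\theta$ would follow from connectedness of $T$: any vertex or group element is reached by a path from $x'$, which projects to a walk in $\Gamma$ and lifts to a $\GG$-word.

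\textbf{The main obstacle.} The delicate point in both directions is injectivity---showing that $F(\GG)$, and hence $\pi_1(\GG,x)$ and $X_{\GG,x}$, exhibits no collapsing beyond what the relations force. For the converse this amounts to proving that if $\Phi(\gamma) = 1$ for $\gamma\in\pi_1(\GG,x)$ then $\gamma$ is trivial: one represents $\gamma$ by its unique $\Sigma$-normalised word and argues that a nontrivial such word would trace out a reduced loop in $T$, contradicting that $T$ is a tree. The same uniqueness of normalised representatives, fed through Definition~\ref{def:W_G}, is what forces $\theta$ to be injective on vertices and to both preserve and reflect edges. I expect the bulk of the effort to lie in this bookkeeping---translating between $\Sigma$-normalised words, reduced paths in $X_{\GG,x}$, and reduced paths in $T$---rather than in any single conceptual step.
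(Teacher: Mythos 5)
The paper does not prove this statement at all: Theorem~\ref{thm:Bass-Serre} is quoted as background (the Fundamental Theorem of Bass--Serre theory), with the surrounding machinery cited from \cite{Serre}, \cite{Bass} and \cite{fundamental_groupoid} --- note in particular that the paper imports ``each component of $W_\GG$ is a tree'' from \cite[Theorem~1.17]{Bass} rather than proving it. So there is no in-paper proof to compare against; what I can do is assess your sketch on its own terms. It is a faithful outline of the classical argument, in its groupoid-flavoured form. The formulas you propose are correct: with $g_e$ satisfying $g_e \cdot r(e') = (r(e))'$ and lifts chosen compatibly ($\overline{e}' = \overline{e'}$, which is needed anyway for $G_e = G_{\overline{e}}$ in the induced graph of groups --- worth making explicit), one checks $\Phi(\overline{e}) = g_{\overline{e}} g_e^{-1} = \Phi(e)^{-1}$ and $\Phi(\alpha_e(g))\Phi(e) = g_e g g_e^{-1} g_e g_{\overline{e}}^{-1} = g_e g g_{\overline{e}}^{-1} = \Phi(e)\Phi(\alpha_{\overline{e}}(g))$, so $\Phi$ descends from $F(\GG)$; the map $\theta(\gamma G_w) = \Phi(\gamma)\cdot w'$ is well defined since $\Phi|_{G_w}$ is the inclusion and $G_w$ fixes $w'$, and the edge $e$ is carried to $g_e \cdot e'$ since $\Phi(e)\cdot s(e)' = g_e g_{\overline{e}}^{-1} g_{\overline{e}} \cdot s(e') = g_e \cdot s(e')$. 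Your forward-direction stabiliser computations ($\gamma G_w \gamma^{-1}$ for vertices, a conjugate of $\alpha_e(G_e)$ for edges) are likewise the standard ones.

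The one place the sketch stays genuinely schematic is the crux you yourself flag: injectivity. Deducing ``$\Phi(\gamma)=1 \Rightarrow \gamma = 1$'' and the edge-reflecting property of $\theta$ from Proposition~\ref{prop:F(G) Sigma-normalised rep} requires the bridge lemma that a nontrivial $\Sigma$-normalised word traces a \emph{reduced} edge path in $T$; concretely, if the image path backtracked at some letter $h_i e_i$, translating back along the path would force $h_i \in \alpha_{e_i}(G_{e_i})$ or produce a forbidden subword $\overline{e}\,1_{r(e)}\,e$, contradicting normalisation. This is exactly the content of Bass's structure theory (and is where essentially all the work in \cite{Bass} lives), so a complete write-up would need this lemma proved rather than gestured at; also note that injectivity of $\theta$ on vertices needs the slightly stronger statement $\Phi^{-1}(G_w) \cap wF(\GG)w = G_w$, not just triviality of $\ker(\Phi|_{\pi_1(\GG,x)})$. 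As a proposal, though, the decomposition is correct and matches the standard proofs the paper cites.
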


\begin{remark} \label{rmk:B-S tree lifts}
    Let $\GG = (\Gamma, G)$ be a graph of groups, and let $x \in \Gamma^0$. For any vertex $v \in \Gamma^0$, the lifts of $v$ in the Bass--Serre tree $X_{\GG, x}$ are precisely the vertices
    \[ x F(\GG) v / G_v = \{ \gamma G_v : \gamma \in x F(\GG) v\}. \]
    For any edge $e \in \Gamma^1$, the lifts of $e$ in $X_{\GG, x}$ are the edges with range $\gamma G_{r(e)}$ for some $\gamma \in x F(\GG) r(e)$ and with source $\gamma g e G_{s(e)}$ for some $g \in G_{r(e)}$.
\end{remark}

\section{Directed graphs of groups} \label{sec:dgogs}

In this section, we introduce and develop some theory for \textit{directed graph of groups}. We start by defining directed graphs of groups in Section~\ref{subsec:dgogs}, as well as explaining how they relate to group actions on directed trees. In Section~\ref{subsec:word category}, we study a canonical (left cancellative small) category associated to a directed graph of groups, which we call its \textit{word category}. Finally, in Section~\ref{subsec:F(G) boundary action} we study an action, canonically associated to a directed graph of groups, of a groupoid on a directed forest.

\subsection{Directed graphs of groups} \label{subsec:dgogs}

We start by defining directed graphs of groups, and then briefly discuss a `directed version' of Bass--Serre theory.

\begin{defin}
    A \textit{directed graph of groups} $\GG_+ = (\Gamma_+, G)$ is a graph of groups $\GG = (\Gamma, G)$ where the graph $\Gamma$ additionally carries an orientation $\Gamma_+^1$. We call $\GG$ the \textit{underlying graph of groups of $\GG_+$}.
\end{defin}

\begin{remark}
    We note that directed graphs of groups have also been studied in \cite{levi_categories}, where they were called `graphs of groups with a given orientation'.
\end{remark}

Just as graphs of groups are induced from group actions on trees, \textit{directed} graphs of groups are induced from group actions on \textit{directed} trees. Indeed, if a group $G$ acts on a directed tree $T_+$, then it acts on the underlying undirected tree $T$ without inversion; this induces a graph of groups $\GG = (\Gamma, G)$, and we can give $\Gamma$ the orientation induced by the orientation on $T$, yielding a directed graph of groups $\GG_+ = (\Gamma_+, G)$. Moreover, we can recover the action $G \curvearrowright T_+$ from $\GG_+$, like in the undirected case: the acting group is still the fundamental group $\pi_1(\GG, x)$, and the directed tree is the Bass--Serre tree $X_{\GG, x}$ with a suitable orientation, which we now describe.

\begin{defin}
    Let $\GG_+ = (\Gamma_+, G)$ be a directed graph of groups, and choose a base vertex $x \in \Gamma^0$. The \textit{directed Bass--Serre tree $X_{\GG_+,x}$} of $\GG_+$ based at $x$ is the directed tree with $X_{\GG,x}$ as the underlying undirected tree, and where an edge from $\gamma'G_w$ to $\gamma G_v$ is in the orientation if and only if $\gamma^{-1}\gamma' \in G_v e G_w$ for some $e \in v\Gamma_+^1 w$.
\end{defin}

We now state and prove the directed analogue of the fundamental theorem of Bass--Serre theory (Theorem~\ref{thm:Bass-Serre}).

\begin{thm} \label{thm:directed Bass-Serre}
    Let $\GG_+ = (\Gamma_+, G)$ be a directed graph of groups, and choose a base vertex $x \in \Gamma^0$. The induced directed graph of groups for the action of $\pi_1(\GG,x)$ on $X_{\GG_+,x}$ is isomorphic to $\GG_+$. Conversely, if $\GG_+$ is induced from an action of a group $G$ on a directed tree $T_+$, then there is an isomorphism of groups $\pi_1(\GG, x) \cong G$ and an equivariant isomorphism of directed trees $X_{\GG_+, x} \cong T_+$.
\end{thm}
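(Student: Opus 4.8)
The plan is to bootstrap from the undirected Fundamental Theorem of Bass--Serre theory (Theorem~\ref{thm:Bass-Serre}). Since a directed graph of groups is nothing but a graph of groups $\GG$ together with an orientation $\Gamma_+^1$, and the directed Bass--Serre tree $X_{\GG_+,x}$ has the undirected tree $X_{\GG,x}$ as its underlying tree, Theorem~\ref{thm:Bass-Serre} already supplies the isomorphism of underlying graphs of groups and the equivariant isomorphism of underlying undirected trees. The only extra content of the directed statement is that these isomorphisms are compatible with the orientations, so the entire proof reduces to tracking orientation data through the constructions.

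The key observation I would isolate first is that the orientation on $X_{\GG_+,x}$ is exactly the pullback of $\Gamma_+^1$ along the quotient map $X_{\GG,x} \to \pi_1(\GG,x)\backslash X_{\GG,x} \cong \Gamma$. To see this, I would take an edge $e \in \Gamma^1$ and use Remark~\ref{rmk:B-S tree lifts}: a lift of $e$ has range $\gamma G_{r(e)}$ and source $\gamma g e\, G_{s(e)}$ for some $g \in G_{r(e)}$. Writing $\gamma' = \gamma g e$ for the source-side representative gives $\gamma^{-1}\gamma' = ge \in G_{r(e)}\, e\, G_{s(e)}$, so by the definition of the orientation on $X_{\GG_+,x}$ this lift is oriented if and only if $e \in r(e)\Gamma_+^1 s(e)$; by uniqueness of normalised representatives (Proposition~\ref{prop:F(G) Sigma-normalised rep}) no other edge shares this double coset, so this happens precisely when $e \in \Gamma_+^1$. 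A one-line computation (left-translating $\gamma,\gamma'$ by $\delta \in \pi_1(\GG,x)$ leaves $\gamma^{-1}\gamma'$ fixed) shows the action preserves this orientation, so $\pi_1(\GG,x)$ genuinely acts on the directed tree $X_{\GG_+,x}$.

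For the first assertion I would then apply Theorem~\ref{thm:Bass-Serre} to get the isomorphism of $\GG$ with the graph of groups induced by $\pi_1(\GG,x) \curvearrowright X_{\GG,x}$, under which $e \in \Gamma^1$ corresponds to the orbit of its lifts. The induced orientation on the quotient is by definition the image of the orientation on $X_{\GG_+,x}$, which by the key observation is exactly $\Gamma_+^1$; hence the isomorphism is one of directed graphs of groups. For the converse, suppose $\GG_+$ is induced from an action $G \curvearrowright T_+$. Theorem~\ref{thm:Bass-Serre} gives $\pi_1(\GG,x) \cong G$ and a $G$-equivariant tree isomorphism $\psi \colon X_{\GG,x} \to T$ commuting with the quotient maps onto $\Gamma = G\backslash T$. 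On the $T_+$ side the orientation $T_+^1$ is $G$-invariant and $\Gamma_+^1$ is by construction its image, so an edge of $T$ lies in $T_+^1$ if and only if it covers an edge of $\Gamma_+^1$; combined with the key observation on the $X_{\GG_+,x}$ side, an edge of $X_{\GG_+,x}$ is oriented iff it covers a $\Gamma_+^1$-edge iff its $\psi$-image covers a $\Gamma_+^1$-edge iff its $\psi$-image lies in $T_+^1$. Thus $\psi$ is an isomorphism of directed trees.

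I expect the only genuine obstacle to be bookkeeping: ensuring the three orientations in play --- the one defining $X_{\GG_+,x}$, the orientation descended to the quotient of $X_{\GG,x}$, and the orientation $\Gamma_+^1$ arising from $T_+$ --- are all consistently identified under the various quotient maps. Once the principle ``orientation $=$ pullback of $\Gamma_+^1$'' is pinned down, both directions fall out of Theorem~\ref{thm:Bass-Serre} with no further work.
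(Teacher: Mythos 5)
Your proposal is correct and takes essentially the same route as the paper: both reduce to Theorem~\ref{thm:Bass-Serre} and verify orientation compatibility by taking a lift of $e \in \Gamma_+^1$ as in Remark~\ref{rmk:B-S tree lifts} and computing $\gamma^{-1}(\gamma g e) = g e \in G_{r(e)}\, e\, G_{s(e)}$. Your write-up is somewhat more careful than the paper's (which only checks that the quotient orientation matches $\Gamma_+^1$ and treats the converse as immediate), spelling out the equivariance of the orientation and the orientation bookkeeping for the converse via $\psi$, but the core argument is identical.
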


\begin{proof}
    In light of Theorem~\ref{thm:Bass-Serre}, all that we need to check is that the orientation of the quotient graph $\pi_1(\GG, x) \backslash X_{\GG_+, x}$ matches the orientation of $\Gamma_+$. Take $e \in \Gamma_+^1$, and take some lift of $e$ in $X_{\GG, x}$ (by Remark~\ref{rmk:B-S tree lifts}, the lift goes from $\gamma g e G_{s(e)}$ to $\gamma G_{r(e)}$ for some $\gamma \in x F(\GG) r(e)$ and $g \in G_{r(e)}$). Remark~\ref{rmk:B-S tree lifts} also gives that $\gamma G_{r(e)}$ is a lift of the vertex $r(e)$ and $\gamma g e G_{s(e)}$ is a lift of the vertex $s(e)$. Since $\gamma^{-1}(\gamma g e) = g e 1_{s(e)} \in G_{r(e)} e G_{s(e)}$, there is an edge in the orientation of $X_{\GG_+, x}$ from $\gamma g e G_{s(e)}$ to $\gamma G_{r(e)}$, which matches the direction of $e$ in $\Gamma_+^1$. This proves the theorem.
\end{proof}

\pagebreak
We end this subsection with some examples.

\begin{egs}~
    \begin{enumerate}[(1)]
        \item Consider the following directed graph of groups (where $1$ denotes the trivial group, and the monomorphisms are the trivial inclusions).
        \[\begin{tikzpicture}[scale=2.5]
            \node[vertex, label=below:{$v$}, label=above:{$\Z_2$}] (v) at (0,0) {};
            \node[open-vertex, label=below:{$w$}, label=above:{$\Z_3$}] (w) at (1,0) {};
            \draw[-stealth] (w) -- node[midway, label=below:{$e$}, label=above:{$1$}] {} (v);
        \end{tikzpicture}\]
        We have transversals $\Sigma_e = \Z_2 = \{1, a\}$ and $\Sigma_{\overline{e}} = \Z_3 = \{1, b, b^2\}$, and $\GG$-words have edge sequence alternating between $e$ and $\overline{e}$. The fundamental group $\pi_1(\GG, v)$ is the free product $\Z_2 * \Z_3$ with generators $a$ and $eb\ebar$ of order 2 and 3 respectively, and the directed Bass--Serre tree is the following.
        
        \[\begin{tikzpicture}[scale=1.5]
            \node[vertex, label=above:{$\Z_2$}] (v) at (0,0) {};
            
            \node[open-vertex, label=right:{$1e\Z_3$}]
                (0e) at (1,0) {};
            \node[open-vertex, label=left:{$ae\Z_3$}]
                (1e) at (-1,0) {};
            
            \node[vertex, label=right:{$1eb\overline{e}\Z_2$}]
                (0e1f) at ($(1,0) + (60:1)$) {};
            \node[vertex, label=right:{$1eb^2\overline{e}\Z_2$}]
                (0e2f) at ($(1,0) + (-60:1)$) {};
            
            \node[open-vertex, label=left:{$1eb\overline{e}ae\Z_3$}]
                (0e1f1e) at ($(1,0) + (60:2)$) {};
            \node[open-vertex, label=left:{$1eb^2\overline{e}ae\Z_3$}]
                (0e2f1e) at ($(1,0) + (-60:2)$) {};
            
            \node[vertex, label=right:{$1eb\overline{e}aeb^2\overline{e}\Z_2$}]
                (0e1f1e2f) at ($(0e1f1e) + (120:1)$) {};
            \node at ($(0e1f1e2f) + (120:0.5)$) {\rotatebox{120}{$\cdots$}};
            \node[vertex, label=above:{$1eb\overline{e}aeb\overline{e}\Z_2$}]
                (0e1f1e1f) at ($(0e1f1e) + (0:1)$) {};
            \node at ($(0e1f1e1f) + (0:0.5)$) {$\cdots$};
                
            \node[vertex, label=right:{$1eb^2\overline{e}aeb\overline{e}\Z_2$}]
                (0e2f1e1f) at ($(0e2f1e) + (-120:1)$) {};
            \node at ($(0e2f1e1f) + (-120:0.5)$) {\rotatebox{-120}{$\cdots$}};
            \node[vertex, label=below:{$1eb^2\overline{e}aeb^2\overline{e}\Z_2$}]
                (0e2f1e2f) at ($(0e2f1e) + (0:1)$) {};
            \node at ($(0e2f1e2f) + (0:0.5)$) {$\cdots$};
            
            \node[vertex, label=left:{$aeb\overline{e}\Z_2$}]
                (1e1f) at ($(-1,0) + (120:1)$) {};
            \node[vertex, label=left:{$aeb^2\overline{e}\Z_2$}]
                (1e2f) at ($(-1,0) + (-120:1)$) {};
            
            \node[open-vertex, label=right:{$aeb\overline{e}ae\Z_3$}]
                (1e1f1e) at ($(-1,0) + (120:2)$) {};
            \node[open-vertex, label=right:{$aeb^2\overline{e}ae\Z_3$}]
                (1e2f1e) at ($(-1,0) + (-120:2)$) {};
                
            \node[vertex, label=left:{$aeb\overline{e}aeb^2\overline{e}\Z_2$}]
                (1e1f1e2f) at ($(1e1f1e) + (60:1)$) {};
            \node at ($(1e1f1e2f) + (60:0.5)$) {\rotatebox{60}{$\cdots$}};
            \node[vertex, label=above:{$aeb\overline{e}aeb\overline{e}\Z_2$}]
                (1e1f1e1f) at ($(1e1f1e) + (180:1)$) {};
            \node at ($(1e1f1e1f) + (180:0.5)$) {$\cdots$};
                
            \node[vertex, label=left:{$aeb^2\overline{e}aeb\overline{e}\Z_2$}]
                (1e2f1e1f) at ($(1e2f1e) + (-60:1)$) {};
            \node at ($(1e2f1e1f) + (-60:0.5)$) {\rotatebox{-60}{$\cdots$}};
            \node[vertex, label=below:{$aeb^2\overline{e}aeb^2\overline{e}\Z_2$}]
                (1e2f1e2f) at ($(1e2f1e) + (180:1)$) {};
            \node at ($(1e2f1e2f) + (180:0.5)$) {$\cdots$};
            
            \draw[-stealth]
                (0e) edge (v)
                (1e) edge (v)
                
                (0e) edge (0e1f)
                (0e) edge (0e2f)
                
                (1e) edge (1e1f)
                (1e) edge (1e2f)
                
                (0e1f1e) edge (0e1f)
                (0e2f1e) edge (0e2f)
                
                (1e1f1e) edge (1e1f)
                (1e2f1e) edge (1e2f)
                
                (0e1f1e1f) edge (0e1f1e)
                (0e1f1e2f) edge (0e1f1e)
                
                (0e2f1e1f) edge (0e2f1e)
                (0e2f1e2f) edge (0e2f1e)
                
                (1e1f1e1f) edge (1e1f1e)
                (1e1f1e2f) edge (1e1f1e)
                
                (1e2f1e1f) edge (1e2f1e)
                (1e2f1e2f) edge (1e2f1e);
        \end{tikzpicture}\]
        
        The fundamental group acts on the tree as follows: the copy of $\Z_2$ in $\pi_1(\GG, v)$ acts by flipping about the vertex $\Z_2$, and the copy of $\Z_3$ acts by rotating about the vertex $1e\Z_3$. The solid vertices in the tree are the lifts of the vertex $v$ from the directed graph of groups, and the unfilled vertices are the lifts of the vertex $w$.
        
        \item Consider the following directed graph of groups.
        \[\begin{tikzpicture}
            \node[vertex, label=right:{$v$}, label=left:{$\Z$}] (v) at (0,0) {};
            \draw[-stealth] ($(v) + ({1-cos(7)},{-sin(7)})$) arc (-173:173:1) node[midway, label={[label distance=-3pt]left:{$e$}}, label={[label distance=-3pt]right:{$\Z$}}] {};
            
            \node at ($(1,0) + (145:1.4)$) {$\times 2$};
            \node at ($(1,0) + (-145:1.4)$) {$\times 1$};
        \end{tikzpicture}\]
        Here, the $\times 2$ and $\times 1$ represent the monomorphisms $\alpha_e$ and $\alpha_\ebar$ respectively, which are defined by $\alpha_e(1) = 2$ and $\alpha_\ebar(1) = 1$. We have transversals $\Sigma_e = \{0,1\}$ and $\Sigma_\ebar = \{0\}$. The fundamental group $\pi_1(\GG, v)$ turns out to be the Baumslag--Solitar group $BS(1,2) = \langle a, b : a b = b^2 a \rangle$, with $a$ corresponding to the element $e \in \pi_1(\GG, v)$ and $b$ corresponding to the generator of the vertex group $\Z$. The directed Bass--Serre tree is the following.
        
        \[\begin{tikzpicture}
            \node[vertex, label=above:{$0\ebar\Z$}, label=left:{$\cdots$}] (0f) at (0,0) {};
            
            \node[vertex, label=above:{$\Z$}] (v) at (3,2) {};
            \node[vertex, label=below:{$0\ebar 1e\Z$}] (0f1e) at (3,-2) {};

            \node[vertex, label=above:{$0e\Z$}] (0e) at ($(v) + (2,1)$) {};
            \node[vertex, label=below:{$1e\Z$}] (1e) at ($(v) + (2,-1)$) {};
            \node[vertex, label=above:{$0\ebar 1e0e\Z$}] (0f1e0e) at ($(0f1e) + (2,1)$) {};
            \node[vertex, label=below:{$0\ebar 1e1e\Z$}] (0f1e1e) at ($(0f1e) + (2,-1)$) {};

            \node[vertex, label=above:{$0e0e\Z$}, label=right:{$\cdots$}] (0e0e) at ($(0e) + (2,0.5)$) {};
            \node[vertex, label=above:{$0e1e\Z$}, label=right:{$\cdots$}] (0e1e) at ($(0e) + (2,-0.5)$) {};
            \node[vertex, label=above:{$1e0e\Z$}, label=right:{$\cdots$}] (1e0e) at ($(1e) + (2,0.5)$) {};
            \node[vertex, label=above:{$1e1e\Z$}, label=right:{$\cdots$}] (1e1e) at ($(1e) + (2,-0.5)$) {};
            
            \node[vertex, label=above:{$0\ebar 1e0e0e\Z$}, label=right:{$\cdots$}] (0f1e0e0e) at ($(0f1e0e) + (2,0.5)$) {};
            \node[vertex, label=above:{$0\ebar 1e0e1e\Z$}, label=right:{$\cdots$}] (0f1e0e1e) at ($(0f1e0e) + (2,-0.5)$) {};
            \node[vertex, label=above:{$0\ebar 1e1e0e\Z$}, label=right:{$\cdots$}] (0f1e1e0e) at ($(0f1e1e) + (2,0.5)$) {};
            \node[vertex, label=above:{$0\ebar 1e1e1e\Z$}, label=right:{$\cdots$}] (0f1e1e1e) at ($(0f1e1e) + (2,-0.5)$) {};
            
            \draw[-stealth]
                (0e) edge (v)
                (1e) edge (v)
                
                (0e0e) edge (0e)
                (0e1e) edge (0e)
                
                (1e0e) edge (1e)
                (1e1e) edge (1e)
                
                (v) edge (0f)
                (0f1e) edge (0f)
                
                (0f1e0e) edge (0f1e)
                (0f1e1e) edge (0f1e)
                
                (0f1e0e0e) edge (0f1e0e)
                (0f1e0e1e) edge (0f1e0e)
                (0f1e1e0e) edge (0f1e1e)
                (0f1e1e1e) edge (0f1e1e);
        \end{tikzpicture}\]
        The fundamental group acts on the tree as follows: the element $a \in \pi_1(\GG, v)$ ``shifts" the tree along the topmost infinite path in the figure above: it sends $0\ebar \Z$ to $\Z$, $\Z$ to $0e\Z$, $0e\Z$ to $0e0e\Z$, and so on. The element $b \in \pi_1(\GG, v)$ fixes the vertex $\Z$ but swaps the edges it receives; more generally it fixes each ``level" of the tree (that is, each set of vertices vertically in line with each other), but permutes the vertices within each level.
    \end{enumerate}
    
\end{egs}

\subsection{Word category} \label{subsec:word category}

In this subsection we introduce and study the \textit{word category} of a directed graph of groups, which generalises the notion of the path category of a directed graph. The results in this subsection will be useful in Sections~\ref{sec:dgog algebra} and \ref{sec:C* Bass-Serre thm}.

\begin{defin}
    Let $\GG_+ = (\Gamma_+, G)$ be a directed graph of groups. A \textit{$\GG_+$-word} is a $\GG$-word where all edges in the word belong to $\Gamma_+^1$. A $\GG_+$-word maps naturally into the fundamental groupoid $F(\GG)$ of the underlying graph of groups. The \textit{word category} of $\GG_+$, which we will denote by $\Lambda_{\GG_+}$, is the subcategory of $F(\GG)$ consisting of the images of $\GG_+$-words.
\end{defin}

Note that the word category is indeed a category since the concatenation of two $\GG_+$-words is again a $\GG_+$-word. Moreover, since $\Lambda_{\GG_+}$ is a subcategory of a groupoid, it must be left cancellative.

\begin{remark}
    This construction of a category from a directed graph of groups yields the same result as the construction in \cite{levi_categories}. Thus, \cite[Theorem~4.17]{levi_categories} tells us that we can recover the directed graph of groups $\GG_+$ from its word category $\Lambda_{\GG_+}$ (up to isomorphism).
\end{remark}

Recall from Section~\ref{subsec:background gogs} that elements of the fundamental groupoid $F(\GG)$ of a graph of groups $\GG$ can be uniquely represented by $\Sigma$-normalised $\GG$-words. For a \textit{directed} graph of groups $\GG_+$, we can likewise define the notion of a $\Sigma$-normalised $\GG_+$-word, and we show that these uniquely represent elements of the word category $\Lambda_{\GG_+}$.

\begin{defin}
    Let $\GG_+ = (\Gamma_+, G)$ be a directed graph of groups. A \textit{set of transversals for $\GG_+$} is a set $\Sigma = \{\Sigma_e : e \in \Gamma_+^1\}$, where each $\Sigma_e$ is a transversal of $G_{r(e)}/\alpha_e(G_e)$ which includes the identity element $1_{r(e)} \in G_{r(e)}$. A $\GG_+$-word $g_1 e_1 g_2 e_2 \ldots g_n e_n g_{n+1}$ is \textit{$\Sigma$-normalised} if $g_i \in \Sigma_{e_i}$ for all $1 \leq i \leq n$.
\end{defin}

\begin{remark}
    Suppose that $\Sigma$ is a set of transversals for a directed graph of groups $\GG_+ = (\Gamma_+, G)$. We can always extend $\Sigma$ to a set of transversals for the underlying graph of groups $\GG$ by simply including transversals $\Sigma_e$ for each edge $e \in \Gamma^1 \setminus \Gamma_+^1$.
\end{remark}

\begin{prop} \label{prop:reduced representative GG+ word}
    Let $\GG_+ = (\Gamma_+, G)$ be a directed graph of groups, and let $\Sigma$ be a set of transversals for $\GG_+$. Every element of $\Lambda_{\GG_+}$ is uniquely represented by a $\Sigma$-normalised $\GG_+$-word.
\end{prop}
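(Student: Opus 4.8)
The plan is to reduce everything to the unique representation result for the underlying (undirected) graph of groups, Proposition~\ref{prop:F(G) Sigma-normalised rep}. First I would extend $\Sigma$ to a set of transversals $\widetilde{\Sigma}$ for the underlying graph of groups $\GG$, as in the remark preceding the statement, by choosing arbitrary transversals $\widetilde{\Sigma}_e$ for the reversed edges $e \in \Gamma^1 \setminus \Gamma_+^1$ and keeping $\widetilde{\Sigma}_e = \Sigma_e$ for $e \in \Gamma_+^1$. The key observation, which I would record at the outset, is that a $\Sigma$-normalised $\GG_+$-word is automatically a $\widetilde{\Sigma}$-normalised $\GG$-word: the condition $g_i \in \Sigma_{e_i} = \widetilde{\Sigma}_{e_i}$ is exactly what is required, and the ``no backtracking" clause of Definition~\ref{def:Sigma-normalised G-word} holds vacuously, since every edge of a $\GG_+$-word lies in $\Gamma_+^1$ and hence a word can never contain both $e$ and $\overline{e}$.

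For existence, I would take any $\GG_+$-word $w = g_1 e_1 \cdots g_n e_n g_{n+1}$ representing a given $\lambda \in \Lambda_{\GG_+}$ and normalise it by a left-to-right sweep. At the $i$-th stage I write $g_i = s_i \alpha_{e_i}(h_i)$ with $s_i \in \Sigma_{e_i}$ and $h_i \in G_{e_i}$ (possible since $\Sigma_{e_i}$ is a transversal of $G_{r(e_i)}/\alpha_{e_i}(G_{e_i})$), and then use the defining relation $\alpha_{e_i}(h_i) e_i = e_i \alpha_{\overline{e_i}}(h_i)$ of $F(\GG)$ to rewrite $g_i e_i = s_i e_i \alpha_{\overline{e_i}}(h_i)$, absorbing the factor $\alpha_{\overline{e_i}}(h_i) \in G_{s(e_i)} = G_{r(e_{i+1})}$ into the next group element $g_{i+1}$. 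Crucially this rewriting never alters the edge sequence $e_1 \cdots e_n$ and never introduces a reversed edge, so the output is again a $\GG_+$-word, now with each group entry lying in $\Sigma_{e_i}$; that is, a $\Sigma$-normalised $\GG_+$-word representing $\lambda$.

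For uniqueness, suppose two $\Sigma$-normalised $\GG_+$-words represent the same element of $\Lambda_{\GG_+}$. Viewing $\Lambda_{\GG_+}$ inside $F(\GG)$, they represent the same element of $F(\GG)$; by the opening observation they are both $\widetilde{\Sigma}$-normalised $\GG$-words, so Proposition~\ref{prop:F(G) Sigma-normalised rep} forces them to be equal. This completes the argument.

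I expect the only genuinely delicate point to be reconciling the two notions of ``$\Sigma$-normalised": the directed definition imposes only the transversal condition $g_i \in \Sigma_{e_i}$, whereas the undirected Definition~\ref{def:Sigma-normalised G-word} additionally forbids backtracking subsequences $\overline{e} 1_{r(e)} e$. The resolution is precisely that backtracking is impossible in a $\GG_+$-word, so on $\GG_+$-words the two conditions coincide and the directed statement becomes a restriction of the undirected one. Everything else is the routine verification that the normalisation sweep terminates in a $\GG_+$-word and that the relation $\alpha_{e_i}(h_i) e_i = e_i \alpha_{\overline{e_i}}(h_i)$ is being applied legitimately.
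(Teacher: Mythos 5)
Your proof is correct and takes essentially the same route as the paper: extend $\Sigma$ to a set of transversals for the underlying graph of groups $\GG$ and reduce to Proposition~\ref{prop:F(G) Sigma-normalised rep}, noting that the backtracking clause of Definition~\ref{def:Sigma-normalised G-word} is vacuous for $\GG_+$-words since the orientation contains exactly one of $\{e,\overline{e}\}$. Your explicit left-to-right sweep simply makes precise the step the paper asserts tersely (that the unique $\Sigma$-normalised $\GG$-word representing an element of $\Lambda_{\GG_+}$ contains only edges of $\Gamma_+^1$, because the coset rewriting $\alpha_{e_i}(h_i)e_i = e_i\alpha_{\overline{e_i}}(h_i)$ never alters the edge sequence), so nothing further is needed.
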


\begin{proof}
    Extend $\Sigma$ to a set of transversals for $\GG$. By Proposition~\ref{prop:F(G) Sigma-normalised rep}, every element of $\Lambda_{\GG_+} \subseteq F(\GG)$ is uniquely represented by a $\Sigma$-normalised $\GG$-word. But any such $\GG$-word must only contain edges in orientation $\Gamma_+^1$ (along with group elements), and so the $\GG$-word is a ($\Sigma$-normalised) $\GG_+$-word.
\end{proof}

Using Proposition~\ref{prop:reduced representative GG+ word}, we now relate the word category $\Lambda_{\GG_+}$ to a certain directed graph. We show that this graph encodes the order structure of $\Lambda_{\GG_+}$ (Proposition~\ref{prop:word category relation}), and then we derive some useful consequences.

\begin{notation}
    Let $\GG_+ = (\Gamma_+, G)$ be a directed graph of groups, and let $\Sigma$ be a set of transversals for $\GG_+$. Write $E_\Sigma$ for the directed graph defined by
    \[E_\Sigma^0 = \Gamma^0, \quad E_\Sigma^1 = \{he : e \in \Gamma_+^1,\ h \in \Sigma_e\}, \quad r(he) = r(e), \quad s(he) = s(e).\]
    (The directed graph $E_\Sigma$ is essentially just the directed graph $\Gamma_+$ with its edges replicated a number of times depending on the size of the respective transversal $\Sigma_e$.)
    
    Write $q_\Sigma \colon \Lambda_{\GG_+} \to E_\Sigma^*$ for the map which sends (the image in $\Lambda_{\GG_+}$ of) a $\Sigma$-normalised $\GG_+$-word $h_1 e_1 h_2 e_2 \dots h_n e_n g_{n+1}$ to the path $h_1 e_1 h_2 e_2 \dots h_n e_n$ in $E_\Sigma$ (in the case where $n=0$, we define $q_\Sigma(g_1)$ to be the vertex $v$ for which $g_1 \in G_v$). We will often drop the subscript `$\Sigma$' in the notation and write only `$q$' for this map when $\Sigma$ is understood.
\end{notation}

Note that the path space $E_\Sigma^*$ can be viewed as a left cancellative small category with unit space $E_\Sigma^0 = \Gamma^0$. The relation $\leq$ on $E_\Sigma^*$ as in Definition~\ref{def:LCSC order} is in this case a partial order.

\begin{prop} \label{prop:word category relation}
    Let $\GG_+ = (\Gamma_+, G)$ be a directed graph of groups, and let $\Sigma$ be a set of transversals for $\GG_+$. For any $\lambda, \mu \in \Lambda_{\GG_+}$, we have $\lambda \leq \mu$ in $\Lambda_{\GG_+}$ if and only if $q(\lambda) \leq q(\mu)$ in $E_\Sigma^*$.
\end{prop}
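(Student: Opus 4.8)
The plan is to prove both implications by working with the unique $\Sigma$-normalised representatives guaranteed by Proposition~\ref{prop:reduced representative GG+ word}, and by tracking carefully what the normalisation procedure does to a concatenation of $\GG_+$-words. Throughout I would write $\lambda = h_1 e_1 \cdots h_m e_m g_{m+1}$ and $\mu = k_1 f_1 \cdots k_n f_n g'_{n+1}$ for the $\Sigma$-normalised representatives, so that $q(\lambda) = h_1 e_1 \cdots h_m e_m$ and $q(\mu) = k_1 f_1 \cdots k_n f_n$ as paths in $E_\Sigma$.

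The technical heart is a stability property of normalisation, which I would isolate first: if $\lambda$ is $\Sigma$-normalised and $\nu \in \Lambda_{\GG_+}$ satisfies $s(\lambda) = r(\nu)$, then the $\Sigma$-normalised representative of the product $\lambda\nu$ begins with $h_1 e_1 \cdots h_m e_m$; consequently $q(\lambda\nu) = q(\lambda)\,p$ for some path $p$ in $E_\Sigma$. To see this, recall that normalising a $\GG$-word proceeds left to right, repeatedly writing the accumulated group element $g$ sitting in front of an edge $e$ as $g = h\,\alpha_e(c)$ with $h \in \Sigma_e$, extracting $h$, and pushing the ``carry'' $\alpha_{\ebar}(c)$ across $e$ via relation (ii) of the fundamental groupoid. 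Two features make this clean here. First, since every edge of a $\GG_+$-word lies in $\Gamma_+^1$, the reversal $\ebar$ never appears, so no backtracking subword $\ebar\,1\,e$ is ever created or cancelled and the edge sequence is preserved verbatim. Second, because each $h_i$ already lies in the transversal $\Sigma_{e_i}$, it is its own coset representative, so the carry it produces is $\alpha_{\ebar_i}(1) = 1$; hence normalising $\lambda\nu = h_1 e_1 \cdots h_m e_m (g_{m+1} l_1) d_1 \cdots$ leaves the leading block $h_1 e_1 \cdots h_m e_m$ untouched and feeds a trivial carry into the remaining tail. This gives the claim, and with it the forward implication is immediate: if $\lambda \le \mu$ then $\mu = \lambda\nu$ for some $\nu \in \Lambda_{\GG_+}$, so $q(\mu) = q(\lambda\nu) = q(\lambda)\,p$, whence $q(\lambda) \le q(\mu)$ in $E_\Sigma^*$.

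For the reverse implication, suppose $q(\lambda) \le q(\mu)$, i.e.\ $q(\lambda)$ is an initial segment of $q(\mu)$. Since the edges of $E_\Sigma$ are formal symbols $he$ with the pair $(h,e)$ determining the edge, this forces $m \le n$ together with $h_i = k_i$ and $e_i = f_i$ for $1 \le i \le m$. I would then exhibit an explicit factor $\nu \in \Lambda_{\GG_+}$, namely
\[ \nu = (g_{m+1}^{-1} k_{m+1})\, f_{m+1}\, k_{m+2}\, f_{m+2} \cdots k_n f_n\, g'_{n+1}. \]
The identity $s(e_m) = s(f_m) = r(f_{m+1})$ shows that $g_{m+1}^{-1} k_{m+1} \in G_{r(f_{m+1})}$, so $\nu$ is a genuine $\GG_+$-word with $r(\nu) = r(f_{m+1}) = s(\lambda)$ (the cases $m = n$ and $m = 0$ degenerate harmlessly, giving $\nu = g_{m+1}^{-1} g'_{n+1}$ and $\nu = g_1^{-1}\mu$ respectively). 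A direct computation in $F(\GG)$ then collapses the inserted $g_{m+1} g_{m+1}^{-1}$ and yields $\lambda\nu = \mu$, so $\lambda \le \mu$.

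I expect the main obstacle to be the stability property, and in particular the rigorous verification that the leading $\Sigma$-normalised block survives the normalisation of the concatenation. The two points to nail down are the absence of any backtracking cancellation (which relies precisely on the positivity of the orientation, so that no $\ebar$ ever enters the word) and the vanishing of the carry produced by a transversal element. Everything after that is bookkeeping.
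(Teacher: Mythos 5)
Your proposal is correct and takes essentially the same approach as the paper: the forward direction rests on the fact that appending the $\Sigma$-normalised form of $g_{m+1}\nu$ to the leading block $h_1 e_1 \cdots h_m e_m$ gives the $\Sigma$-normalised form of $\lambda\nu$, and the reverse direction exhibits exactly the same explicit factor $\nu = (g_{m+1}^{-1}k_{m+1})f_{m+1}\cdots k_n f_n g'_{n+1}$. The only difference is presentational: where you trace the left-to-right normalisation algorithm and verify that the carries vanish, the paper simply notes that the concatenated word is already $\Sigma$-normalised by definition (each group element lies in its transversal, and no backtracking condition can arise for $\GG_+$-words) and invokes the uniqueness from Proposition~\ref{prop:reduced representative GG+ word}.
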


\begin{proof}
    Fix $\lambda, \mu \in \Lambda_{\GG_+}$, and write $\lambda = h_1 e_1 h_2 e_2 \dots h_n e_n g_{n+1}$ in $\Sigma$-normalised form. First suppose that $\lambda \leq \mu$, so $\lambda\nu = \mu$ for some $\nu \in \Lambda_{\GG_+}$. Write $\nu = g'_1 f_1 g'_2 f_2 \dots g'_m f_m g'_{m+1}$, and let $g_{n+1}\nu = h'_1 f_1 h'_2 f_2 \dots h'_m f_m g''_{m+1}$ be $\Sigma$-normalised. Then 
    \[ \mu = \lambda\nu = h_1 e_1 h_2 e_2 \dots h_n e_n h'_1 f_1 h'_2 f_2 \dots h'_m f_m g''_{m+1} \]
    is in $\Sigma$-normalised form, so
    \[q(\lambda) = h_1 e_1 h_2 e_2 \dots h_n e_n \leq h_1 e_1 h_2 e_2 \dots h_n e_n h'_1 f_1 h'_2 f_2 \dots h'_m f_m = q(\mu),\]
    as required.
    
    For the other direction, suppose that $q(\lambda) \leq q(\mu)$, so $\mu$ has $\Sigma$-normalised form
    \[ h_1 e_1 h_2 e_2 \dots h_n e_n h'_1 f_1 h'_2 f_2 \dots h'_m f_m g''_{m+1}. \]
    So $\mu = \lambda (g_{n+1}^{-1} h'_1 f_1 h'_2 f_2 \dots h'_m f_m g''_{m+1})$, showing that $\lambda \leq \mu$, as required.
\end{proof}

\begin{coro} \label{coro:word category extensions}
    Let $\GG_+ = (\Gamma_+, G)$ be a directed graph of groups. For any $\lambda, \mu \in \Lambda_{\GG_+}$, if $\lambda$ and $\mu$ have a common extension, then either $\lambda \leq \mu$ or $\mu \leq \lambda$.
\end{coro}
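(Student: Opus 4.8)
The plan is to transport the statement into the path category $E_\Sigma^*$ by means of Proposition~\ref{prop:word category relation}, where the order $\leq$ is nothing but the prefix order on paths and hence is tree-like. First I would fix a set of transversals $\Sigma$ for $\GG_+$ (one exists by choosing coset representatives that include the relevant identity elements), so that the map $q = q_\Sigma \colon \Lambda_{\GG_+} \to E_\Sigma^*$ and both directions of Proposition~\ref{prop:word category relation} become available.

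Suppose then that $\lambda, \mu \in \Lambda_{\GG_+}$ admit a common extension $\gamma$, so that $\lambda \leq \gamma$ and $\mu \leq \gamma$ by Definition~\ref{def:LCSC order}. Applying the forward implication of Proposition~\ref{prop:word category relation} to each of these gives $q(\lambda) \leq q(\gamma)$ and $q(\mu) \leq q(\gamma)$ in $E_\Sigma^*$, where (as noted after the definition of $q$) $\leq$ is a genuine partial order, namely the prefix order on paths.

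The crux is the elementary observation that in the path category $E_\Sigma^*$ any two paths that are both prefixes of a common path are comparable. Writing $q(\lambda) = a_1 \cdots a_k$, $q(\mu) = b_1 \cdots b_\ell$ and $q(\gamma) = c_1 \cdots c_m$, the relations $q(\lambda) \leq q(\gamma)$ and $q(\mu) \leq q(\gamma)$ mean precisely that $a_i = c_i$ for $i \leq k$ and $b_j = c_j$ for $j \leq \ell$. Taking without loss of generality $k \leq \ell$, we read off $a_i = c_i = b_i$ for all $i \leq k$, so $q(\lambda) = a_1 \cdots a_k$ is a prefix of $q(\mu)$, i.e. $q(\lambda) \leq q(\mu)$ in $E_\Sigma^*$. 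I would then invoke the reverse implication of Proposition~\ref{prop:word category relation} to conclude $\lambda \leq \mu$; in the symmetric case $\ell \leq k$ one obtains $\mu \leq \lambda$ instead.

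I do not expect a genuine obstacle here: essentially all of the work has already been done in Proposition~\ref{prop:word category relation}, and what remains is the standard fact that prefixes of a fixed path in a path category form a chain. The only point requiring a little care is that $\leq$ on $E_\Sigma^*$ genuinely is a partial order (rather than merely a preorder), which is what makes the edge-by-edge reading-off argument unambiguous and legitimizes passing back through $q$.
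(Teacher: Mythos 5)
Your proof is correct and follows essentially the same route as the paper: transport the relation through $q$ via Proposition~\ref{prop:word category relation}, observe that two initial subpaths of a common path in $E_\Sigma^*$ must be comparable, and transport back. The only difference is that you spell out the prefix-comparability edge by edge, which the paper treats as immediate.
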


\begin{proof}
    Suppose that $\lambda$ and $\mu$ have a common extension $\nu$, so $\lambda, \mu \leq \nu$. Then by Proposition~\ref{prop:word category relation}, we have that $q(\lambda), q(\mu) \leq q(\nu)$ in $E_\Sigma^*$; that is, both $q(\lambda)$ and $q(\mu)$ are initial subpaths of $q(\nu)$. But this can only be true if either $q(\lambda)$ is an initial subpath of $q(\mu)$ or vice versa. Then Proposition~\ref{prop:word category relation} implies the claim.
\end{proof}

\begin{coro} \label{coro:Lambda singly aligned}
    Let $\GG_+ = (\Gamma_+, G)$ be a directed graph of groups. The category $\Lambda_{\GG_+}$ is singly aligned.
\end{coro}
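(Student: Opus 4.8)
The plan is to deduce single alignment directly from Corollary~\ref{coro:word category extensions}, which has already established the crucial dichotomy: any two elements of $\Lambda_{\GG_+}$ that admit a common extension are comparable under $\leq$. Fix $\lambda, \mu \in \Lambda_{\GG_+}$ with $r(\lambda) = r(\mu)$ (otherwise $\lambda\Lambda_{\GG_+} \cap \mu\Lambda_{\GG_+} = \emptyset$ trivially). The first step is to observe that the intersection $\lambda\Lambda_{\GG_+} \cap \mu\Lambda_{\GG_+}$ is exactly the set of common extensions of $\lambda$ and $\mu$: indeed, by Definition~\ref{def:LCSC order}, $\nu \in \lambda\Lambda_{\GG_+}$ means precisely $\lambda \leq \nu$, so $\nu$ lies in the intersection if and only if $\lambda \leq \nu$ and $\mu \leq \nu$.

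With this bookkeeping in place, I would split into two cases. If $\lambda$ and $\mu$ have no common extension, then the intersection is empty and there is nothing to prove. Otherwise, Corollary~\ref{coro:word category extensions} forces $\lambda \leq \mu$ or $\mu \leq \lambda$; say $\lambda \leq \mu$ (the other case is symmetric). Then $\mu = \lambda\delta$ for some $\delta \in \Lambda_{\GG_+}$, so $\mu\Lambda_{\GG_+} = \lambda\delta\Lambda_{\GG_+} \subseteq \lambda\Lambda_{\GG_+}$, and hence $\lambda\Lambda_{\GG_+} \cap \mu\Lambda_{\GG_+} = \mu\Lambda_{\GG_+}$. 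This is of the required form $\gamma\Lambda_{\GG_+}$ with $\gamma = \mu$, so $\lambda \vee \mu$ may be taken to be the singleton $\{\mu\}$, verifying single alignment.

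The argument is essentially immediate once the corollary is available, so I do not expect a genuine obstacle here; the only points requiring care are the (routine) identification of $\lambda\Lambda_{\GG_+} \cap \mu\Lambda_{\GG_+}$ with the set of common extensions, and the verification that $\lambda \leq \mu$ yields the inclusion $\mu\Lambda_{\GG_+} \subseteq \lambda\Lambda_{\GG_+}$. All of the real content sits in the earlier results: Proposition~\ref{prop:word category relation}, which transports the order structure of $\Lambda_{\GG_+}$ to the path category $E_\Sigma^*$, and Corollary~\ref{coro:word category extensions}, whose proof exploits that initial subpaths of a fixed path in a directed graph are totally ordered.
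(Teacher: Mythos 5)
Your proof is correct and takes essentially the same route as the paper's: both invoke Corollary~\ref{coro:word category extensions} to reduce a nonempty intersection to the comparability dichotomy, assume without loss of generality that $\lambda \leq \mu$, and conclude $\lambda\Lambda_{\GG_+} \cap \mu\Lambda_{\GG_+} = \mu\Lambda_{\GG_+}$. One small caveat on your closing parenthetical: ``$\lambda \vee \mu$ may be taken to be the singleton $\{\mu\}$'' misuses the paper's notation, since $\lambda \vee \mu$ is by definition the set of \emph{all} minimal common extensions, which here is $\{\mu g : g \in G_{s(\mu)}\}$ rather than $\{\mu\}$ (see the proof of Proposition~\ref{prop:S3'}); single alignment only requires the intersection to equal $\gamma\Lambda_{\GG_+}$ for a single $\gamma$, which your argument has already established.
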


\begin{proof}
    Suppose that $\lambda, \mu \in \Lambda_{\GG_+}$ are such that $\lambda \Lambda_{\GG_+} \cap \mu \Lambda_{\GG_+} \neq \emptyset$. Then $\lambda$ and $\mu$ have a common extension, and so by Corollary~\ref{coro:word category extensions} we can assume without loss of generality that $\lambda \leq \mu$. This implies that $\mu \Lambda_{\GG_+} \subseteq \lambda \Lambda_{\GG_+}$ and therefore that $\lambda \Lambda_{\GG_+} \cap \mu \Lambda_{\GG_+} = \mu \Lambda_{\GG_+}$. This proves the claim.
\end{proof}

\subsection{The directed forest $W_{\GG_+}$} \label{subsec:F(G) boundary action}

In this subsection we consider a directed version of the forest $W_\GG$ from Definition~\ref{def:W_G}, and we study the action of the fundamental groupoid $F(\GG)$ on the boundary of this directed forest. This will be useful for Section~\ref{sec:C* Bass-Serre thm}, where we work with the transformation groupoid $F(\GG) \ltimes \partial W_{\GG_+}$. We start by assigning an orientation to $W_\GG$.

\begin{defin}
    Let $\GG_+ = (\Gamma_+, G)$ be a directed graph of groups. Write $W_{\GG_+}$ for the directed forest with $W_\GG$ as its underlying graph, and where an edge from $\gamma' G_w$ to $\gamma G_w$ is in the orientation if and only if $\gamma^{-1} \gamma' \in G_v e G_w$ for some $e \in v\Gamma_+^1 w$.
\end{defin}

Note that for any $x \in \Gamma^0$, the connected component of the vertex $G_x$ in $W_{\GG_+}$ is exactly the directed Bass--Serre tree of $\GG_+$ based at $x$.

We are interested in studying the boundary $\partial W_{\GG_+}$ of $W_{\GG_+}$, and for this it is useful to consider \textit{infinite $\GG$-words}.

\begin{defin}
    Let $\GG = (\Gamma, G)$ be a graph of groups, and let $\Sigma$ be a set of transversals for $\GG$. An \textit{infinite $\GG$-word} is a sequence of the form $g_1 e_1 g_2 e_2 \dots$, where $e_1 e_2 \dots$ is an infinite path in $\Gamma$, and $g_i \in G_{r(e_i)}$ for each $i \geq 1$. An infinite $\GG$-word is \textit{$\Sigma$-normalised} if $g_i \in \Sigma_{e_i}$ for all $i \geq 1$, and there is no subsequence of the form $\overline{e} 1_{r(e)} e$. The range of an infinite $\GG$-word is $r(e_1)$.
\end{defin}

The boundary $\partial X_{\GG, x}$ of the Bass--Serre tree based at $x \in \Gamma^0$ can be identified with the set of $\Sigma$-normalised infinite $\GG$-words with range $x$ (see \cite[Section~2.3.1]{BMPST}), so the boundary $\partial W_\GG$ can be identified with the set of all $\Sigma$-normalised infinite $\GG$-words. The action of $F(\GG)$ on $\partial W_\GG$ induced from the action on $W_\GG$ is as follows. Take $\gamma \in F(\GG)$ and $\xi \in \partial W_\GG$ with $r(\xi) = s(\gamma)$. Then $\gamma$ is represented by some $\GG$-word and $\xi$ is represented by some infinite $\GG$-word, so the concatenation $\gamma\xi$ is also an infinite $\GG$-word. By a (possibly infinite) number of applications of the relations in $F(\GG)$, we can turn $\gamma\xi$ into a $\Sigma$-normalised infinite $\GG$-word, which we call $\gamma \cdot \xi$. It can be verified that this defines an action of $F(\GG)$ on $\partial W_\GG$; we refer to \cite[Section~2]{BMPST} for more detail.

In this paper we are interested in the action of $F(\GG)$ on the boundary $\partial W_{\GG_+}$ of the \textit{directed} forest $W_{\GG_+}$. This boundary can be identified with the set of \textit{eventually directed} $\Sigma$-normalised infinite $\GG$-words, in the following sense.

\begin{defin}
    Let $\GG = (\Gamma_+, G)$ be a directed graph of groups, and let $\Sigma$ be a set of transversals for $\GG$. A $\Sigma$-normalised infinite $\GG$-word $g_1 e_1 g_2 e_2 \dots$ is \textit{eventually directed} if there is some $N \in \N$ such that $e_i \in \Gamma_+^1$ for all $i \geq N$. Equivalently, an eventually directed infinite $\GG$-word has the form $\gamma \alpha$ for some $\gamma \in F(\GG)$ and some \textit{infinite $\GG_+$-word} $\alpha$ (that is, $\alpha$ is an infinite $\GG$-word with all edges belonging to the orientation $\Gamma_+^1$).
\end{defin}

The basic open sets of $\partial W_{\GG_+}$ have the form
\[ Z(\gamma G_{s(\gamma)}) = \{\gamma \alpha : \alpha\ \text{an infinite $\GG_+$-word},\ r(\alpha) = s(\gamma)\} \]
for a fixed $\gamma \in F(\GG)$. Note that if we consider $\partial W_{\GG_+}$ to be a subset of $\partial W_\GG$, then it is invariant under the action of $F(\GG)$, since the concatenation $\gamma\xi$ of a $\GG$-word $\gamma$ and an eventually directed $\Sigma$-normalised infinite $\GG$-word $\xi$ will also be eventually directed. Moreover, the action of $\gamma$ takes a basic open set $Z(\gamma' G_{s(\gamma')})$, $\gamma' \in s(\gamma)F(\GG)$ to the basic open set $Z(\gamma \gamma' G_{s(\gamma')})$ Thus $F(\GG)$ acts on the space $\partial W_{\GG_+}$ by homeomorphisms.

We now focus on the subspace of $\partial W_{\GG_+}$ corresponding to ($\Sigma$-normalised) infinite $\GG_+$-words. These words have the form $h_1 e_1 h_2 e_2 \dots$, where for each $i \leq 1$ we have $e_i \in \Gamma_+^1$ and $h_i \in \Sigma_{e_i}$. But sequences of this form also describe infinite paths in $E_\Sigma^\infty$, and so we can identify $E_\Sigma^\infty$ with a subset of $\partial W_{\GG_+}$. Moreover, the topology of $E_\Sigma^\infty$ matches the topology coming from $\partial W_{\GG_+}$, since the basic open set $\alpha E_\Sigma^\infty \subseteq E_\Sigma^\infty$ corresponds to the basic open set $Z(\alpha 1_{s(\alpha)} G_{s(\alpha)}) \subseteq \partial W_{\GG_+}$; thus we can consider $E_\Sigma^\infty$ to be a subspace of $\partial W_{\GG_+}$. We end this section with the following result which characterises the groupoid elements $\gamma \in F(\GG)$ which send a fixed element of $E_\Sigma^\infty \subseteq \partial W_{\GG_+}$ to another element of $E_\Sigma^\infty$.

\begin{prop} \label{prop:F(G) acting on E_Sigma^infty}
    Let $\GG_+ = (\Gamma_+, G)$ be a directed graph of groups, and let $\Sigma$ be a set of transversals for $\GG_+$. Let $\alpha \in E_\Sigma^\infty$ be an infinite path, considered as an element of $\partial W_\GG$, and let $\gamma \in F(\GG)$. Then $\gamma \cdot \alpha$ is an element of $E_\Sigma^\infty \subseteq \partial W_\GG$ if and only if $\gamma$ has the form $\lambda \mu^{-1}$ for some $\lambda, \mu \in \Lambda_{\GG_+}$, where $q(\mu) \leq \alpha$.
\end{prop}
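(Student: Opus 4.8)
The plan is to exploit the concrete description of the action of $F(\GG)$ on $\partial W_\GG$ as ``concatenate, then $\Sigma$-normalise'', together with the prefix structure of the forward infinite word $\alpha$. The decomposition $\gamma = \lambda\mu^{-1}$ should be read as: $\mu$ is the initial segment of $\alpha$ that gets cancelled by the action, and $\lambda$ is the forward word prepended in its place.

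For the ``if'' direction, suppose $\gamma = \lambda\mu^{-1}$ with $\lambda,\mu\in\Lambda_{\GG_+}$ and $q(\mu)\le\alpha$. Writing $\mu$ in $\Sigma$-normalised form $h_1 e_1\cdots h_k e_k g_{k+1}$, the hypothesis $q(\mu) = h_1 e_1\cdots h_k e_k \le \alpha$ forces $\alpha = h_1 e_1\cdots h_k e_k h_{k+1}e_{k+1}\cdots$. Concatenating $\mu^{-1}\alpha$ and successively cancelling the backtracking $\overline{e_i}\,1\,e_i$ shows that $\mu^{-1}\cdot\alpha$ is the $\Sigma$-normalised form of $g_{k+1}^{-1}h_{k+1}e_{k+1}\cdots$, an infinite $\GG_+$-word, and so lies in $E_\Sigma^\infty$. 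Since $\lambda$ is a forward $\GG_+$-word with $s(\lambda)=s(\mu)$, the concatenation $\lambda(\mu^{-1}\cdot\alpha)$ is again an infinite $\GG_+$-word, whence $\gamma\cdot\alpha = \lambda\cdot(\mu^{-1}\cdot\alpha)\in E_\Sigma^\infty$. (One checks $s(\gamma)=r(\mu)=r(\alpha)$, so the action is defined.)

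For the ``only if'' direction, write $\beta := \gamma\cdot\alpha\in E_\Sigma^\infty$ and $\alpha = h_1 e_1 h_2 e_2\cdots$. For each $n$ set $\mu_n := h_1 e_1\cdots h_n e_n 1_{s(e_n)}\in\Lambda_{\GG_+}$, a prefix of $\alpha$ with $q(\mu_n)\le\alpha$, and let $\sigma_n := \mu_n^{-1}\cdot\alpha$ be the forward tail computed as in the ``if'' direction. Putting $\nu_n := \gamma\mu_n\in F(\GG)$ (a finite product), we have $\beta = \gamma\cdot\alpha = \nu_n\cdot\sigma_n$. The key point is that, $\gamma$ being a fixed finite word, at most $\abs{\gamma}$ edges of $\mu_n$ can cancel against $\gamma$; hence for $n$ large the $\Sigma$-normalised form of $\nu_n$ terminates in the surviving forward edges of $\mu_n$, and no backtracking occurs at the junction with the forward word $\sigma_n$. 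Then $\beta = \nu_n\sigma_n$ is already $\Sigma$-normalised, so by uniqueness of normal forms (Proposition~\ref{prop:reduced representative GG+ word}) the edges of $\nu_n$ are precisely the initial edges of $\beta$; as $\beta\in E_\Sigma^\infty$ these all lie in $\Gamma_+^1$, whence $\nu_n=\gamma\mu_n$ is a $\GG_+$-word. Taking $\lambda := \nu_n$ and $\mu := \mu_n$ yields $\gamma = \lambda\mu^{-1}$ with $\lambda,\mu\in\Lambda_{\GG_+}$ and $q(\mu)\le\alpha$.

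The main obstacle is the stabilisation claim: I must argue carefully that the $\Sigma$-normalisation of the infinite concatenation $\gamma\alpha$ alters only a bounded initial segment, so that for all large $n$ the finite product $\nu_n$ and the tail $\sigma_n$ fit together with no further cancellation. This amounts to controlling how the relations $\alpha_e(g)e = e\,\alpha_{\overline e}(g)$ and $\overline e = e^{-1}$ propagate through the reduction: each cancellation strictly decreases the number of backward edges, and $\gamma$ contributes only finitely many such edges, so the process terminates within a prefix whose length is bounded in terms of $\abs{\gamma}$. I expect to formalise this either by induction on $\abs{\gamma}$ or by directly tracking the reduced length of $\nu_n$ (which grows like $2n$ minus a quantity bounded by $\abs{\gamma}$); once that bound is established, the identification of the edges of $\beta$ with those of $\nu_n$ followed by $\sigma_n$ is immediate.
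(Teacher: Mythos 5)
Your proposal is correct, and its core is the same as the paper's, though the ``only if'' direction is packaged differently enough to be worth comparing. Your ``if'' direction coincides with the paper's (write $\mu = q(\mu)g_\mu$ and $\alpha = q(\mu)\beta$, so that $\gamma\cdot\alpha = \lambda g_\mu^{-1}\beta$ is an infinite $\GG_+$-word). For ``only if'', the paper works step-by-step: it extends $\Sigma$ to a set of transversals for $\GG$, writes $\gamma$ in $\Sigma$-normalised form, and uses directedness of $\gamma\cdot\alpha$ to force the sourcemost backward edge of $\gamma$ to cancel against the rangemost edge of $\alpha$, iterating until $\gamma = \gamma''\,(\text{cancelled prefix})^{-1}$ with $\gamma''\in\Lambda_{\GG_+}$; this produces the minimal decomposition. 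You instead multiply by an en-bloc prefix $\mu_n$ for $n$ large and appeal to uniqueness of normal forms to identify the edges of $\nu_n = \gamma\mu_n$ with the initial edges of $\beta\in E_\Sigma^\infty$. Your flagged stabilisation claim does close, and for exactly the reason you sketch, which is the paper's termination argument in disguise: since $\gamma$ is in normalised form and $\mu_n$, $\sigma_n$ are forward words, cancellations can occur only at the $\gamma$--$\mu_n$ junction, and each one consumes a backward edge of $\gamma$, of which there are at most $\abs{\gamma}$; for $n > \abs{\gamma}$ the normal form of $\nu_n$ then ends in a surviving forward edge $e_n$ of $\mu_n$, and no backtracking can occur against the forward edge $e_{n+1}$ heading $\sigma_n$ because $\overline{e_{n+1}}\notin\Gamma_+^1$. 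Two small points to tidy when formalising: as in the paper, you should extend $\Sigma$ to transversals for all of $\GG$ before speaking of the $\Sigma$-normalised form of $\nu_n$; and $\nu_n\sigma_n$ need not be literally $\Sigma$-normalised at the junction (the terminal group element of $\nu_n$ times the leading transversal element of $\sigma_n$ may require rewriting via $\alpha_e(g)e = e\alpha_{\overline{e}}(g)$ and pushing rightward), but such rewriting never alters the edge sequence, and since that edge sequence is reduced, uniqueness of normalised representatives still yields the identification of edges you need. Your argument produces a longer $\mu = \mu_n$ than the paper's minimal one, which is harmless since the proposition only asks for some $\mu$ with $q(\mu)\le\alpha$; what the paper's greedy peeling buys is the exact decomposition with no ``large $n$'' bookkeeping, while your version isolates the bounded-cancellation phenomenon as a single clean claim.
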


\begin{proof}
    First suppose that $\gamma = \lambda\mu^{-1}$ for some $\lambda, \mu \in \Lambda_{\GG_+}$ with $q(\mu) \leq \alpha$. Let $g_\mu \in G_{s(\mu)}$ be such that $\mu = q(\mu)g_\mu$, and let $\beta \in E_\Sigma^\infty$ be such that $\alpha = q(\mu)\beta$. Then $\gamma = \lambda g_\mu^{-1} q(\mu)^{-1}$, and so $\gamma \cdot \alpha = \lambda g_\mu^{-1} \beta$ is an infinite $\GG_+$-word, and therefore (its $\Sigma$-normalised version) corresponds to some infinite path in $E_\Sigma$. This proves the backwards direction.
    
    For the forwards direction, extend $\Sigma$ to a set of transversals for $\GG$, and write $\gamma = g_1 e_1 \dots g_n e_n g_{n+1}$ in $\Sigma$-normalised form. If each $e_i \in \Gamma_+^1$, $i=1,\dots,n$, then $\gamma \in \Lambda_{\GG_+}$ and there is nothing to prove, so assume that $e_i \notin \Gamma_+^1$ for some $i$. Write $\alpha = h_1 f_1 h_2 f_2 \dots$. Since $\gamma \cdot \alpha = g_1 e_1 \dots g_n e_n g_{n+1} h_1 f_1 h_2 f_2 \dots$ is directed, the path $e_i$ must be ``canceled out" by the relation $\overline{e}1_{r(e)}e = 1_{s(e)}$ in $F(\GG)$. But since $\gamma$ is already $\Sigma$-normalised, the only possible cancellation that can occur is the sourcemost edge of $\gamma$ with the rangemost edge of $\alpha$; that is, we need $e_n g_{n+1} h_1 f_1$ to have the form $\overline{f_1}1_{r(f_1)}f_1$. This forces $g_{n+1} = h_1^{-1}$ and $e_n = \overline{f_1}$. Write $\gamma' = g_1 e_1 \dots g_{n-1} e_{n-1} g_n$ and $\alpha' = h_2 f_2 h_3 f_3 \dots$, so $\gamma' \cdot \alpha' = \gamma \cdot \alpha$. If each $e_i \in \Gamma_+^1$, $i = 1, \dots, n-1$, then we have that $\gamma' \in \Lambda_{\GG_+}$ and $\gamma = \gamma'(h_1 f_1 1_{s(f_1)})^{-1}$, so we are done. If not, then we repeat the argument with $\gamma'$ and $\alpha'$. As $\gamma$ is a finite $\GG$-word, this process will eventually terminate, at which point we will have that for some $k \leq n$, the word $\gamma'' = g_1 e_1 \dots g_k e_k g_{k+1}$ belongs to $\Lambda_{\GG_+}$, and $\gamma = \gamma''(h_1 f_1 \dots h_{n-k} f_{n-k} 1_{s(f_{n-k})})^{-1}$. Since $q(h_1 f_1 \dots h_{n-k} f_{n-k} 1_{s(f_{n-k})}) = h_1 f_1 \dots h_{n-k} f_{n-k} \leq \alpha$, we have proven the claim.
\end{proof}

\section{Directed graph of groups $C^*$-algebras} \label{sec:dgog algebra}

We now associate a universal $C^*$-algebra to a directed graph of groups, where the generators and relations of the $C^*$-algebra are encoded by the directed graph of groups. Then, we show that this $C^*$-algebra is isomorphic to the Cuntz--Krieger algebra of the word category of the directed graph of groups.

\begin{defin} \label{def:dgog algebra}
    Let $\GG_+ = (\Gamma_+, G)$ be a countable, row-finite directed graph of groups with no sources, and let $\Sigma$ be a set of transversals for $\GG_+$. A \textit{$(\GG_+,\Sigma)$-family} is a collection of partial isometries $S_e$ for each $e \in \Gamma^1_+$ and representations $g \mapsto U_{v,g}$ of $G_v$ by partial unitaries for each $v \in \Gamma^0$ satisfying the relations:
    \begin{enumerate}[(1)]
        \item $U_{v,1} U_{w,1} = 0$ for each $v, w \in \Gamma^0$ with $v \neq w$;
        \item $U_{r(e),\alpha_e(g)} S_e = S_e U_{s(e), \alpha_{\overline{e}}(g)}$ for each $e \in \Gamma^1_+$ and $g \in G_e$;
        \item $U_{s(e),1} = S_e^* S_e$ for each $e \in \Gamma^1_+$; and
        \item $U_{v,1} = \sum_{e \in v\Gamma^1_+, h \in \Sigma_e} U_{v,h}S_e S_e^* U_{v,h}^*$ for each $v \in \Gamma^0$.
    \end{enumerate}
    We write $C^*(\GG_+)$ for the universal $C^*$-algebra generated by a $(\GG_+,\Sigma)$-family, and we call it the \textit{directed graph of groups $C^*$-algebra of $\GG_+$}. We write $\{s_e : e \in \Gamma_+^1\} \cup \{u_{v,g} : v \in \Gamma^0,\ g \in G_v\}$ for the $(\GG_+, \Sigma)$-family generating $C^*(\GG_+)$.
\end{defin}

\textit{A priori}, the $C^*$-algebra $C^*(\GG_+)$ depends on the choice of the set of transversals $\Sigma$. However, it follows from Theorem~\ref{thm:dgogC* isomorphism} below that $C^*(\GG_+)$ is in fact independent (up to isomorphism) of $\Sigma$, which justifies the omission of $\Sigma$ in the notation.

\begin{thm} \label{thm:dgogC* isomorphism}
     Let $\GG_+ = (\Gamma_+, G)$ be a countable, row-finite directed graph of groups with no sources, and let $\Sigma$ be a set of transversals for $\GG_+$. Let $\{t_\lambda : \lambda \in \Lambda_{\GG_+}\}$ be the Cuntz--Krieger $\Lambda_{\GG_+}$-family generating $\OO(\Lambda_{\GG_+})$. The map
    \begin{align*}
        \Phi\colon C^*(\GG_+) &\to \OO(\Lambda_{\GG_+}) \\
        s_e &\mapsto t_{1_{r(e)}e} \\
        u_{v,g} &\mapsto t_g
    \end{align*}
    is an isomorphism of $C^*$-algebras.
\end{thm}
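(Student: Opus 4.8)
The plan is to construct homomorphisms in both directions using the universal properties of $C^*(\GG_+)$ and $\OO(\Lambda_{\GG_+})$, and then to verify that they are mutually inverse by checking on generators. For the forward map $\Phi$, I would first check that setting $S_e := t_{1_{r(e)}e}$ and $U_{v,g} := t_g$ (where $g \in G_v$ is regarded as a length-zero element of $\Lambda_{\GG_+}$) defines a $(\GG_+, \Sigma)$-family in $\OO(\Lambda_{\GG_+})$. Relation (1) follows from (S3): since distinct objects $v \neq w$ admit no common extension, $v \vee w = \emptyset$ and hence $t_v t_w = 0$. Relation (2) is precisely the image under $t$ of the fundamental-groupoid relation $\alpha_e(g)e = e\alpha_{\overline{e}}(g)$ combined with multiplicativity (S2); relation (3) is (S1) applied to $\lambda = e$; and the representation property $t_g t_{g'} = t_{gg'}$ together with $t_g^* = t_{g^{-1}}$ confirms that $g \mapsto t_g$ is a representation by partial unitaries. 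Relation (4) is the subtle one: I would show that $F_0 := \{he : e \in v\Gamma_+^1,\ h \in \Sigma_e\}$ is a finite (by row-finiteness) exhaustive subset of $v\Lambda_{\GG_+}$ --- exhaustiveness follows because any $\Sigma$-normalised word of positive length has its first letter in $F_0$ (via Proposition~\ref{prop:word category relation}), while a length-zero word $g$ satisfies $g\Lambda_{\GG_+} = v\Lambda_{\GG_+}$ --- so (S4) gives $t_v = \bigvee_{he \in F_0} t_{he}t_{he}^*$; Corollary~\ref{coro:word category extensions} shows the distinct $t_{he}t_{he}^*$ are orthogonal, turning the join into the sum appearing in relation (4). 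This yields $\Phi$ by universality.

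For the reverse map $\Psi$, I would define, for each $\lambda \in \Lambda_{\GG_+}$ with $\Sigma$-normalised form $h_1 e_1 \cdots h_n e_n g_{n+1}$ (unique by Proposition~\ref{prop:reduced representative GG+ word}), the element $T_\lambda := u_{r(e_1),h_1} s_{e_1} \cdots u_{r(e_n),h_n} s_{e_n} u_{s(e_n),g_{n+1}}$ in $C^*(\GG_+)$. The first key point is that the underlying assignment $\pi$ on arbitrary $\GG_+$-words descends to $\Lambda_{\GG_+}$: the normalisation procedure uses only relations inside the vertex groups (respected since each $g \mapsto u_{v,g}$ is a representation) and the relation $\alpha_e(g)e = e\alpha_{\overline{e}}(g)$ (respected by relation (2)), with no backtracking since all edges are directed. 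This simultaneously shows $\pi$ is multiplicative on composable words, which is exactly (S2): $T_\lambda T_\mu = T_{\lambda\mu}$. Relation (S1), $T_\lambda^* T_\lambda = T_{s(\lambda)}$, then follows by a telescoping computation using relations (2), (3) and the identity $u_{r(e),1}s_e = s_e$.

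The main obstacle is verifying (S3) and (S4) for the family $\{T_\lambda\}$. For (S3), Corollary~\ref{coro:word category extensions} collapses $\lambda \vee \mu$ to at most one element: when $\lambda \leq \mu$ the identity $T_\lambda T_\lambda^* T_\mu T_\mu^* = T_\mu T_\mu^*$ is a direct computation using (S1) and (S2), so the real content is the orthogonality $T_\lambda T_\lambda^* T_\mu T_\mu^* = 0$ when $\lambda, \mu$ have no common extension. By Proposition~\ref{prop:word category relation} this means $q(\lambda)$ and $q(\mu)$ either have different ranges --- handled by relation (1) --- or first diverge at some letter. For the latter case I would extract mutual orthogonality of the ``branch'' projections $u_{v,h}s_e s_e^* u_{v,h}^*$ directly from relation (4): since these sum to the projection $u_{v,1}$ and each is dominated by it, a standard positivity argument (compressing $Q_j\bigl(\sum_{i\neq j}Q_i\bigr)Q_j = 0$ shows each $Q_iQ_j = 0$) forces them to be pairwise orthogonal, and $T_\lambda T_\lambda^*$, $T_\mu T_\mu^*$ are dominated by orthogonal branch projections. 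For (S4) with an arbitrary finite exhaustive $F \subseteq v\Lambda_{\GG_+}$, I would iterate relation (4) (using no sources to keep extending, and row-finiteness to keep the sums finite) to obtain, for each $n$, an orthogonal decomposition $T_v = \sum_{\lambda} T_\lambda T_\lambda^*$ over all $\Sigma$-normalised words with $|q(\lambda)| = n$ and $r(\lambda) = v$. Taking $n = \max_{\beta \in F}|q(\beta)|$ and invoking exhaustiveness (with Proposition~\ref{prop:word category relation}) shows every such length-$n$ word dominates some $\beta \in F$, whence $T_v \leq \bigvee_{\beta \in F} T_\beta T_\beta^* \leq T_v$, giving (S4). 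This produces $\Psi$ by the universal property of $\OO(\Lambda_{\GG_+})$.

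Finally I would check $\Psi \circ \Phi$ and $\Phi \circ \Psi$ are the identity on generators: $\Phi(s_e) = t_e$ with $\Psi(t_e) = T_e = s_e$, and $\Phi(u_{v,g}) = t_g$ with $\Psi(t_g) = u_{v,g}$, give $\Psi \Phi = \id$; conversely $\Phi(T_\lambda) = t_{h_1}t_{e_1}\cdots t_{g_{n+1}} = t_\lambda$ by (S2), giving $\Phi \Psi = \id$. Hence $\Phi$ is an isomorphism.
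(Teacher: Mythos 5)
Your proposal is correct and takes essentially the same route as the paper's proof, which likewise builds a $(\GG_+,\Sigma)$-family in $\OO(\Lambda_{\GG_+})$ and a Cuntz--Krieger $\Lambda_{\GG_+}$-family $\{S_\lambda\}$ in $C^*(\GG_+)$ via the same telescoping, exhaustiveness, and iterated-relation-(4) arguments (the pairwise orthogonality you extract from relation (4) by the compression trick is exactly what the paper packages into its relation (S3$'$), via Proposition~\ref{prop:S3'}), and then concludes by universality and the check on generators. One small imprecision: Corollary~\ref{coro:word category extensions} does not collapse $\lambda \vee \mu$ to at most one element --- when $\lambda \leq \mu$ the set of minimal common extensions is the whole coset $\{\mu g : g \in G_{s(\mu)}\}$ --- but since $T_{\mu g}T_{\mu g}^* = T_\mu u_{s(\mu),g}u_{s(\mu),g}^* T_\mu^* = T_\mu T_\mu^*$ for every $g$, the join in (S3) is still the single projection $T_\mu T_\mu^*$, so your conclusion stands; this is precisely the content of the paper's Lemma~\ref{lemma:O(Lambda) partial unitaries} and Proposition~\ref{prop:S3'}.
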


To prove this, we use the universal property of $C^*(\GG)$ and $\OO(\Lambda_{\GG_+})$ to find maps $\Phi\colon C^*(\GG) \to \OO(\Lambda_{\GG_+})$ and $\Psi\colon \OO(\Lambda_{\GG_+}) \to C^*(\GG)$, and then we show that the maps $\Phi$ and $\Psi$ are mutual inverses. We first prove some intermediary results, and then tie everything together in a proof of the theorem at the end of this section.

\begin{lemma} \label{lemma:O(Lambda) partial unitaries}
    Let $\GG_+ = (\Gamma_+, G)$ be a countable, row-finite directed graph of groups with no sources, and let $\{S_\lambda : \lambda \in \Lambda_{\GG_+}\}$ be a family satisfying (S4). For any $v \in \Gamma^0$ and $g \in G_v$, we have that $S_g S_g^* = S_v$.
\end{lemma}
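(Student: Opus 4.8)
The plan is to recognise that any group element $g \in G_v$ is \emph{invertible} in the word category $\Lambda_{\GG_+}$, and to exploit this to show that the singleton $\{g\}$ is a finite exhaustive subset of $v\Lambda_{\GG_+}$. Once this is established, relation (S4) applied with $F = \{g\}$ immediately yields $S_v = \bigvee_{\lambda \in \{g\}} S_\lambda S_\lambda^* = S_g S_g^*$, which is exactly the claim. Since the family is only assumed to satisfy (S4), the whole argument must route through the notion of a finite exhaustive set; in particular I cannot appeal to $S_g$ being a partial isometry, so everything must be phrased in terms of the order structure of $\Lambda_{\GG_+}$.

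The key observation is that $g \in G_v$ is a length-zero $\GG_+$-word with $r(g) = s(g) = v$, and its inverse $g^{-1} \in G_v$ is again a length-zero $\GG_+$-word, hence also lies in $\Lambda_{\GG_+}$. Consequently $g$ is invertible in $\Lambda_{\GG_+}$, and I claim that $g\Lambda_{\GG_+} = v\Lambda_{\GG_+}$. The inclusion $g\Lambda_{\GG_+} \subseteq v\Lambda_{\GG_+}$ is immediate since $r(g) = v$; for the reverse inclusion, any $\mu \in v\Lambda_{\GG_+}$ satisfies $r(\mu) = v = s(g^{-1})$, so $g^{-1}\mu \in \Lambda_{\GG_+}$ and $\mu = g(g^{-1}\mu) \in g\Lambda_{\GG_+}$.

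From here, exhaustiveness of $\{g\}$ in $v\Lambda_{\GG_+}$ is straightforward: given any $\alpha \in v\Lambda_{\GG_+}$, we have $\alpha \in \alpha\Lambda_{\GG_+}$ (taking the trivial extension by $s(\alpha)$) and $\alpha \in v\Lambda_{\GG_+} = g\Lambda_{\GG_+}$, so $\alpha \in \alpha\Lambda_{\GG_+} \cap g\Lambda_{\GG_+}$ and this intersection is nonempty. Thus $\{g\}$ is a finite exhaustive subset of $v\Lambda_{\GG_+}$, and applying (S4) gives $S_v = S_g S_g^*$. The main (and essentially only) step is verifying exhaustiveness, which as shown reduces entirely to the invertibility of $g$; the degenerate case $g = 1_v$ is handled by the very same argument and incidentally records that $S_v$ is a projection.
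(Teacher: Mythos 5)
Your proposal is correct and follows essentially the same route as the paper: the paper's proof also verifies that $\{g\} \subseteq v\Lambda_{\GG_+}$ is finite exhaustive by exploiting the invertibility of $g$ (writing $\lambda = g(g^{-1}\lambda)$, so $g \leq \lambda$ for every $\lambda \in v\Lambda_{\GG_+}$) and then applies (S4). Your intermediate observation that $g\Lambda_{\GG_+} = v\Lambda_{\GG_+}$ is just a mild repackaging of that same step.
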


\begin{proof}
    Fix $v \in \Gamma^0$ and $g \in G_v$. The singleton set $\{g\} \subseteq v\Lambda$ is finite exhaustive, since for any $\lambda \in v\Lambda$ we have $g \leq g(g^{-1}\lambda) = \lambda$. Thus (S4) gives that $S_v = S_g S_g^*$, as required.
\end{proof}

\begin{prop} \label{prop:S3'}
    Let $\GG_+ = (\Gamma_+, G)$ be a countable, row-finite directed graph of groups with no sources. A family $\{S_\lambda : \lambda \in \Lambda_{\GG_+}\}$ satisfying (S1), (S2) and (S4) satisfies (S3) if and only if it satisfies the relation
    \begin{enumerate}
        \item[(S3')] $S_\lambda S_\lambda^* S_\mu S_\mu^* = \begin{cases}
            S_\mu S_\mu^* & \text{if}\ \lambda \leq \mu, \\
            S_\lambda S_\lambda^* & \text{if}\ \mu \leq \lambda, \\
            0 & \text{otherwise}.
        \end{cases}$
    \end{enumerate}
\end{prop}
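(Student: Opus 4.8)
The plan is to prove the equivalence (S3) $\Leftrightarrow$ (S3') for families already satisfying (S1), (S2), (S4), by using the single-alignment of $\Lambda_{\GG_+}$ (Corollary~\ref{coro:Lambda singly aligned}) together with the dichotomy on common extensions from Corollary~\ref{coro:word category extensions}. The key observation is that in (S3), the expression $\bigvee_{\nu \in \lambda \vee \mu} S_\nu S_\nu^*$ is governed entirely by the set $\lambda \vee \mu$ of minimal common extensions, and Corollary~\ref{coro:word category extensions} pins down exactly what this set looks like: either $\lambda, \mu$ have no common extension (so $\lambda \vee \mu = \emptyset$ and the right-hand side of (S3) is the empty join, $0$), or one is below the other, in which case the larger element is itself the unique minimal common extension.

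\textbf{Key steps.} First I would record, using Corollary~\ref{coro:word category extensions}, the precise description of $\lambda \vee \mu$: if $\lambda$ and $\mu$ have no common extension then $\lambda \vee \mu = \emptyset$; if $\lambda \leq \mu$ then $\mu$ is a common extension, and since any common extension $\nu$ satisfies $\lambda \leq \nu$ and $\mu \leq \nu$ (so $\mu \leq \nu$), the element $\mu$ is the unique minimal one, giving $\lambda \vee \mu = \{\mu\}$; symmetrically $\mu \leq \lambda$ gives $\lambda \vee \mu = \{\lambda\}$. (If both $\lambda \leq \mu$ and $\mu \leq \lambda$ hold, both descriptions agree, as $S_\lambda S_\lambda^* = S_\mu S_\mu^*$ then follows from (S1),(S2).) Next I would substitute these three cases into the right-hand side of (S3). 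When $\lambda \vee \mu = \{\mu\}$, the join $\bigvee_{\nu \in \lambda\vee\mu} S_\nu S_\nu^*$ is just the single term $S_\mu S_\mu^*$; when $\lambda \vee \mu = \{\lambda\}$ it is $S_\lambda S_\lambda^*$; and when $\lambda \vee \mu = \emptyset$ the empty join is $0$. This shows that for a family satisfying (S1),(S2),(S4), the relation (S3) says precisely that $S_\lambda S_\lambda^* S_\mu S_\mu^*$ equals $S_\mu S_\mu^*$, $S_\lambda S_\lambda^*$, or $0$ in the three respective cases — which is exactly (S3'). The equivalence then follows in both directions simultaneously, since the left-hand sides of (S3) and (S3') are literally the same product.

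\textbf{Main obstacle.} The routine verifications (each $S_\lambda$ is a partial isometry with range projection $S_\lambda S_\lambda^*$, via Remark~\ref{remark:LCSC family}) are harmless; the subtle point is justifying that the three cases of Corollary~\ref{coro:word category extensions} are genuinely exhaustive and that they correctly compute the set $\lambda \vee \mu$ of \emph{minimal} common extensions. In particular I must confirm that when $\lambda \leq \mu$ there is no \emph{smaller} common extension than $\mu$ itself (immediate, since every common extension dominates $\mu$), so that $\lambda \vee \mu = \{\mu\}$ rather than some larger set. I expect this to be the only place requiring care; once the structure of $\lambda \vee \mu$ is settled, the proof is a direct case substitution with no further computation. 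One should also note that the notation $S_\nu S_\nu^*$ being a projection (so that the ``join'' $\bigvee$ is meaningful) relies only on Remark~\ref{remark:LCSC family}, which applies here.
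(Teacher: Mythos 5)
Your overall strategy coincides with the paper's: show that, under (S1), (S2), (S4), the join $\bigvee_{\nu\in\lambda\vee\mu}S_\nu S_\nu^*$ on the right-hand side of (S3) equals the right-hand side of (S3') case by case, so that the two relations are literally the same. However, your computation of $\lambda\vee\mu$ fails at exactly the point you flagged as the main obstacle. The relation $\leq$ on $\Lambda_{\GG_+}$ is only a preorder --- Definition~\ref{def:LCSC order} notes explicitly that antisymmetry fails --- because every vertex-group element is invertible in the category. Consequently, when $\lambda\leq\mu$, your argument that ``every common extension dominates $\mu$'' shows that $\mu$ is \emph{a} minimal common extension, but not that it is the \emph{unique} one: by the definition of minimality, every $\nu$ with $\mu\leq\nu$ and $\nu\leq\mu$ is also minimal, and by Proposition~\ref{prop:word category relation} this set is $q^{-1}(q(\mu))=\{\mu g : g\in G_{s(\mu)}\}$, which is infinite whenever $G_{s(\mu)}$ is infinite (the typical case, e.g.\ for directed graphs of infinite cyclic groups). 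So $\lambda\vee\mu$ is an entire orbit of $\mu$ under right multiplication by the vertex group, not the singleton $\{\mu\}$.

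This gap is substantive because the join in (S3) then runs over all $\nu=\mu g$, and collapsing it to the single projection $S_\mu S_\mu^*$ requires the additional identity $S_{\mu g}S_{\mu g}^* = S_\mu S_g S_g^* S_\mu^* = S_\mu S_\mu^*$, i.e.\ $S_gS_g^* = S_{s(\mu)}$ for group elements $g$. The paper supplies exactly this as Lemma~\ref{lemma:O(Lambda) partial unitaries}, proved by applying (S4) to the finite exhaustive singleton $\{g\}\subseteq v\Lambda$. Note that your proposal never invokes (S4) anywhere, which should have been a warning sign: (S4) sits in the hypotheses of the proposition precisely to make this step available. (One can alternatively derive $S_gS_g^*=S_v$ from (S1) and (S2) alone, using $S_gS_{g^{-1}}=S_v=S_{g^{-1}}S_g$ together with a short argument comparing the projections $S_gS_g^*$ and $S_v$; your parenthetical remark about the degenerate case $\lambda\leq\mu\leq\lambda$ gestures at this, but no such argument is written out, and in the main case you do not even recognise that it is needed.) With the correct identification $\lambda\vee\mu=\{\mu g : g\in G_{s(\mu)}\}$ and this lemma in hand, the rest of your case substitution goes through exactly as in the paper's proof.
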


\begin{proof}
    We show that
    \[ \bigvee_{\nu \in \lambda \vee \mu} S_\nu S_\nu^* = 
       \begin{cases}
           S_\mu S_\mu^* & \text{if}\ \lambda \leq \mu, \\
           S_\lambda S_\lambda^* & \text{if}\ \mu \leq \lambda, \\
           0 & \text{otherwise}.
       \end{cases}\]
    Fix $\lambda, \mu \in \Lambda_{\GG_+}$. By Corollary~\ref{coro:word category extensions}, we have that $\lambda \vee \mu \neq \emptyset$ if and only if $\lambda \leq \mu$ or $\mu \leq \lambda$. In the case that $\lambda \leq \mu$, we have that $\lambda \vee \mu$ is the set of $\GG_+$-words $\nu$ such that $\mu \leq \nu$ and $\nu \leq \mu$, which by Proposition~\ref{prop:word category relation} is the set $q^{-1}(\mu) = \{\mu g : g \in G_{s(\mu)}\}$. Now, (S2) and Lemma~\ref{lemma:O(Lambda) partial unitaries} together give that
    \[ S_{\mu g} S_{\mu g}^* = S_\mu S_g S_g^* S_\mu^* = S_\mu S_{s(\mu)} S_\mu^* = S_\mu S_\mu^* \]
    for all $g \in G_{s(\mu)}$. So in this case
    \[ \bigvee_{\nu \in \lambda \vee \mu} S_\nu S_\nu^* = \bigvee_{g \in G_{s(\mu)}} S_{\mu g} S_{\mu g}^* = \bigvee_{g \in G_{s(\mu)}} S_\mu S_\mu^* = S_\mu S_\mu^* \]
    as required. A similar argument in the case where $\mu \leq \lambda$ shows that
    \[ \bigvee_{\nu \in \lambda \vee \mu} S_\nu S_\nu^* = \bigvee_{g \in G_{s(\lambda)}} S_{\lambda g} S_{\lambda g}^* = \bigvee_{g \in G_{s(\lambda)}} S_\lambda S_\lambda^* = S_\lambda S_\lambda^*. \]
    Finally, if neither $\lambda \leq \mu$ nor $\mu \leq \lambda$, then $\lambda \vee \mu = \emptyset$ and so $\bigvee_{\nu \in \lambda \vee \mu} S_\nu S_\nu^* = 0$ as required.
\end{proof}

\begin{prop} \label{prop:dgog family in CK}
    Let $\GG_+ = (\Gamma_+, G)$ be a countable, row-finite directed graph of groups with no sources, and let $\Sigma$ be a set of transversals for $\GG_+$. Let $\{t_\lambda : \lambda \in \Lambda_{\GG_+}\}$ be the Cuntz--Krieger $\Lambda_{\GG_+}$-family generating $\OO(\Lambda_{\GG_+})$. For each $e \in \Gamma_+^1$, let $S_e = t_e$; and for each $v \in \Gamma^0$ and $g \in G_v$, let $U_{v,g} = t_g$. Then $\{S_e : e \in \Gamma_+^1\} \cup \{U_{v,g} : v \in \Gamma^0,\ g \in G_v\}$ is a $(\GG_+, \Sigma)$-family in $\OO(\Lambda_{\GG_+})$.
\end{prop}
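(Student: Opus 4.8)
The goal is to verify that the elements $\{S_e : e \in \Gamma_+^1\} \cup \{U_{v,g} : v \in \Gamma^0,\ g \in G_v\}$, defined from the Cuntz--Krieger family $\{t_\lambda\}$ by $S_e = t_e$ and $U_{v,g} = t_g$, satisfy the four defining relations of a $(\GG_+,\Sigma)$-family. The plan is to check each of relations (1)--(4) in turn, translating each into a statement about the $t_\lambda$ and invoking the Cuntz--Krieger relations (S1)--(S4), together with the order-theoretic results on $\Lambda_{\GG_+}$ proved earlier (Corollary~\ref{coro:word category extensions}, Proposition~\ref{prop:word category relation}) and the reformulation of (S3) as (S3') from Proposition~\ref{prop:S3'}.

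First I would record some preliminary facts. Since each object $v$ is identified with the identity morphism $1_v$, Remark~\ref{remark:LCSC family} gives that each $t_v = S_v$ is a projection and each $t_\lambda$ is a partial isometry; in particular each $U_{v,g} = t_g$ is a partial isometry. Relation (1), $U_{v,1}U_{w,1} = 0$ for $v \neq w$, is the statement $t_v t_w = 0$ for distinct objects, which follows because $t_v^* t_v = t_{s(v)} = t_v$ and the ranges of $t_v, t_w$ sit over distinct objects; concretely $t_v t_w = t_v t_w^* t_w = $ (using orthogonality of the projections $S_vS_v^*$ over distinct units, a consequence of (S3')/(S3) since $v$ and $w$ have no common extension). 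I would next verify that $g \mapsto U_{v,g} = t_g$ is a representation of $G_v$ by partial unitaries: for $g, h \in G_v$ we have $t_g t_h = t_{gh}$ by (S2) (as $s(g) = r(h) = v$), and $U_{v,g}$ is a partial unitary because $U_{v,g}^* U_{v,g} = t_g^* t_g = t_{s(g)} = t_v = U_{v,1}$ by (S1), with $U_{v,g}U_{v,g}^* = t_g t_g^* = t_v$ as well by Lemma~\ref{lemma:O(Lambda) partial unitaries}. Thus each $U_{v,g}$ is a partial unitary with initial and final projection $t_v$.

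For the remaining relations I would argue as follows. Relation (2), $U_{r(e),\alpha_e(g)}S_e = S_e U_{s(e),\alpha_{\overline e}(g)}$, becomes $t_{\alpha_e(g)} t_e = t_e t_{\alpha_{\overline e}(g)}$; since both sides are products of composable morphisms, (S2) reduces this to $t_{\alpha_e(g)e} = t_{e\alpha_{\overline e}(g)}$, and these two $\GG_+$-words are equal as elements of $\Lambda_{\GG_+} \subseteq F(\GG)$ by relation (ii) in the fundamental groupoid, so the identity holds. Relation (3), $U_{s(e),1} = S_e^* S_e$, is exactly $t_{s(e)} = t_e^* t_e$, which is (S1) applied to $\lambda = e$. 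Relation (4) is the most substantial: it reads $t_v = \sum_{e \in v\Gamma_+^1,\ h \in \Sigma_e} t_h t_e t_e^* t_h^*$. Here I would use (S2) and Lemma~\ref{lemma:O(Lambda) partial unitaries} to rewrite $t_h t_e t_e^* t_h^* = t_{he} t_{he}^*$ (noting $he \in \Lambda_{\GG_+}$ is $\Sigma$-normalised), so the right side is $\sum t_{he} t_{he}^*$ over the finite exhaustive set $F = \{he : e \in v\Gamma_+^1,\ h \in \Sigma_e\} = v E_\Sigma^1 \subseteq v\Lambda_{\GG_+}$.

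The main obstacle is relation (4), and it has two parts. First I must check that $F = \{he : e \in v\Gamma_+^1,\ h \in \Sigma_e\}$ is a finite exhaustive subset of $v\Lambda_{\GG_+}$: finiteness follows from row-finiteness of $\Gamma_+$ together with finiteness of each transversal $\Sigma_e$ (which I should confirm, or argue the sum is suitably interpreted), and exhaustiveness follows because any $\lambda \in v\Lambda_{\GG_+}$ with $|\lambda| \geq 1$ has $\Sigma$-normalised form beginning with some $h_1 e_1$ with $e_1 \in v\Gamma_+^1$ and $h_1 \in \Sigma_{e_1}$, so $\lambda \in h_1 e_1 \Lambda_{\GG_+}$ and hence $\lambda\Lambda_{\GG_+} \cap (h_1e_1)\Lambda_{\GG_+} \neq \emptyset$, while for $\lambda = g \in G_v$ (length zero) one has $g \geq he$ for suitable $he$ since no sources guarantees $v\Gamma_+^1 \neq \emptyset$. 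Second, given that $F$ is finite exhaustive, (S4) gives $t_v = \bigvee_{\mu \in F} t_\mu t_\mu^*$; to conclude that the supremum equals the \emph{sum}, I would show the range projections $\{t_{he}t_{he}^*\}_{he \in F}$ are mutually orthogonal. This follows from (S3'): for distinct $he, h'e'$ in $F$, neither is an extension of the other (they have equal length but are distinct $\Sigma$-normalised words, so neither $\leq$ the other by Proposition~\ref{prop:word category relation}), whence $t_{he}t_{he}^* t_{h'e'}t_{h'e'}^* = 0$ and the supremum of these orthogonal projections coincides with their sum. This completes the verification of all four relations.
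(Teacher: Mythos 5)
Your proposal is correct and takes essentially the same route as the paper's proof: partial unitarity of the $U_{v,g}$ via (S1), (S2) and Lemma~\ref{lemma:O(Lambda) partial unitaries}; relations (1)--(3) directly from (S1), (S2), (S3$'$) and the word-category identity $\alpha_e(g)e = e\alpha_{\overline{e}}(g)$; and relation (4) by showing $\{he : e \in v\Gamma_+^1,\ h \in \Sigma_e\}$ is finite exhaustive, applying (S4), and converting the join to a sum via pairwise orthogonality from (S3$'$). The only blemishes are trivial: in the length-zero case the inequality should read $g \leq he$ (since $he = g(g^{-1}he)$), not $g \geq he$ (though comparability in either direction suffices for exhaustiveness), and the finiteness of each $\Sigma_e$ that you flagged is indeed part of what row-finiteness means for directed graphs of groups, as it is exactly what makes $E_\Sigma$ row-finite.
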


\begin{proof}
    Remark~\ref{remark:LCSC family} tells us that $\{S_e : e \in \Gamma_+^1\}$ is a family of partial isometries. Now fix $v \in \Gamma^0$. Each $U_{v,g} = t_g$, $g \in G_v$ is a partial isometry since $t_g^* t_g^* = t_v = t_g t_g^*$ by (S1) and Lemma~\ref{lemma:O(Lambda) partial unitaries}. Moreover, for any $g_1, g_2 \in G_v$ we have $t_{g_1} t_{g_2} = t_{g_1 g_2}$ by (S2), so $G_v \ni g \mapsto U_{v,g}$ is a representation by partial unitaries.
    
    Now we show that the $S_e$'s and $U_{v,g}$'s satisfy the relations in Definition~\ref{def:dgog algebra}. For any $v, w \in \Gamma^0$ with $v \neq w$, (S3') gives that $t_v t_w = t_v t_v^* t_w t_w^* = 0$, so relation (1) is satisfied. For any $e \in \Gamma_+^1$ and $g \in G_e$, we have $t_{\alpha_e(g)} t_e = t_{\alpha_e(g) e} = t_{e \alpha_{\overline{e}}(g)} = t_e t_{\alpha_{\overline{e}}(g)}$ using (S2) and the relation $\alpha_e(g) e = e \alpha_{\overline{e}}(g)$ in the word category, so relation (2) is satisfied. Relation (3) follows immediately from (S1).
    
    Finally, for relation (4), we first claim that for any $v \in \Gamma^0$, the set $\{he : e \in v\Gamma_+^1,\ h \in \Sigma_e\}$ is an exhaustive subset of $v\Lambda_{\GG_+}$. To see this, take $\lambda \in v \Lambda_{\GG_+}$. If $\lambda = g_1$ for some $g_1 \in G_v$, then $\lambda = g_1 \leq g_1(g_1^{-1}he) = he$ for any $e \in v\Gamma_+^1$ and $h \in \Sigma_e$; otherwise if $\lambda = h_1 e_1 \dots h_n e_n g_{n+1}$ in reduced form, then $h_1 e_1 \leq \lambda$. In either case, $\lambda$ is comparable with some element in $\{he : e \in v\Gamma_+^1,\ h \in \Sigma_e\}$, showing that it is exhaustive. Then (S4) gives that
    \[ t_v = \bigvee_{e \in v\Gamma_+^1,\ h \in \Sigma_e} t_{he} t_{he}^*. \]
    But (S3') implies that the $t_{he} t_{he}^*$'s are pairwise orthogonal, and so we get that
    \[ t_v = \sum_{e \in v\Gamma_+^1,\ h \in \Sigma_e} t_{he} t_{he}^* = \sum_{e \in v\Gamma_+^1,\ h \in \Sigma_e} t_h t_e t_e^* t_h^*, \]
    which is relation (4). This proves that $\{S_e : e \in \Gamma_+^1\} \cup \{U_{v,g} : v \in \Gamma^0,\ g \in G_v\}$ is a $(\GG_+, \Sigma)$-family in $\OO(\Lambda_{\GG_+})$, as claimed.
\end{proof}

\begin{prop} \label{prop:CK family in dgog}
    Let $\GG_+ = (\Gamma_+, G)$ be a countable, row-finite directed graph of groups with no sources, and let $\Sigma$ be a set of transversals for $\GG_+$. For each $\lambda = g_1 e_1 g_2 e_2 \dots g_n e_n g_{n+1}$ in $\Lambda_{\GG_+}$, define
    \[ S_\lambda = u_{r(e_1),g_1} s_{e_1} u_{r(e_2),g_2} s_{e_2} \dots u_{r(e_n),g_n} s_{e_n} u_{s(e_n),g_{n+1}}. \]
    Then $\{S_\lambda : \lambda \in \Lambda_{\GG_+}\}$ is a Cuntz--Krieger $\Lambda_{\GG_+}$-family in $C^*(\GG_+)$.
\end{prop}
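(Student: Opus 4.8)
The goal is to verify the four Cuntz--Krieger relations (S1)--(S4) for the family $\{S_\lambda : \lambda \in \Lambda_{\GG_+}\}$. Before doing so I would record two preliminary facts. First, $S_\lambda$ is well defined on $\Lambda_{\GG_+}$, i.e.\ independent of the $\GG_+$-word representing $\lambda$: two $\GG_+$-words represent the same morphism of $\Lambda_{\GG_+} \subseteq F(\GG)$ exactly when they are connected by the vertex-group relations and the commutation relations $\alpha_e(g)e = e\alpha_{\overline{e}}(g)$, and the defining relation~(2) of a $(\GG_+,\Sigma)$-family together with the fact that each $g \mapsto u_{v,g}$ is a representation ensures that the product defining $S_\lambda$ is invariant under precisely these moves. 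Second, writing $p_v := u_{v,1}$, I would note from relations~(1)--(3) that the $p_v$ are pairwise orthogonal projections, that $u_{v,g}^*u_{v,g} = u_{v,g}u_{v,g}^* = p_v$ and $s_e^*s_e = p_{s(e)}$, and---taking $g = 1$ in relation~(2) combined with relation~(3)---that $p_{r(e)} s_e = s_e$. This last identity gives $s_e s_e^* \leq p_{r(e)}$, and more generally $p_{r(\lambda)}S_\lambda = S_\lambda$ and hence $S_\lambda S_\lambda^* \leq p_{r(\lambda)}$ for every $\lambda$, since the leftmost factor of $S_\lambda$ is absorbed by $p_{r(\lambda)}$. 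It follows that each summand $u_{v,h}s_e s_e^* u_{v,h}^*$ in relation~(4) is a projection dominated by $p_v$; as these sum to the projection $p_v$, the standard argument forces them to be mutually orthogonal (this persists when some $\Sigma_e$ is infinite, the sums being read as strictly convergent).

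With these in hand, (S1) follows by telescoping $S_\lambda^* S_\lambda$ from the innermost factor outward, repeatedly using $u_{v,g}^*u_{v,g} = p_v$, $p_{r(e)}s_e = s_e$, and $s_e^*s_e = p_{s(e)}$ for consecutive edges; the product collapses to $p_{s(\lambda)} = S_{s(\lambda)}$. Relation (S2) is the statement that concatenating the defining products merges the trailing group element of $\lambda$ and the leading group element of $\mu$ at their common vertex $w$ via $u_{w,g}u_{w,h} = u_{w,gh}$, yielding exactly the product defining $S_{\lambda\mu}$; the degenerate cases where $\lambda$ or $\mu$ is a single group element I would treat separately, but these are immediate.

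For the remaining relations I would first establish (S4) and then deduce (S3) from Proposition~\ref{prop:S3'} by verifying (S3'). For (S4), let $F \subseteq v\Lambda_{\GG_+}$ be finite exhaustive and set $M = \max_{\lambda \in F}\abs{q(\lambda)}$. Iterating relation~(4) along $E_\Sigma$---using that $\GG_+$ has no sources, so every vertex receives an edge and every short path extends---gives $p_v = \bigvee_{\alpha \in vE_\Sigma^M} S'_\alpha {S'_\alpha}^*$, where $S'_\alpha$ denotes the product in $C^*(\GG_+)$ associated to the $E_\Sigma$-path $\alpha$ and the range projections are mutually orthogonal by the preliminary step. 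Since $q(F)$ is exhaustive in $E_\Sigma^*$ (a consequence of Proposition~\ref{prop:word category relation} and Corollary~\ref{coro:word category extensions}: any $E_\Sigma$-path lifts to $\Lambda_{\GG_+}$ and is comparable to some element of $F$), and each $\alpha \in vE_\Sigma^M$ has length at least $\abs{q(\beta)}$ for the comparable $\beta \in F$, we get $q(\beta) \leq \alpha$, whence $S'_\alpha {S'_\alpha}^* \leq S'_{q(\beta)}{S'_{q(\beta)}}^* = S_\beta S_\beta^*$, the last equality because the trailing group unitary of $S_\beta$ does not change its range projection. Summing over $\alpha$ yields $p_v \leq \bigvee_{\beta \in F} S_\beta S_\beta^* \leq p_v$, which is (S4).

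Finally, for (S3') I would split on the order relation via Corollary~\ref{coro:word category extensions}. When $\lambda \leq \mu$, writing $\mu = \lambda\nu$ and applying (S2) gives $S_\mu S_\mu^* = S_\lambda S_\nu S_\nu^* S_\lambda^*$, and $S_\lambda^* S_\lambda = p_{s(\lambda)}$ absorbs into $S_\nu$, yielding $S_\lambda S_\lambda^* S_\mu S_\mu^* = S_\mu S_\mu^*$ (the case $\mu \leq \lambda$ is symmetric). When $\lambda,\mu$ are incomparable, Proposition~\ref{prop:word category relation} says $q(\lambda),q(\mu)$ branch at some first edge; factoring out the common prefix $\tilde\eta$ as $\lambda = \tilde\eta\lambda_0$, $\mu = \tilde\eta\mu_0$ with $\lambda_0,\mu_0$ beginning with distinct $E_\Sigma$-edges, relation (S2) reduces the product to $S_{\tilde\eta}\bigl(S_{\lambda_0}S_{\lambda_0}^* S_{\mu_0}S_{\mu_0}^*\bigr)S_{\tilde\eta}^*$, and since $S_{\lambda_0}S_{\lambda_0}^*$ and $S_{\mu_0}S_{\mu_0}^*$ are dominated by orthogonal projections from relation~(4), their product vanishes. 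Invoking Proposition~\ref{prop:S3'} then gives (S3). I expect the main obstacle to be (S4): it is the only relation that genuinely requires passing between $\Lambda_{\GG_+}$ and the auxiliary graph $E_\Sigma$, translating finite exhaustiveness through $q$ and iterating relation~(4), and it is also where the orthogonality of the edge range projections---the same fact powering the incomparable case of (S3')---must be handled with care, including when the transversals are infinite.
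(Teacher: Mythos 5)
Your proposal is correct and follows essentially the same route as the paper's proof: well-definedness via relation~(2), telescoping for (S1), (S2) by construction, (S4) by iterating relation~(4) through $E_\Sigma$ and transferring exhaustiveness of $F$ along $q$ with the maximal-length argument, and (S3) via (S3') using the common-prefix decomposition together with orthogonality of the relation-(4) summands. The only cosmetic differences are that the paper treats the case $r(\lambda) \neq r(\mu)$ in (S3') explicitly via relation~(1) --- a case your ``branch at a first edge'' phrasing skips, though your preliminary facts $p_v p_w = 0$ and $S_\lambda S_\lambda^* \leq p_{r(\lambda)}$ already dispose of it --- and that under the standing row-finiteness hypothesis the sums in relation~(4) are finite, so your caution about strictly convergent sums for infinite transversals is moot.
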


\begin{proof}
    First note that the $S_\lambda$'s are well-defined, since if the $\GG_+$-words $g_1 e_1 g_2 e_2 \dots g_n e_n g_{n+1}$ and $h_1 e_1 h_2 e_2 \dots h_n e_n h_{n+1}$ are two presentations of the same element $\lambda \in \Lambda_{\GG_+} \subseteq F(\GG)$, then they are related via a finite number of applications of the relation $\alpha_e(g) e = e \alpha_{\overline{e}}(g)$ which is implemented by relation (2) of Definition~\ref{def:dgog algebra}.
    
    To see that (S1) holds, take $\lambda = g_1 e_1 g_2 e_2 \dots g_n e_n g_{n+1}$. Note that we have
    \[u_{r(e_i),g_i}^* u_{r(e_i),g_i} = u_{r(e_i),1} = s_{e_i}^* s_{e_i}\]
    for $i=1,\dots,n$, and also $u_{s(e_n),g_{n+1}}^* u_{s(e_n),g_{n+1}} = u_{s(e),1}$. Applying these relations iteratively to $S_\lambda^* S_\lambda$ gives that $S_\lambda^* S_\lambda = u_{s(e_n),1} = t_{s(\lambda)} = S_{s(\lambda)}$, as required.
    
    (S2) holds by definition of the $S_\lambda$'s. For (S3), Proposition~\ref{prop:S3'} means that it is enough to show that (S3') holds. Take $\lambda, \mu \in \Lambda_{\GG_+}$. We have four cases. If $\lambda \leq \mu$, then there exists $\nu \in \Lambda_{\GG_+}$ with $\mu = \lambda\nu$. So
    \[ S_\lambda S_\lambda^* S_\mu S_\mu^* = S_\lambda S_\lambda^* S_\lambda S_\nu S_\nu^* S_\lambda^* = S_\lambda S_\nu S_\nu^* S_\lambda^* = S_\mu S_\mu^*. \]
    Similarly if $\mu \leq \lambda$, then there exists $\nu \in \Lambda_{\GG_+}$ with $\lambda = \mu\nu$. So
    \[ S_\lambda S_\lambda^* S_\mu S_\mu^* = S_\mu S_\nu S_\nu^* S_\mu^* S_\mu S_\mu^* = S_\mu S_\nu S_\nu^* S_\mu^* = S_\lambda S_\lambda^*. \]
    If $r(\lambda) \neq r(\mu)$, then $\lambda$ and $\mu$ are not comparable, and relation (1) gives that
    \[ S_\lambda S_\lambda^* S_\mu S_\mu^* = S_\lambda S_\lambda^* u_{r(\lambda),1_{r(\lambda)}} u_{r(\mu),1_{r(\mu)}} S_\mu S_\mu^* = 0. \]
    Finally we come to the case where $\lambda$ and $\mu$ have the same range but are not comparable. Write $\nu$ for the maximal common initial subpath of $\lambda$ and $\mu$. Then
    \[ \lambda = \nu(h_1 e_1)\lambda' \quad \text{and} \quad \mu = \nu(h_2 e_2)\mu' \]
    for some $h_1 e_1, \lambda', h_2 e_2, \mu' \in \Lambda_{\GG_+}$ with $h_1 e_1 \neq h_2 e_2$. So
    \[ S_\lambda^* S_\mu = S_{\lambda'}^* S_{h_1 e_1}^* S_\nu^* S_\nu S_{h_2 e_2} S_{\mu'} =  S_{\lambda'}^* S_{h_1 e_1}^* S_{h_2 e_2} S_{\mu'} = 0 \]
    since $S_{h_1 e_1}$ and $S_{h_2 e_2}$ have orthogonal range projections due to relation (4).
    
    For (S4), fix $v \in \Gamma^0$ and a finite exhaustive set $F \subseteq v\Lambda_{\GG_+}$. First note that relation (4) says that $u_{w,1} = \sum_{he \in wE_\Sigma^1} S_{he} u_{s(e),1} S_{he}^*$ for all $w \in \Gamma^0$, and applying this inductively gives that $u_{v,1} = \sum_{\alpha \in E_\Sigma^n} S_\alpha S_\alpha^*$ for all $n \in \N$. Now, for any $\lambda \in F$, setting $n = \abs{\lambda}$ (that is, the number of edges in the $\GG_+$-word $\lambda$) shows that $S_\lambda S_\lambda^* = S_{q(\lambda)} S_{q(\lambda)}^* \leq u_{v,1}$, so $\bigvee_{\lambda \in F} S_\lambda S_\lambda^* \leq u_{v,1}$.
    
    It remains to prove that $u_{v,1} \leq \bigvee_{\lambda \in F} S_\lambda S_\lambda^*$. Let $n = \max_{\lambda \in F} \abs{q(\lambda)}$. Since $F$ is exhaustive, for each $\alpha \in E_\Sigma^n$ there is some $\lambda_\alpha \in F$ such that $\alpha 1_{s(\alpha)}$ and $\lambda_\alpha$ have a common extension. But Corollary~\ref{coro:word category extensions} and the choice of length of $\alpha$ force $\lambda_\alpha \leq \alpha 1_{s(\alpha)}$. This implies that $S_{\alpha 1_{s(\alpha)}} S_{\alpha 1_{s(\alpha)}}^* \leq S_{\lambda_\alpha} S_{\lambda_\alpha}^* \leq \bigvee_{\lambda \in F} S_\lambda S_\lambda^*$ for all $\alpha \in E_\Sigma^n$, and so
    \[ u_{v,1} = \sum_{\alpha \in E_\Sigma^n} S_\alpha S_\alpha^* = \bigvee_{\alpha \in E_\Sigma^n} S_{\alpha 1_{s(\alpha)}} S_{\alpha 1_{s(\alpha)}}^* \leq \bigvee_{\lambda \in F} S_\lambda S_\lambda^*, \]
    as required.
\end{proof}

We now put everything together to prove Theorem~\ref{thm:dgogC* isomorphism}.

\begin{proof}[Proof of Theorem~\ref{thm:dgogC* isomorphism}]
    Proposition~\ref{prop:dgog family in CK} along with the universal property of $C^*(\GG_+)$ show that the map $\Phi$ in the statement of the theorem is a well-defined $*$-homomorphism. Proposition~\ref{prop:CK family in dgog} along with the universal property of $\OO(\Lambda_{\GG_+})$ give a $*$-homomorphism $\Psi\colon \OO(\Lambda_{\GG_+}) \to C^*(\GG_+)$ satisfying $\Psi(t_e) = s_e$ for $e \in \Gamma_+^1$ and $\Psi(t_g) = u_{v,g}$ for $v \in \Gamma^0$ and $g \in G_v$. So $\Psi \circ \Phi$ is the identity map on $C^*(\GG_+)$, which means that $\Phi$ is injective. To see that $\Phi$ is surjective, note that any $\lambda \in \Lambda_{\GG_+}$ is a finite composition of elements of the form $e \in \Gamma_+^1$ and $g \in G_v$, $v \in \Gamma^0$. So the elements $t_e = \Phi(s_e)$ and $t_g = \Phi(u_{v,g})$ generate $\OO(\Lambda_{\GG_+})$, meaning that $\Phi$ is surjective. Hence $\Phi$ is a $C^*$-isomorphism, as claimed.
\end{proof}

\section{A $C^*$-algebraic `directed' Bass--Serre theorem} \label{sec:C* Bass-Serre thm}

This section is dedicated to proving the following result, which can be seen as the directed version of \cite[Theorem~4.1]{BMPST}, described in that paper as a $C^*$-algebraic Bass--Serre theorem. This result also generalises \cite[Corollary~4.14]{Kumjian-Pask}, which gives the result in the setting of directed graph algebras.

\begin{thm} \label{thm:morita equiv}
    Let $\GG_+ = (\Gamma_+, G)$ be a countable, row-finite directed graph of groups with no sources, and choose a base vertex $x \in \Gamma^0$. Then $C^*(\GG_+)$ is Morita equivalent to $C_0(\partial X_{\GG_+,x}) \rtimes \pi_1(\GG,x)$.
\end{thm}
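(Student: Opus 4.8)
The plan is to realise both $C^*$-algebras as the $C^*$-algebras of \'etale groupoids obtained by restricting one fixed transformation groupoid to two different full open subsets of its unit space, and then to invoke the standard fact that such restrictions are equivalent groupoids with Morita equivalent $C^*$-algebras. First I would reduce to the word category: by Theorem~\ref{thm:dgogC* isomorphism} we have $C^*(\GG_+) \cong \OO(\Lambda_{\GG_+})$, and since $\Lambda_{\GG_+}$ is a finitely aligned (indeed singly aligned, by Corollary~\ref{coro:Lambda singly aligned}) LCSC, the results of \cite{LCSCsemigroup} supply an \'etale groupoid model $\mathscr{H}_{\Lambda_{\GG_+}}$ with $\OO(\Lambda_{\GG_+}) \cong C^*(\mathscr{H}_{\Lambda_{\GG_+}})$. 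Because $\GG_+$ has no sources, the unit space of this model is the boundary path space, which I would identify with $E_\Sigma^\infty$, and its morphisms are germs of the zigzag maps associated to $\lambda\mu^{-1}$, $\lambda,\mu \in \Lambda_{\GG_+}$. The key identification is then that $\mathscr{H}_{\Lambda_{\GG_+}}$ is isomorphic to the restriction $(F(\GG) \ltimes \partial W_{\GG_+})|_{E_\Sigma^\infty}$ of the transformation groupoid from Section~\ref{subsec:F(G) boundary action}: Proposition~\ref{prop:F(G) acting on E_Sigma^infty} says precisely that the elements $\gamma \in F(\GG)$ carrying a given $\alpha \in E_\Sigma^\infty$ back into $E_\Sigma^\infty$ are exactly those of the form $\lambda\mu^{-1}$ with $q(\mu) \le \alpha$, matching the zigzags parametrising $\mathscr{H}_{\Lambda_{\GG_+}}$. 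I would check that this bijection respects composition, inversion, and the cylinder-set topologies, yielding $C^*(\GG_+) \cong C^*\big((F(\GG)\ltimes\partial W_{\GG_+})|_{E_\Sigma^\infty}\big)$.

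Next I would treat the crossed-product side. Recall from Section~\ref{subsec:background groupoids} that $C_0(\partial X_{\GG_+,x}) \rtimes \pi_1(\GG,x) \cong C^*(\pi_1(\GG,x) \ltimes \partial X_{\GG_+,x})$. Since $\pi_1(\GG,x)$ is the isotropy group $xF(\GG)x$ and $\partial X_{\GG_+,x}$ is exactly the clopen subset of $\partial W_{\GG_+}$ consisting of those eventually directed words of range $x$, a short check using the composability conditions in the transformation groupoid shows that
\[
    \pi_1(\GG,x) \ltimes \partial X_{\GG_+,x} \;=\; (F(\GG)\ltimes\partial W_{\GG_+})\big|_{\partial X_{\GG_+,x}},
\]
because a pair $(\gamma,\xi)$ with $\xi$ and $\gamma\cdot\xi$ both of range $x$ forces $\gamma \in xF(\GG)x$.

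Finally I would run the groupoid-equivalence argument. Writing $\mathscr{H} = F(\GG)\ltimes\partial W_{\GG_+}$, both $E_\Sigma^\infty$ and $\partial X_{\GG_+,x}$ are open in $\partial W_{\GG_+}$ and meet every $\mathscr{H}$-orbit: any eventually directed word $\xi = \gamma_0\alpha$ satisfies $\gamma_0^{-1}\cdot\xi = \alpha \in E_\Sigma^\infty$, and choosing $\gamma \in xF(\GG)r(\xi)$ (possible as $\Gamma$ is connected) gives $\gamma\cdot\xi \in \partial X_{\GG_+,x}$. Thus both are \emph{full} open subsets of $\mathscr{H}^{(0)}$, and the restriction of an \'etale groupoid to a full open subset is equivalent to the whole groupoid, with the set $\{h \in \mathscr{H} : s(h) \in U\}$ serving as the linking equivalence (see \cite{Renault_groupoid_book}); hence $C^*(\mathscr{H}|_{E_\Sigma^\infty})$ and $C^*(\mathscr{H}|_{\partial X_{\GG_+,x}})$ are each Morita equivalent to $C^*(\mathscr{H})$, and therefore to one another. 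Combining with the two identifications above gives $C^*(\GG_+) \sim_{\mathrm{Morita}} C_0(\partial X_{\GG_+,x}) \rtimes \pi_1(\GG,x)$.

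I expect the main obstacle to be the first identification $\mathscr{H}_{\Lambda_{\GG_+}} \cong \mathscr{H}|_{E_\Sigma^\infty}$: one must match the germ/zigzag description of the \cite{LCSCsemigroup} model with honest transformation-groupoid elements, in particular verifying that the germ relation for zigzags corresponds exactly to equality of the associated elements of $F(\GG)$ (using uniqueness of $\Sigma$-normalised forms from Proposition~\ref{prop:reduced representative GG+ word}), and confirming that the two cylinder-set topologies agree. The remaining fullness and equivalence steps are comparatively routine.
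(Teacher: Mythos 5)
Your proposal is correct and follows essentially the same route as the paper: reduce to $\OO(\Lambda_{\GG_+})$ via Theorem~\ref{thm:dgogC* isomorphism}, identify the Ortega--Pardo groupoid model with $(F(\GG)\ltimes\partial W_{\GG_+})|_{E_\Sigma^\infty}$ using Propositions~\ref{prop:F(G) acting on E_Sigma^infty} and \ref{prop:germ equality}, observe that $\pi_1(\GG,x)\ltimes\partial X_{\GG_+,x}$ is the restriction to $\partial X_{\GG_+,x}$, and conclude by showing both subsets are full and open (the paper packages the final step as Lemma~\ref{lemma:restrictions Morita equiv}, proved via weak Kakutani equivalence in the sense of \cite{ample_groupoids} rather than your direct linking-space argument, but this is the same standard fact). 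Even your anticipated ``main obstacle'' matches the paper's actual labour, carried out in Theorem~\ref{thm:semigroup crossed product} and Proposition~\ref{prop:groupoid of germs is a restriction}.
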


To prove this result, we first describe a groupoid model for $C^*(\GG_+)$ in Section~\ref{subsec:C*(G+) groupoid model}, and then in Section~\ref{subsec:groupoid equivalence} we show that the underlying groupoid is equivalent in a suitable sense to the transformation groupoid $\pi_1(\GG, x) \ltimes \partial X_{\GG_+,x}$.

\subsection{Groupoid model for $C^*(\GG_+)$} \label{subsec:C*(G+) groupoid model}

Ortega and Pardo \cite{LCSCsemigroup} give a groupoid model for the Cuntz--Krieger algebra of a countable, finitely aligned LCSC using the language of inverse semigroup actions. In this subsection we analyse the model as it applies to our context, and provide a tractable version of the model in Theorem~\ref{thm:semigroup crossed product}. We begin with a general discussion of the groupoid model from \cite{LCSCsemigroup}.

Let $\Lambda$ be a countable, finitely aligned LCSC. We can associate a semigroup $\SS_\Lambda$ to $\Lambda$ in the following way.

\begin{defin}[\mbox{\cite[Section~2]{LCSCsemigroup}}]
    Let $\Lambda$ be a LCSC. For each $\lambda \in \Lambda$, we define maps
    \[\tau^\lambda \colon s(\lambda)\Lambda \to \lambda\Lambda; \quad \mu \mapsto \lambda\mu, \qquad \text{and} \qquad \sigma^\lambda \colon \lambda\Lambda \to s(\lambda)\Lambda; \quad \lambda\mu \mapsto \mu.\]
    We write $\SS_\Lambda$ for the semigroup generated by $\{\tau^\lambda, \sigma^\lambda : \lambda \in \Lambda\}$ under composition, where a composition is deemed to equal $0$ if it has empty domain.
\end{defin}

The semigroup $\SS_\Lambda$ is an inverse semigroup, called the left inverse hull of $\Lambda$. Theorem~4.16 of \cite{LCSCsemigroup} states that the Cuntz--Krieger algebra of $\Lambda$ is isomorphic to the $C^*$-algebra of what is called the \textit{tight groupoid} of $\SS_\Lambda$. This groupoid is topologically isomorphic to the groupoid of germs of the action of $\SS_\Lambda$ on a certain space $\Lambda_{tight}$ \cite[Lemma~4.9]{LCSCsemigroup}, so $\OO(\Lambda) \cong C^*(\SS_\Lambda \ltimes \Lambda_{tight})$.

We will now describe the space $\Lambda_{tight}$ and the action of  $\SS_\Lambda$ on $\Lambda_{tight}$.

\begin{defin}[\mbox{\cite[Section~3]{LCSCsemigroup}}]
    Let $\Lambda$ be a finitely aligned LCSC. A non-empty subset $H \subseteq \Lambda$ is:
    \begin{enumerate}[(1)]
        \item \textit{hereditary} if for any $\lambda \in H$ and $\mu \in \Lambda$ such that $\mu \leq \lambda$, we have that $\mu \in H$;
        \item \textit{(upwards) directed} if for any $\lambda, \mu \in H$, there exists some $\nu \in H$ such that $\lambda, \mu \leq \nu$; and
        \item \textit{tight} if for any $\lambda \in H$, $\{\mu_1, \dots, \mu_n\} \subseteq \Lambda \setminus H$, and finite exhaustive set $Z$ of $\lambda\Lambda \setminus \bigcup_{i=1}^n \mu_i \Lambda$, we have that $Z \cap H \neq \emptyset$.
    \end{enumerate}
    Write $\Lambda^*$ for the collection of hereditary directed subsets of $\Lambda$, and write $\Lambda_{tight}$ for the collection of tight hereditary directed subsets of $\Lambda$.
\end{defin}

We can turn $\Lambda^*$ and $\Lambda_{tight}$ into topological spaces as follows. For finite sets $X,Y \subseteq \Lambda$, define
\[\MM^{X,Y}= \{H \in \Lambda^* : X \subseteq H\ \text{and}\ Y \cap H = \emptyset \}.\]
We can endow $\Lambda^*$ with a topology generated by open sets $\{\MM^{X,Y} : X,Y \subseteq \Lambda\ \text{finite}\}$, and we give $\Lambda_{tight}$ the subspace topology.

The semigroup $\SS_\Lambda$ acts on $\Lambda^*$ as follows. The domain and range of a generating element $\tau^\lambda \sigma^\mu \in \SS_\Lambda$ (where $\lambda, \mu \in \Lambda$ with $s(\lambda) = s(\mu)$) are $\{H \in \Lambda^* : \mu \in H\}$ and $\{H \in \Lambda^* : \lambda \in H\}$ respectively, and $\tau^\lambda \sigma^\mu$ acts according to the formula
\[ \tau^\lambda \sigma^\mu \cdot H = \{\xi \in \Lambda : \xi \leq  \tau^\lambda \sigma^\mu(\nu)\ \text{for some}\ \nu \in H\ \text{such that}\ \mu \leq \nu\} \]
for $H$ in the domain of $\tau^\lambda \sigma^\mu$. This action restricts to an action on $\Lambda_{tight}$ \cite[Corollary~4.7]{LCSCsemigroup}, whose groupoid of germs has $C^*$-algebra isomorphic to $\OO(\Lambda)$.

We now consider these results in the setting where the category $\Lambda$ is the word category of a directed graph of groups. To avoid cluttered notation, we will write $\SS_{\GG_+}$ instead of $\SS_{\Lambda_{\GG_+}}$ to denote the left inverse hull of the word category of a directed graph of groups $\GG_+$. The following proposition describes the structure of $\SS_{\GG_+}$.

\begin{prop} \label{prop:S_G+ structure}
    Let $\GG_+$ be a directed graph of groups. We have that
    \[\SS_{\GG_+} = \{\tau^\lambda \sigma^\mu : \lambda, \mu \in \Lambda_{\GG_+},\ s(\lambda) = s(\mu)\} \cup \{0\}.\]
    Moreover, for any two nonzero elements $\tau^\lambda \sigma^\mu, \tau^\nu \sigma^\xi \in \SS_{\GG_+}$, we have
    \begin{enumerate}
        \item $\tau^\lambda \sigma^\mu = \tau^\nu \sigma^\xi$ if and only if there is $g \in G_{s(\lambda)}$ such that $\nu = \lambda g$ and $\xi = \mu g$; and
        
        \item
        \[ \tau^\lambda \sigma^\mu \tau^\nu \sigma^\xi =
        \begin{cases}
            \tau^\lambda \sigma^{\xi\eta} & \text{if}\ \mu = \nu \eta\ \text{for some}\ \eta \in \Lambda_{\GG_+}; \\
            \tau^{\lambda\eta} \sigma^\xi & \text{if}\ \nu = \mu \eta\ \text{for some}\ \eta \in \Lambda_{\GG_+}; \\
            0 & \text{otherwise.}
        \end{cases} \]
    \end{enumerate}
\end{prop}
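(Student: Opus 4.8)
The plan is to work entirely with the concrete description of the generators of $\SS_{\GG_+}$ as partial bijections of $\Lambda_{\GG_+}$: the map $\tau^\lambda$ sends $\rho \mapsto \lambda\rho$ on $s(\lambda)\Lambda_{\GG_+}$, and $\sigma^\lambda$ sends $\lambda\rho \mapsto \rho$ on $\lambda\Lambda_{\GG_+}$ (well-defined because $\Lambda_{\GG_+}$ is left cancellative). Every generator is already of the claimed form, since $\tau^\lambda = \tau^\lambda\sigma^{s(\lambda)}$ and $\sigma^\mu = \tau^{s(\mu)}\sigma^\mu$. Hence the first displayed equality will follow once I establish the multiplication formula in part (2): that formula exhibits $\{\tau^\lambda\sigma^\mu : s(\lambda) = s(\mu)\} \cup \{0\}$ as closed under composition, and a subset of an inverse semigroup that contains every generator and is closed under products must be the whole semigroup. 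So I would prove part (2) first, and deduce the structure statement and part (1) afterwards.

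For part (2), the key is to compute the ``inner'' composition $\sigma^\mu\tau^\nu$, since $\tau^\lambda\sigma^\mu\cdot\tau^\nu\sigma^\xi = \tau^\lambda(\sigma^\mu\tau^\nu)\sigma^\xi$, and the identities $\tau^\lambda\tau^\eta = \tau^{\lambda\eta}$ and $\sigma^\eta\sigma^\xi = \sigma^{\xi\eta}$ then finish the calculation. Here is where the word category is essential: $\sigma^\mu\tau^\nu$ is nonzero exactly when $\nu\Lambda_{\GG_+}\cap\mu\Lambda_{\GG_+}\neq\emptyset$, and by Corollary~\ref{coro:word category extensions} this occurs only when $\mu$ and $\nu$ are comparable. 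If $\mu\leq\nu$, I would write $\nu = \mu\eta$ and check directly that $\sigma^\mu\tau^\nu = \tau^\eta$; if $\nu\leq\mu$, write $\mu = \nu\eta$ and check that $\sigma^\mu\tau^\nu = \sigma^\eta$ (both are short computations that use left cancellation to identify the relevant domains); and if $\mu,\nu$ are incomparable, then $\sigma^\mu\tau^\nu = 0$. Substituting these three outcomes and contracting with $\tau^\lambda$ and $\sigma^\xi$ yields precisely the three cases of the stated formula.

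For part (1), the backward direction is a direct substitution: if $\nu = \lambda g$ and $\xi = \mu g$ with $g\in G_{s(\lambda)}$, then invertibility of $g$ gives $\xi\Lambda_{\GG_+} = \mu\Lambda_{\GG_+}$, and evaluating $\tau^{\lambda g}\sigma^{\mu g}$ on a point $\mu\rho$ recovers $\lambda\rho$. For the forward direction, equality of the two partial maps forces equality of their domains, $\mu\Lambda_{\GG_+} = \xi\Lambda_{\GG_+}$, whence $\mu\leq\xi$ and $\xi\leq\mu$; by Proposition~\ref{prop:word category relation} this gives $q(\mu)\leq q(\xi)\leq q(\mu)$, and since $\leq$ is a genuine partial order on $E_\Sigma^*$ we conclude $q(\mu) = q(\xi)$. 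Comparing $\Sigma$-normalised forms then yields $\xi = \mu g$ for some $g\in G_{s(\mu)}$, and evaluating both maps at the point $\mu$ of their common domain (where $\tau^\lambda\sigma^\mu$ returns $\lambda$ and $\tau^\nu\sigma^\xi$ returns $\nu g^{-1}$) forces $\lambda = \nu g^{-1}$, i.e. $\nu = \lambda g$.

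The main obstacle I anticipate is bookkeeping the domains of the partial maps correctly throughout the composition $\sigma^\mu\tau^\nu$, and in particular verifying that the incomparable case genuinely gives $0$ rather than some partial overlap. This is exactly the step where the special structure of the word category does the real work: in a general finitely aligned category the intersection $\mu\Lambda\cap\nu\Lambda$ would be a union $\bigcup_\gamma \gamma\Lambda$ and the product would be a \emph{sum} of terms, whereas Corollary~\ref{coro:word category extensions} (equivalently the single alignment of Corollary~\ref{coro:Lambda singly aligned}) collapses this to a single term and makes the clean trichotomy possible.
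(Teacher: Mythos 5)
Your proof is correct, but it follows a genuinely different route from the paper's. The paper does not compute with partial bijections at all: it first invokes Corollary~\ref{coro:Lambda singly aligned} (single alignment of $\Lambda_{\GG_+}$) in order to apply the general structure theorem \cite[Theorem~3.2]{singly_aligned} for left inverse hulls of singly aligned categories, which delivers in one stroke both the description $\SS_{\GG_+} = \{\tau^\lambda\sigma^\mu\} \cup \{0\}$ and the multiplication rule
\[
\tau^\lambda \sigma^\mu \tau^\nu \sigma^\xi = \tau^{\lambda\sigma^\mu(\gamma)}\sigma^{\xi\sigma^\nu(\gamma)} \quad \text{when } \mu\Lambda_{\GG_+} \cap \nu\Lambda_{\GG_+} = \gamma\Lambda_{\GG_+},
\]
and $0$ otherwise; the only work done in the paper's proof of (2) is to use Corollary~\ref{coro:word category extensions} to reduce to the cases $\nu \leq \mu$ and $\mu \leq \nu$, and then take $\gamma = \mu$ or $\gamma = \nu$ to collapse the general formula to the stated trichotomy. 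Part (1) is likewise quoted from \cite[Proposition~3.3]{singly_aligned}, combined with the observation that the invertible elements of $\Lambda_{\GG_+}$ at the object $s(\lambda)$ are exactly $G_{s(\lambda)}$. Your argument replaces both citations with direct computation: the trichotomy $\sigma^\mu\tau^\nu \in \{\tau^\eta, \sigma^\eta, 0\}$ obtained by tracking domains via left cancellation, the closure argument recovering the structure statement, and the domain-equality-plus-pointwise-evaluation argument for (1), where your detour through $q$, the genuine partial order on $E_\Sigma^*$, and $\Sigma$-normalised forms (Propositions~\ref{prop:word category relation} and \ref{prop:reduced representative GG+ word}) re-derives the invertibility fact the paper simply asserts. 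What the paper's route buys is brevity and a clean reduction to known general theory; what yours buys is a self-contained proof that makes explicit exactly where left cancellation and Corollary~\ref{coro:word category extensions} enter --- and you correctly identify that single alignment is what replaces the finite join of a general finitely aligned category by a single term. One point worth stating explicitly in your write-up of (1): before evaluating $\tau^\nu\sigma^\xi$ at $\mu$ you need $\mu \in \xi\Lambda_{\GG_+}$; this is immediate from $\mu = \xi g^{-1}$ once $\xi = \mu g$ is established, but it should be said.
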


\begin{proof}
    Since $\Lambda_{\GG_+}$ is singly aligned (Corollary~\ref{coro:Lambda singly aligned}), \cite[Theorem~3.2]{singly_aligned} gives that
    \[\SS_{\GG_+} = \{\tau^\lambda \sigma^\mu : \lambda, \mu \in \Lambda_{\GG_+},\ s(\lambda) = s(\mu)\} \cup \{0\},\]
    and that multiplication is described by
    \[ \tau^\lambda \sigma^\mu \tau^\nu \sigma^\xi =
    \begin{cases}
        \tau^{\lambda\sigma^\mu(\gamma)} \sigma^{\xi\sigma^\nu(\gamma)} & \text{if}\ \mu \Lambda_{\GG_+} \cap \nu \Lambda_{\GG_+} = \gamma \Lambda_{\GG_+}; \\
        0 & \text{otherwise.}
    \end{cases} \]
    (Note that in \cite{singly_aligned}, the semigroup $\SS_{\GG_+}$ is denoted as $ZM(\Lambda_{\GG_+})$.) Now, the condition that $\mu \Lambda_{\GG_+} \cap \nu \Lambda_{\GG_+}$ is non-empty is equivalent to $\mu$ and $\nu$ having a common extension, which by Corollary~\ref{coro:word category extensions} is in turn equivalent to $\mu$ and $\nu$ being comparable. In this case, we can consider two (overlapping) subcases: $\nu \leq \mu$ or $\mu \leq \nu$.
    
    If $\nu \leq \mu$, then $\mu = \nu\eta$ for some $\eta \in \Lambda_{\GG_+}$, and we can take $\gamma$ to be $\mu$. Then $\sigma^\mu(\gamma)$ is just $s(\mu)$ and $\sigma^\nu(\gamma) = \eta$, so we get that $\tau^{\lambda\sigma^\mu(\gamma)} \sigma^{\xi\sigma^\nu(\gamma)} = \tau^\lambda \sigma^{\xi\eta}$. If instead $\mu \leq \nu$, then $\nu = \mu\eta$ for some $\eta \in \Lambda_{\GG_+}$, and we can take $\gamma$ to be $\nu$. Then $\sigma^\nu(\gamma)$ is just $s(\nu)$ and $\sigma^\mu(\gamma) = \eta$, so we get that $\tau^{\lambda\sigma^\mu(\gamma)} \sigma^{\xi\sigma^\nu(\gamma)} = \tau^{\lambda\eta} \sigma^\xi$. This proves (2).
    
    Finally, (1) follows from \cite[Proposition~3.3]{singly_aligned}, since invertible elements in $\Lambda_{\GG_+}$ at the object $s(\lambda)$ are exactly elements of the vertex group $G_{s(\lambda)}$.
\end{proof}

\begin{remark}
    If all groups in $\GG_+$ are trivial, then $\SS_{\GG_+}$ is the graph inverse semigroup of $\Gamma_+$ as defined in \cite{graph_semigroup}.
\end{remark}

We now turn to the space $(\Lambda_{\GG_+})_{tight}$ of tight hereditary directed subsets of $\Lambda_{\GG_+}$. The aim of the next series of results is to prove that $(\Lambda_{\GG_+})_{tight}$ is isomorphic to the infinite path space $E_\Sigma^\infty$ of $E_\Sigma$, for any set of transversals $\Sigma$. We prove this in several steps: first we show that $(\Lambda_{\GG_+})_{tight}$ is in bijection with $(E_\Sigma^*)_{tight}$ (Proposition~\ref{prop:Lambda E* tight}); next we show that $(E_\Sigma^*)_{tight}$ is in turn in bijection with $E_\Sigma^\infty$ (Proposition~\ref{prop:E* tight infinite paths}); and finally we show that the bijection between $(\Lambda_{\GG_+})_{tight}$ and $E_\Sigma^\infty$ is a homeomorphism (Proposition~\ref{prop:Lambda tight E infty}).

\begin{prop} \label{prop:Lambda E* tight}
    Let $\GG_+$ be a directed graph of groups, and let $\Sigma$ be a set of transversals for $\GG_+$. The map $q\colon \Lambda_{\GG_+} \to E_\Sigma^*$ induces a bijection $(\Lambda_{\GG_+})_{tight} \to (E_\Sigma^*)_{tight}$, whose inverse is given by taking pre-images under $q$.
\end{prop}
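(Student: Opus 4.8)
The plan is to leverage the fact that, by Proposition~\ref{prop:word category relation}, the map $q$ is both order-preserving and order-reflecting: $\lambda \leq \mu$ in $\Lambda_{\GG_+}$ if and only if $q(\lambda) \leq q(\mu)$ in $E_\Sigma^*$. Since $q$ is surjective and $\leq$ is a genuine partial order on $E_\Sigma^*$, this yields a clean description of the fibres, namely $q(\lambda) = q(\mu)$ exactly when $\lambda \leq \mu$ and $\mu \leq \lambda$; concretely $q^{-1}(q(\lambda)) = \{\lambda g : g \in G_{s(\lambda)}\}$, a single $\leq$-equivalence class. The first thing I would record is the resulting \emph{fibre-saturation} of hereditary sets: if $H$ is hereditary, $\lambda \in H$ and $q(\lambda') = q(\lambda)$, then $\lambda' \leq \lambda$ forces $\lambda' \in H$. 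Consequently $q^{-1}(q(H)) = H$ for every hereditary $H$, while $q(q^{-1}(K)) = K$ for every $K \subseteq E_\Sigma^*$ by surjectivity. Thus, once I verify that $q$ carries tight hereditary directed sets to tight hereditary directed sets and that $q^{-1}$ does likewise, these two identities immediately show that $H \mapsto q(H)$ and $K \mapsto q^{-1}(K)$ are mutually inverse bijections, with inverse given by preimages as claimed.

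The hereditary and directed conditions transfer almost formally from the order properties of $q$. For a hereditary directed $H$, the image $q(H)$ is hereditary because any $\beta \leq q(\lambda)$ ($\lambda \in H$) has a preimage $\tilde\beta \leq \lambda$, hence $\tilde\beta \in H$ and $\beta \in q(H)$; it is directed because $q$ sends an upper bound of $\lambda_1,\lambda_2$ in $H$ to an upper bound of $q(\lambda_1),q(\lambda_2)$ in $q(H)$. Conversely, for hereditary directed $K$, the set $q^{-1}(K)$ is hereditary since $q$ preserves order, and directed since a common upper bound $\beta \in K$ of $q(\lambda_1),q(\lambda_2)$ lifts to some $\nu \in q^{-1}(\beta) \subseteq q^{-1}(K)$, and order-reflection gives $\lambda_1,\lambda_2 \leq \nu$. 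I would dispatch these in a few lines.

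The genuine work, and what I expect to be the main obstacle, is transferring the \emph{tight} condition. The unifying computation is that for any $\rho \in E_\Sigma^*$ with a chosen lift $\tilde\rho$, order-reflection gives $q^{-1}(\rho E_\Sigma^*) = \tilde\rho\,\Lambda_{\GG_+}$ independently of the lift. Hence the excluded region $\lambda\Lambda_{\GG_+} \setminus \bigcup_i \mu_i\Lambda_{\GG_+}$ matches $q(\lambda)E_\Sigma^* \setminus \bigcup_i q(\mu_i)E_\Sigma^*$ under $q$, and the membership data ($\lambda \in H$, each $\mu_i \notin H$) corresponds exactly to ($q(\lambda) \in q(H)$, each $q(\mu_i) \notin q(H)$) using fibre-saturation. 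To move a finite exhaustive set across $q$ I would push it forward (or pull back through a choice of one preimage per element), and check exhaustiveness by the device that a common upper bound in one category produces one in the other: given a witness $\theta$ with $\tilde\gamma \leq \theta$ and $z \leq \theta$, the element $q(\theta)$ witnesses the corresponding intersection, and symmetrically, so single-alignment of $E_\Sigma^*$ need never be invoked directly. Feeding the transferred data into the tightness hypothesis of $H$ (respectively $K$) then returns an element of the exhaustive set lying in $H$ (respectively $K$), which is precisely the desired conclusion. I would finally note that the degenerate case of an empty exhaustive set cannot arise: if $\lambda\Lambda_{\GG_+} \setminus \bigcup_i \mu_i\Lambda_{\GG_+}$ were empty then $\mu_i \leq \lambda$ for some $i$, forcing $\mu_i \in H$ by heredity and contradicting $\mu_i \notin H$. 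This keeps the argument symmetric and valid in both directions.
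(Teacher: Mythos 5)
Your proposal is correct and follows essentially the same route as the paper's proof: both rest on Proposition~\ref{prop:word category relation} to establish fibre-saturation ($H = q^{-1}(q(H))$ for hereditary $H$), transfer the hereditary and directed conditions formally, and then move the tightness data across $q$ by matching the excluded regions $\lambda\Lambda_{\GG_+} \setminus \bigcup_i \mu_i\Lambda_{\GG_+}$ with $q(\lambda)E_\Sigma^* \setminus \bigcup_i q(\mu_i)E_\Sigma^*$ and transporting finite exhaustive sets via lifts and images. Your explicit identity $q^{-1}(\rho E_\Sigma^*) = \tilde\rho\,\Lambda_{\GG_+}$ and the nonemptiness aside are harmless refinements of what the paper does more tersely (it handles the second tightness direction with ``a similar argument'').
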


\begin{proof}
    First note that for any hereditary subset $H \subseteq \Lambda_{\GG_+}$, if $\lambda \in H$ then, by Proposition~\ref{prop:word category relation}, $H$ also contains all $\mu \in \Lambda_{\GG_+}$ for which $q(\mu) = q(\lambda)$; thus $H = q^{-1}(q(H))$. Moreover, $q(H)$ is hereditary, since for any $\alpha \in E_\Sigma^*$ and $\mu \in H$ such that $\alpha \leq q(\mu)$, Proposition~\ref{prop:word category relation} gives that $\alpha \leq \mu$ in $\Lambda_{\GG_+}$ and thus that $\alpha \in H$ since $H$ is hereditary; so we have $\alpha = q(\alpha) \in q(H)$ as required. Likewise, if $K \subseteq E_\Sigma^*$ is hereditary, then its pre-image $q^{-1}(K) \subseteq \Lambda_{\GG_+}$ is also hereditary. So $q$ induces a bijection between the hereditary subsets of $\Lambda_{\GG_+}$ and $E_\Sigma^*$, with the inverse given by taking pre-images under $q$. Now, Proposition~\ref{prop:word category relation} also implies that a subset $H \subseteq \Lambda_{\GG_+}$ is directed if and only if its image $q(H) \subseteq E_\Sigma^*$ is directed, so $q$ in fact induces a bijection between $\Lambda_{\GG_+}^*$ and $(E_\Sigma^*)^*$.
    
    We now show that this bijection restricts to a bijection on the level of tight hereditary directed subsets. First suppose that $H \subseteq \Lambda_{\GG_+}$ is a tight hereditary directed subset; we show that $q(H) \subseteq E_\Sigma^*$ is tight. Take $\alpha \in q(H)$ and $\beta_1, \dots, \beta_n \in E_\Sigma^* \setminus q(H)$, and let $Z \subseteq \alpha E_\Sigma^* \setminus \bigcup_{i=1}^n \beta_i E_\Sigma^*$ be finite exhaustive. Since $H = q^{-1}(q(H))$, if we consider $\alpha, \beta_1, \dots, \beta_n$ as elements of $\Lambda_{\GG_+}$, then $\alpha \in H$ and $\beta_1, \dots, \beta_n \in \Lambda_{\GG_+} \setminus H$. If we also identify $Z$ with its image in $\Lambda_{\GG_+}$, then $Z \subseteq \alpha \Lambda_{\GG_+} \setminus \bigcup_{i=1}^n \beta_i \Lambda_{\GG_+}$ is also finite exhaustive: for any $\lambda \in \alpha \Lambda_{\GG_+} \setminus \bigcup_{i=1}^n \beta_i \Lambda_{\GG_+}$, we have $q(\lambda) \in \alpha E_\Sigma^* \setminus \bigcup_{i=1}^n \beta_i E_\Sigma^*$, so since $Z$ is exhaustive there is some $z \in Z$ such that $q(\lambda)E_\Sigma^* \cap z E_\Sigma^* \neq \emptyset$; but then Proposition~\ref{prop:word category relation} implies that $\lambda \Lambda_{\GG_+} \cup z \Lambda_{\GG_+} \neq \emptyset$ also. Then, since $H$ is tight, we have $Z \cap H \neq \emptyset$, and so $Z \cap q(H) \neq \emptyset$ in $E_\Sigma^*$. Hence $q(H)$ is tight. A similar argument shows that if $K \subseteq E_\Sigma^*$ is a tight hereditary directed subset, then its pre-image $q^{-1}(K) \subseteq \Lambda_{\GG_+}$ is also tight. Hence $q$ induces a bijection $(\Lambda_{\GG_+})_{tight} \to (E_\Sigma^*)_{tight}$, as claimed.
\end{proof}

\begin{prop} \label{prop:E* tight infinite paths}
    Let $\GG_+$ be a countable, row-finite directed graph of groups with no sources, and let $\Sigma$ be a set of transversals for $\GG_+$. For each infinite path $\alpha \in E_\Sigma^\infty$, define a subset $\Delta_\alpha \subseteq E_\Sigma^*$ by $\Delta_\alpha := \{\beta \in E_\Sigma^* : \beta \leq \alpha\}$. Write $\Delta$ for the map sending $\alpha$ to $\Delta_\alpha$. Then $\Delta$ is a bijection $E_\Sigma^\infty \to (E_\Sigma^*)_{tight}$.
\end{prop}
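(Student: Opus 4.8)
The plan is to verify three things: that $\Delta_\alpha$ genuinely lies in $(E_\Sigma^*)_{tight}$ for each $\alpha$ (so that $\Delta$ is well defined), that $\Delta$ is injective, and that $\Delta$ is surjective. Throughout I would use the elementary fact that in the path space $E_\Sigma^*$ two paths with a common extension are comparable under $\leq$, since both must be initial subpaths of that common extension (this is the path-space analogue of Corollary~\ref{coro:word category extensions}). A consequence I will lean on is that any directed subset of $E_\Sigma^*$ is totally ordered by $\leq$.

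For well-definedness, fix $\alpha \in E_\Sigma^\infty$ and write $\alpha = e_1 e_2 \cdots$, so that $\Delta_\alpha$ consists exactly of the finite initial segments $r(\alpha), e_1, e_1 e_2, \ldots$ of $\alpha$; heredity and directedness are then immediate. For tightness, suppose $\beta \in \Delta_\alpha$, $\mu_1, \ldots, \mu_n \in E_\Sigma^* \setminus \Delta_\alpha$, and $Z \subseteq \beta E_\Sigma^* \setminus \bigcup_i \mu_i E_\Sigma^*$ is finite exhaustive. The key observation is that every initial segment $\gamma$ of $\alpha$ with $\beta \leq \gamma$ lies in $\beta E_\Sigma^* \setminus \bigcup_i \mu_i E_\Sigma^*$: indeed $\gamma \in \mu_i E_\Sigma^*$ would give $\mu_i \leq \gamma \leq \alpha$, forcing $\mu_i \in \Delta_\alpha$, a contradiction. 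Taking $\gamma$ to be the initial segment of $\alpha$ of length $\max\{\abs{\beta}, \max_{z \in Z}\abs{z}\}$ and applying exhaustiveness, I obtain $z \in Z$ sharing a common extension with $\gamma$; comparability together with $\abs{z} \leq \abs{\gamma}$ forces $z \leq \gamma \leq \alpha$, so $z \in Z \cap \Delta_\alpha$. Hence $\Delta_\alpha$ is tight.

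Injectivity is straightforward: if $\alpha \neq \alpha'$ then they either have different ranges or first differ at some edge, and in either case some initial segment of one fails to be an initial segment of the other, so $\Delta_\alpha \neq \Delta_{\alpha'}$. The substantive direction is surjectivity. Given $H \in (E_\Sigma^*)_{tight}$, directedness makes $H$ a nonempty chain. I first rule out $H$ having a $\leq$-maximal element $\beta$: applying the no-sources and row-finiteness hypotheses, the one-edge extensions $Z = \{\beta f : f \in E_\Sigma^1,\ r(f) = s(\beta)\}$ form a finite, nonempty, exhaustive subset of $\beta E_\Sigma^*$ (with the list of $\mu_i$ empty), so tightness yields some $\beta f \in H$, contradicting maximality. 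Thus $H$ is an unbounded chain, and heredity gives for each $k \geq 0$ a unique element $\beta_k \in H$ of length $k$ with $\beta_k \leq \beta_{k+1}$; these assemble into a unique infinite path $\alpha \in E_\Sigma^\infty$ whose initial segments are precisely the $\beta_k$, whence $\Delta_\alpha = H$.

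I expect the main obstacle to be the tightness arguments, where the interplay between the tight condition, the finiteness of $Z$, and the row-finite and no-sources hypotheses must be handled carefully. In particular the surjectivity step is the crux, since it is there that the hypotheses on $\GG_+$ are genuinely needed: they guarantee that a tight hereditary directed set extends to a full infinite path rather than terminating at a finite one.
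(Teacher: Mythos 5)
Your proof is correct, and for well-definedness and injectivity it is essentially the paper's argument: the tightness check (pass to a long enough initial segment $\gamma$ of $\alpha$, observe no $\mu_i$ can precede it by heredity, then use exhaustiveness plus comparability-by-length to land some $z \in Z$ below $\gamma$) matches the paper's almost verbatim, modulo your $\abs{z} \leq \abs{\gamma}$ in place of a strict inequality, which still works since comparable paths of equal length coincide. Where you genuinely diverge is surjectivity: the paper does not prove this step at all, instead citing the discussion in \cite[Example~3.31]{LCSCsemigroup} (noting only that row-finiteness and no sources for $\GG_+$ pass to $E_\Sigma$, so $E_\Sigma$ has no singular vertices), whereas you supply a direct, self-contained argument -- a tight hereditary directed set is a chain, and a $\leq$-maximal element $\beta$ is ruled out by applying tightness to the finite nonempty exhaustive set $\{\beta f : f \in E_\Sigma^1,\ r(f) = s(\beta)\}$, after which the unique length-$k$ elements assemble into an infinite path. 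This buys transparency: your version shows exactly where the hypotheses on $\GG_+$ enter (row-finiteness gives finiteness of the one-edge extension set, no sources gives nonemptiness), at the cost of a slightly longer proof than the paper's citation. Two small points worth a remark in a write-up: your application of tightness with the family $\{\mu_1, \dots, \mu_n\}$ empty is legitimate -- the paper's definition quantifies over finite subsets of $\Lambda \setminus H$ and the standard (Exel-style) reading allows $n = 0$ -- but you should say so explicitly; and uniqueness of the length-$k$ element $\beta_k$ comes from directedness (two distinct paths of equal length are incomparable, hence have no common extension), not from heredity as your phrasing suggests.
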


\begin{proof}
    It is easy to check that each $\Delta_\alpha$, $\alpha \in E_\Sigma^\infty$ is hereditary and directed. To see that it is tight, fix paths $\beta \in \Delta_\alpha$ and $\gamma_1, \dots, \gamma_n \in E_\Sigma^* \setminus \Delta_\alpha$, and fix a finite exhaustive subset $Z \subseteq \beta E_\Sigma^* \setminus \bigcup_{i=1}^n \gamma_i E_\Sigma^*$; we will show that $Z \cap \Delta_\alpha \neq \emptyset$. Take $\beta' \in \Delta_\alpha$ such that $\beta \leq \beta'$ and $\abs{\beta'} > \max_{z \in Z} \abs{z}$. Note that $\gamma_i \not\leq \beta'$ for $i=1, \dots, n$ (otherwise, since $\Delta_\alpha$ is hereditary we would have $\gamma_i \in \Delta_\alpha$, contradicting the choice of $\gamma_i$), so $\beta' \in \beta E_\Sigma^* \setminus \bigcup_{i=1}^n \gamma_i E_\Sigma^*$. Then, since $Z$ is exhaustive, there is some $z \in Z$ which has a common extension with $\beta'$. This implies that either $z \leq \beta'$ or $\beta' \leq z$. But the latter is impossible, since we chose $\beta'$ to be longer than any path in $Z$; so we must have $z \leq \beta'$. Finally, since $\Delta_\alpha$ is hereditary, we get that $z \in \Delta_\alpha$, which implies that $Z \cap \Delta_\alpha \neq \emptyset$, as required. Hence each $\Delta_\alpha$, $\alpha \in E_\Sigma^\infty$ is tight, and so $\Delta$ is a map $E_\Sigma^\infty \to (E_\Sigma^*)_{tight}$.
    
    Since every infinite path in $E_\Sigma$ is uniquely determined by the set of its initial subpaths, we get that $\Delta$ is injective. Now, the discussion in \cite[Example~3.31]{LCSCsemigroup} (where our $\Delta_\alpha$ is denoted as $E_\alpha$) shows that every tight hereditary directed subset of $E_\Sigma^*$ is equal to $\Delta_\alpha$ for some $\alpha \in E_\Sigma^\infty$ (note that since $\GG_+$ is row-finite and has no sources, the same is true for $E_\Sigma$, and so $E_\Sigma$ has no singular vertices), so $\Delta$ is also surjective. Thus $\Delta\colon E_\Sigma^\infty \to (E_\Sigma^*)_{tight}$ is a bijection, as claimed.
\end{proof}

\begin{prop} \label{prop:Lambda tight E infty}
    Let $\Delta\colon E_\Sigma^\infty \to (E_\Sigma^*)_{tight}$ be as in Proposition~\ref{prop:E* tight infinite paths}. The map $q^{-1} \circ \Delta \colon E_\Sigma^\infty \to (\Lambda_{\GG_+})_{tight}$ is a homeomorphism.
\end{prop}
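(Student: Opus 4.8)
The plan is to observe that $q^{-1}\circ\Delta$ is already known to be a bijection, being the composite of the bijections supplied by Propositions~\ref{prop:Lambda E* tight} and~\ref{prop:E* tight infinite paths}; hence the only thing left to verify is that it is both continuous and open. Write $\Theta := q^{-1}\circ\Delta$, so that concretely $\Theta(\alpha) = q^{-1}(\Delta_\alpha) = \{\lambda\in\Lambda_{\GG_+} : q(\lambda)\leq\alpha\}$ for $\alpha\in E_\Sigma^\infty$. The computational heart of the argument is a single ``cylinder dictionary'': for $\lambda\in\Lambda_{\GG_+}$ we have $\lambda\in\Theta(\alpha)$ if and only if $q(\lambda)\leq\alpha$, that is, if and only if $\alpha\in q(\lambda)E_\Sigma^\infty$; and, by the definition of $q$, a finite path $\beta\in E_\Sigma^*$ is the $q$-image of its canonical lift $\beta 1_{s(\beta)}\in\Lambda_{\GG_+}$, i.e.\ $q(\beta 1_{s(\beta)}) = \beta$.

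For continuity I would use that the sets $\MM^{X,Y}$ with $X,Y\subseteq\Lambda_{\GG_+}$ finite form a basis for the topology on $\Lambda_{\GG_+}^*$ (they are closed under finite intersection), so it suffices to show that $\Theta^{-1}(\MM^{X,Y}\cap(\Lambda_{\GG_+})_{tight})$ is open for each such $X,Y$. By the dictionary, $\alpha$ lies in this preimage exactly when $q(\lambda)\leq\alpha$ for every $\lambda\in X$ and $q(\mu)\not\leq\alpha$ for every $\mu\in Y$; thus the preimage equals $\bigcap_{\lambda\in X}q(\lambda)E_\Sigma^\infty \cap \bigcap_{\mu\in Y}\bigl(E_\Sigma^\infty\setminus q(\mu)E_\Sigma^\infty\bigr)$. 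Since $X$ and $Y$ are finite, and the topology of $E_\Sigma^\infty$ is generated by cylinders together with their finite unions, intersections and set differences, this is an open set.

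For openness it is enough to check that $\Theta$ carries each basic cylinder $\beta E_\Sigma^\infty$, $\beta\in E_\Sigma^*$, to an open set, since these cylinders form a basis for $E_\Sigma^\infty$. Using the dictionary together with surjectivity of $\Theta$, one computes $\Theta(\beta E_\Sigma^\infty) = \{H\in(\Lambda_{\GG_+})_{tight} : \beta 1_{s(\beta)}\in H\} = \MM^{\{\beta 1_{s(\beta)}\},\emptyset}\cap(\Lambda_{\GG_+})_{tight}$, which is open. Hence $\Theta$ is a continuous open bijection, and therefore a homeomorphism.

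I do not expect a serious obstacle here: the statement is essentially a matching of the two generating families of open sets, and the work is bookkeeping with the definitions of $q$, of the order $\leq$ (via Proposition~\ref{prop:word category relation}), and of the two topologies. The only points requiring a little care are ensuring that complements of cylinders in $E_\Sigma^\infty$ are open --- which is immediate from the ``set differences'' clause in the definition of that topology --- and correctly identifying the canonical lift $\beta 1_{s(\beta)}$ so that the image of a cylinder becomes a single-constraint set $\MM^{\{\beta 1_{s(\beta)}\},\emptyset}$.
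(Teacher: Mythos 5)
Your proof is correct and takes essentially the same approach as the paper: both arguments compute that the preimage of $\MM^{X,Y}$ is the boolean combination $\bigl(\bigcap_{\lambda \in X} q(\lambda)E_\Sigma^\infty\bigr) \setminus \bigl(\bigcup_{\mu \in Y} q(\mu)E_\Sigma^\infty\bigr)$ and then match the two generating families of open sets. Your openness step, using the canonical lift $\beta 1_{s(\beta)}$ so that $\Theta(\beta E_\Sigma^\infty) = \MM^{\{\beta 1_{s(\beta)}\},\emptyset} \cap (\Lambda_{\GG_+})_{tight}$, is just the special case $X = \{\beta 1_{s(\beta)}\}$, $Y = \emptyset$ of the paper's identity.
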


\begin{proof}
    Propositions~\ref{prop:Lambda E* tight} and \ref{prop:E* tight infinite paths} together give that $q^{-1} \circ \Delta$ is a bijection, so it remains to prove that it is continuous and open. Recall that the codomain $(\Lambda_{\GG_+})_{tight}$ has basic open sets of the form
    \[ \MM^{X,Y} = \{H \in (\Lambda_{\GG_+})_{tight} : X \subseteq H\ \text{and}\ Y \cap H = \emptyset\}, \quad X, Y \subseteq \Lambda_{\GG_+}\ \text{finite}. \]
    Fix finite sets $X, Y \subseteq \Lambda_{\GG_+}$. For an infinite path $\alpha \in E_\Sigma^\infty$, the condition $X \subseteq q^{-1}(\Delta_\alpha)$ means that $q(\lambda) \leq \alpha$ for all $\lambda \in X$, and similarly the condition $Y \cap q^{-1}(\Delta_\alpha) = \emptyset$ means that $q(\mu) \not\leq \alpha$ for all $\mu \in Y$. So
    \begin{align*}
        \Delta^{-1}(q(\MM^{X,Y})) &= \{\alpha \in E_\Sigma^\infty : q(\lambda) \leq \alpha\ \text{for all}\ \lambda \in X,\ \text{and}\ q(\mu) \not\leq \alpha\ \text{for all}\ \mu \in Y\} \\
        &= \left(\bigcap_{\lambda \in X} q(\lambda) E_\Sigma^\infty\right) \setminus \left(\bigcup_{\mu \in Y} q(\mu) E_\Sigma^\infty\right),
    \end{align*}
    which is a basic open set in $E_\Sigma^\infty$, and moreover, all basic open sets in $E_\Sigma^\infty$ have this form. This implies that $q^{-1} \circ \Delta$ is continuous and open, and hence is a homeomorphism.
\end{proof}

We are now ready to prove the main result of this subsection, which gives a more tangible groupoid model for a directed graph of groups $C^*$-algebra.

\begin{thm} \label{thm:semigroup crossed product}
    Let $\GG_+$ be a countable, row-finite directed graph of groups with no sources, and let $\Sigma$ be a set of transversals for $\GG_+$. Then $C^*(\GG_+) \cong C^*(\SS_{\GG_+} \ltimes E_\Sigma^\infty)$, where $\SS_{\GG_+} \ltimes E_\Sigma^\infty$ is the groupoid of germs for the action of $\SS_{\GG_+}$ on $E_\Sigma^\infty$ given by
    \[ \tau^\lambda \sigma^\mu \cdot \alpha = \lambda \mu^{-1} \cdot \alpha, \quad \text{for}\ \alpha\ \text{such that}\ q(\mu) \leq \alpha, \]
    where in the right hand side the action of $\lambda \mu^{-1} \in F(\GG)$ on $\alpha \in E_\Sigma^\infty$ is as in Proposition~\ref{prop:F(G) acting on E_Sigma^infty}.
\end{thm}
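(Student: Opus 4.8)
\section*{Proof plan}

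The plan is to assemble the theorem from the results already in hand, so that the only genuine work is identifying a single transported action. By Theorem~\ref{thm:dgogC* isomorphism} we have $C^*(\GG_+) \cong \OO(\Lambda_{\GG_+})$, and by Corollary~\ref{coro:Lambda singly aligned} the word category $\Lambda_{\GG_+}$ is singly aligned, hence finitely aligned; it is also countable since $\GG_+$ is. Thus the Ortega--Pardo model from \cite{LCSCsemigroup} applies and gives $\OO(\Lambda_{\GG_+}) \cong C^*(\SS_{\GG_+} \ltimes (\Lambda_{\GG_+})_{tight})$. It then remains to identify the $C^*$-algebra of this germ groupoid with $C^*(\SS_{\GG_+} \ltimes E_\Sigma^\infty)$ for the action in the statement.

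First I would transport the Ortega--Pardo action along the homeomorphism $\Theta := q^{-1}\circ\Delta \colon E_\Sigma^\infty \to (\Lambda_{\GG_+})_{tight}$ of Proposition~\ref{prop:Lambda tight E infty}, under which $\Theta(\alpha) = \{\zeta \in \Lambda_{\GG_+} : q(\zeta) \leq \alpha\}$. Setting $s \cdot \alpha := \Theta^{-1}(s \cdot \Theta(\alpha))$ for $s \in \SS_{\GG_+}$ defines an action of $\SS_{\GG_+}$ on $E_\Sigma^\infty$ (being conjugate to a genuine action), and $\Theta$ is $\SS_{\GG_+}$-equivariant by construction. Hence $[s,\alpha] \mapsto [s, \Theta(\alpha)]$ is an isomorphism of topological groupoids $\SS_{\GG_+}\ltimes E_\Sigma^\infty \to \SS_{\GG_+} \ltimes (\Lambda_{\GG_+})_{tight}$, which induces the desired $C^*$-isomorphism. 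The task is then reduced to checking that this transported action coincides with the explicit formula given in the statement.

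The heart of the argument is this verification. For a nonzero generator $\tau^\lambda\sigma^\mu$, its Ortega--Pardo domain $\{H : \mu \in H\}$ pulls back under $\Theta$ to $\{\alpha : q(\mu) \leq \alpha\}$, matching the stated domain. Fix such an $\alpha$ and write $\mu = q(\mu)g_\mu$ and $\alpha = q(\mu)\beta$ with $g_\mu \in G_{s(\mu)}$ and $\beta \in E_\Sigma^\infty$. Applying the Ortega--Pardo formula to $H = \Theta(\alpha)$ and parametrising $\nu = \mu\rho$ gives
\[ \tau^\lambda\sigma^\mu \cdot \Theta(\alpha) = \{\zeta \in \Lambda_{\GG_+} : \zeta \leq \lambda\rho\ \text{for some}\ \rho\ \text{with}\ q(\mu\rho) \leq \alpha\}. \]
I would show this equals $\Theta(\lambda\mu^{-1}\cdot\alpha) = \{\zeta : q(\zeta) \leq \lambda\mu^{-1}\cdot\alpha\}$, where $\lambda\mu^{-1}\cdot\alpha$ is computed by Proposition~\ref{prop:F(G) acting on E_Sigma^infty} as the $\Sigma$-normalisation of $\lambda g_\mu^{-1}\beta$. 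Via Proposition~\ref{prop:word category relation} this reduces to an equality of down-sets in $E_\Sigma^*$: that the prefixes $q(\lambda\rho)$, as $\rho$ ranges over words with $q(\mu\rho)\leq\alpha$, exhaust exactly the finite initial segments of $\lambda\mu^{-1}\cdot\alpha$.

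The main obstacle, and the only real content beyond bookkeeping, is controlling the $\Sigma$-normalisation at the junction where the group element $g_\mu^{-1}$ (equivalently $g_\lambda g_\mu^{-1}$, writing $\lambda = q(\lambda)g_\lambda$) meets the tail $\beta$. I would handle this by exploiting the identities $q(\mu\rho) = q(\mu)\,q(g_\mu\rho)$ and $q(\lambda\rho) = q(\lambda)\,q(g_\lambda\rho)$: since $q(\mu)$ and $q(\lambda)$ are already $\Sigma$-normalised, the normalisation relation $\alpha_e(g)e = e\,\alpha_{\overline{e}}(g)$ only propagates correction terms forward (toward the source), leaving these prefixes fixed. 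Consequently left-multiplication by a vertex-group element intertwines initial-segment truncation with the $F(\GG)$-action, which yields both inclusions and hence the set equality. Finally I would remark that the formula is well defined, i.e.\ independent of the representation $\tau^\lambda\sigma^\mu = \tau^{\lambda g}\sigma^{\mu g}$ from Proposition~\ref{prop:S_G+ structure}(1), since $(\lambda g)(\mu g)^{-1} = \lambda\mu^{-1}$ in $F(\GG)$ and $q(\mu g) = q(\mu)$.
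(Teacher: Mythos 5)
Your proposal is correct and follows essentially the same route as the paper's proof: compose Theorem~\ref{thm:dgogC* isomorphism} with the Ortega--Pardo isomorphism $\OO(\Lambda_{\GG_+}) \cong C^*(\SS_{\GG_+} \ltimes (\Lambda_{\GG_+})_{tight})$, transport the action along the homeomorphism $q^{-1}\circ\Delta$ of Proposition~\ref{prop:Lambda tight E infty}, and verify that the pulled-back action formula agrees with the stated one. The only difference is one of detail: where the paper asserts in a single sentence that $\xi \leq \tau^\lambda\sigma^\mu(\nu)$ for some $\nu$ with $q(\mu) \leq q(\nu) \leq \alpha$ is equivalent to $q(\xi) \leq \lambda\mu^{-1}\cdot\alpha$, you correctly spell out the underlying normalisation bookkeeping (via $q(\mu\rho) = q(\mu)\,q(g_\mu\rho)$ and the forward-only propagation of corrections), together with a harmless extra check of well-definedness over representatives $\tau^{\lambda g}\sigma^{\mu g}$.
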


\begin{proof}
    From Theorem~\ref{thm:dgogC* isomorphism} we know that $C^*(\GG_+) \cong \OO(\Lambda_{\GG_+})$, and \cite[Section~4.4]{LCSCsemigroup} gives that $\OO(\Lambda_{\GG_+}) \cong C^*(\SS_{\GG_+} \ltimes (\Lambda_{\GG_+})_{tight})$. Proposition~\ref{prop:Lambda tight E infty} says that the map $q^{-1} \circ \Delta \colon E_\Sigma^\infty \to (\Lambda_{\GG_+})_{tight}$ is a homeomorphism, and so it remains to show that the induced action of $\SS_{\GG_+}$ on $E_\Sigma^\infty$ has the formula given in the theorem statement (this would also imply that the formula defines a legitimate action).
    
    Recall that the action of $\SS_{\GG_+}$ on $(\Lambda_{\GG_+})_{tight}$ is given by
    \[ \tau^\lambda \sigma^\mu \cdot H = \{\xi \in \Lambda_{\GG_+} : \xi \leq \tau^\lambda \sigma^\mu(\nu)\ \text{for some}\ \nu \in H\ \text{such that}\ \mu \leq \nu\} \]
    for $H \in (\Lambda_{\GG_+})_{tight}$ such that $\mu \in H$. Pulling back through the homeomorphism $q^{-1}\circ \Delta$, we get that for $\alpha \in E_\Sigma^\infty$ such that $q(\mu) \leq \alpha$,
    \begin{align*}
        \tau^\lambda \sigma^\mu \cdot q^{-1}(\Delta_\alpha) &= \{\xi \in \Lambda_{\GG_+} : \xi \leq \tau^\lambda \sigma^\mu(\nu)\ \text{for some}\ \nu \in q^{-1}(\Delta_\alpha)\ \text{such that}\ \mu \leq \nu\} \\
        &= \{\xi \in \Lambda_{\GG_+} : \xi \leq \tau^\lambda \sigma^\mu(\nu)\ \text{for some}\ \nu \in \Lambda_{\GG_+}\ \text{such that}\ q(\mu) \leq q(\nu) \leq \alpha\}.
    \end{align*}
    But note that the condition that $\xi \leq \tau^\lambda \sigma^\mu(\nu)$ for some $\nu \in \Lambda_{\GG_+}$ with $q(\mu) \leq q(\nu) \leq \alpha$ is equivalent to the condition that $q(\xi) \leq \lambda\mu^{-1} \cdot \alpha$, so $\tau^\lambda \sigma^\mu \cdot q^{-1}(\Delta_\alpha) = q^{-1}(\Delta_{\lambda\mu^{-1}\cdot \alpha})$. Thus the action of $\SS_{\GG_+}$ on $E_\Sigma^\infty$ induced from the action on $(\Lambda_{\GG_+})_{tight}$ is given by $\tau^\lambda \sigma^\mu \cdot \alpha = \lambda\mu^{-1} \cdot \alpha$, as claimed.
\end{proof}

To finish this subsection, we study the groupoid of germs $\SS_{\GG_+} \ltimes E_\Sigma^\infty$ from Theorem~\ref{thm:semigroup crossed product}. Recall that the groupoid $\SS_{\GG_+} \ltimes E_\Sigma^\infty$ consists of equivalence classes of elements in $\SS_{\GG_+} * E_\Sigma^\infty = \{(\tau^\lambda \sigma^\mu, \alpha) \in \SS_{\GG_+} \times E_\Sigma^\infty : s(\lambda) = s(\mu)\ \text{and}\ q(\mu) \leq \alpha\}$. We now give a concrete description of the equivalence in our context.

\begin{prop} \label{prop:germ equality}
    Let $(\tau^\lambda \sigma^\mu, \alpha), (\tau^{\lambda'} \sigma^{\mu'}, \beta) \in \SS_{\GG_+} * E_\Sigma^\infty$. The germs $[\tau^\lambda \sigma^\mu, \alpha]$ and $[\tau^{\lambda'} \sigma^{\mu'}, \beta]$ are the same if and only if $\alpha = \beta$ and (assuming without loss of generality that $\mu \leq \mu'$) there exists $\eta \in \Lambda_{\GG_+}$ such that $\lambda' = \lambda \eta$ and $\mu' = \mu \eta$.
\end{prop}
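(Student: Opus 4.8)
The plan is to unwind the definition of the groupoid of germs. The idempotents of $\SS_{\GG_+}$ are exactly the elements $\tau^\nu\sigma^\nu$ with $\nu\in\Lambda_{\GG_+}$ (being the elements $(\tau^\lambda\sigma^\mu)^*(\tau^\lambda\sigma^\mu)$), and by Theorem~\ref{thm:semigroup crossed product} the domain of $\tau^\nu\sigma^\nu$ is $\{\gamma\in E_\Sigma^\infty : q(\nu)\leq\gamma\}$. Hence $[\tau^\lambda\sigma^\mu,\alpha]$ and $[\tau^{\lambda'}\sigma^{\mu'},\beta]$ agree precisely when $\alpha=\beta$ and there exists $\nu\in\Lambda_{\GG_+}$ with $q(\nu)\leq\alpha$ and $\tau^\lambda\sigma^\mu\,\tau^\nu\sigma^\nu = \tau^{\lambda'}\sigma^{\mu'}\,\tau^\nu\sigma^\nu$. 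I would first record that the constraints $q(\mu)\leq\alpha$ and $q(\mu')\leq\alpha$ built into $\SS_{\GG_+}* E_\Sigma^\infty$ make $q(\mu)$ and $q(\mu')$ comparable initial subpaths of $\alpha$, so by Proposition~\ref{prop:word category relation} either $\mu\leq\mu'$ or $\mu'\leq\mu$; by the evident symmetry I may assume $\mu\leq\mu'$ and write $\mu'=\mu\eta$.

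For the backward direction I would take the witnessing idempotent to be $\tau^{\mu'}\sigma^{\mu'}$ (legitimate since $q(\mu')\leq\alpha$) and compute both products using Proposition~\ref{prop:S_G+ structure}(2): since $\mu'=\mu\eta$ one gets $\tau^\lambda\sigma^\mu\,\tau^{\mu'}\sigma^{\mu'}=\tau^{\lambda\eta}\sigma^{\mu'}=\tau^{\lambda'}\sigma^{\mu'}$ (using $\lambda'=\lambda\eta$), while $\tau^{\lambda'}\sigma^{\mu'}\,\tau^{\mu'}\sigma^{\mu'}=\tau^{\lambda'}\sigma^{\mu'}$ because an idempotent acts as a right identity on its own support. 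The two products coincide, giving the germ equality.

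The forward direction is where the work lies. Starting from an arbitrary witnessing idempotent $\tau^{\nu_0}\sigma^{\nu_0}$, I would first \emph{enlarge} it: taking $\nu$ to be the longest of $\nu_0,\mu,\mu'$ (whose $q$-images are all prefixes of $\alpha$, hence totally ordered), a short computation with Proposition~\ref{prop:S_G+ structure} shows $\tau^a\sigma^a\,\tau^b\sigma^b=\tau^c\sigma^c$ where $c$ is the longer of $a,b$, so $\tau^\nu\sigma^\nu$ lies below $\tau^{\nu_0}\sigma^{\nu_0}$ in the natural order and therefore still witnesses the equality, while now $\mu,\mu'\leq\nu$. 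Writing $\nu=\mu\zeta=\mu'\rho$ (so $\zeta=\eta\rho$), the two products become $\tau^{\lambda\zeta}\sigma^\nu$ and $\tau^{\lambda'\rho}\sigma^\nu$ by Proposition~\ref{prop:S_G+ structure}(2); equating them and invoking Proposition~\ref{prop:S_G+ structure}(1) yields $g\in G_{s(\nu)}$ with $\nu=\nu g$ and $\lambda'\rho=\lambda\zeta g$. Left cancellativity forces $g=1_{s(\nu)}$, leaving $\lambda'\rho=\lambda\eta\rho$. The main obstacle is that concluding $\lambda'=\lambda\eta$ requires cancelling $\rho$ on the \emph{right}, which is unavailable in a general left cancellative category; I would resolve this by recalling that $\Lambda_{\GG_+}$ is a subcategory of the groupoid $F(\GG)$, in which $\rho$ is invertible, so right multiplication by $\rho^{-1}$ gives $\lambda'=\lambda\eta$. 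Together with $\mu'=\mu\eta$ this produces the required $\eta$ and completes the proof.
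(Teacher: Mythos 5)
Your proof is correct and follows essentially the same route as the paper's: unwind the germ relation using the fact that the nonzero idempotents of $\SS_{\GG_+}$ are the $\tau^\xi\sigma^\xi$ with domain $\{\alpha : q(\xi)\leq\alpha\}$, verify the backward direction with the witness $\tau^{\mu'}\sigma^{\mu'}$, and in the forward direction use comparability of the prefixes $q(\mu), q(\mu')$ of $\alpha$, pass to a witnessing idempotent dominating $\mu'$, compute with Proposition~\ref{prop:S_G+ structure}, and cancel. Your two small deviations are both sound: enlarging the idempotent in one step to the maximum of $\nu_0,\mu,\mu'$ (using that multiplying the witnessing equation on the right by a smaller idempotent preserves it) replaces the paper's case-by-case substitution of $\xi$ by $\mu'$, and your explicit justification of the final right cancellation of $\rho$ via the embedding $\Lambda_{\GG_+}\subseteq F(\GG)$ is in fact more careful than the paper, which only says ``using cancellativity again'' even though $\Lambda_{\GG_+}$ is a priori only left cancellative.
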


\begin{proof}
    By definition, we have that the germs $[\tau^\lambda \sigma^\mu, \alpha]$ and $[\tau^{\lambda'} \sigma^{\mu'}, \beta]$ are the same if and only if $\alpha = \beta$ and there is an idempotent $e \in \SS_{\GG_+}$ such that $\alpha \in D_e$ and $\tau^\lambda \sigma^\mu e = \tau^{\lambda'} \sigma^{\mu'} e$. Note that $e$ must be non-zero, since otherwise $D_e$ would be empty. Now, \cite[Proposition~3.5(1)]{singly_aligned} gives that the non-zero idempotents of $\SS_{\GG_+}$ are exactly elements of the form $\tau^\xi \sigma^\xi$ for some $\xi \in \Lambda_{\GG_+}$, and we have that $\alpha \in D_{\tau^\xi \sigma^\xi}$ if and only if $q(\xi) \leq \alpha$. So the germs $[\tau^\lambda \sigma^\mu, \alpha]$ and $[\tau^{\lambda'} \sigma^{\mu'}, \beta]$ are the same if and only if $\alpha = \beta$ and there is $\xi \in \Lambda_{\GG_+}$ such that $q(\xi) \leq \alpha$ and $\tau^\lambda \sigma^\mu \tau^\xi \sigma^\xi = \tau^{\lambda'} \sigma^{\mu'} \tau^\xi \sigma^\xi$.
    
    First suppose that there is $\eta \in \Lambda_{\GG_+}$ such that $\lambda' = \lambda \eta$ and $\mu' = \mu \eta$. Then Proposition~\ref{prop:S_G+ structure} gives that $\tau^\lambda \sigma^\mu \tau^{\mu'}\sigma^{\mu'} = \tau^{\lambda'} \sigma^{\mu'} = \tau^{\lambda'} \sigma^{\mu'} \tau^{\mu'}\sigma^{\mu'}$, so we can take $\xi = \mu'$ to see that the germs $[\tau^\lambda \sigma^\mu, \alpha]$ and $[\tau^{\lambda'} \sigma^{\mu'}, \beta]$ are the same.
    
    Now suppose that the germs $[\tau^\lambda \sigma^\mu, \alpha]$ and $[\tau^{\lambda'} \sigma^{\mu'}, \beta]$ are the same. Note that since $\alpha = \beta$, we must have both $q(\mu) \leq \alpha$ and $q(\mu') \leq \alpha$. This forces $q(\mu)$ and $q(\mu')$ to be comparable, since they are both initial subpaths of $\alpha$. Assume without loss of generality that $q(\mu) \leq q(\mu')$; then Proposition~\ref{prop:word category relation} gives that $\mu \leq \mu'$, so there is some $\eta \in \Lambda_{\GG_+}$ such that $\mu' = \mu \eta$.
    
    It is left to show that $\lambda' = \lambda \eta$. Let $\xi \in \Lambda_{\GG_+}$ be such that $q(\xi) \leq \alpha$ and $\tau^\lambda \sigma^\mu \tau^\xi \sigma^\xi = \tau^{\lambda'} \sigma^{\mu'} \tau^\xi \sigma^\xi$. Since $q(\xi)$, $q(\mu)$ and $q(\mu')$ are all subpaths of $\alpha$, we have that $q(\xi)$ is comparable with both $q(\mu)$ and $q(\mu')$. Hence by Proposition~\ref{prop:word category relation} we have that $\xi$ is comparable with both $\mu$ and $\mu'$.
    
    We now reduce to the case where $(\mu \leq)\ \mu' \leq \xi$. Indeed, suppose that $\xi \leq \mu'$, so $\mu' = \xi\nu$ for some $\nu \in \Lambda_{\GG_+}$. Then $\sigma^\xi(\mu') = \nu$, and $\tau^\xi \sigma^\xi \tau^{\mu'} \sigma^{\mu'} = \tau^{\xi\nu} \sigma^{\mu'} = \tau^{\mu'} \sigma^{\mu'}$. This means that
    \begin{align*}
        \tau^\lambda \sigma^\mu \tau^{\mu'} \sigma^{\mu'} &= \tau^\lambda \sigma^\mu \tau^\xi \sigma^\xi \tau^{\mu'} \sigma^{\mu'} \\
        &= \tau^{\lambda'} \sigma^{\mu'} \tau^\xi \sigma^\xi \tau^{\mu'} \sigma^{\mu'} \\
        &= \tau^{\lambda'} \sigma^{\mu'} \tau^{\mu'} \sigma^{\mu'}.
    \end{align*}
    We also have that $\alpha \in D_{\tau^{\mu'} \sigma^{\mu'}}$, so $\tau^{\mu'} \sigma^{\mu'}$ is also an idempotent in $\SS_{\GG_+}$ with the required properties. Hence we can replace $\xi$ with $\mu'$, so that now $\mu' \leq \xi$.
    
    Finally we prove the statement in the case that $(\mu \leq)\ \mu' \leq \xi$. Let $\nu, \nu' \in \Lambda_{\GG_+}$ be such that $\xi = \mu \nu = \mu' \nu'$. Then $\tau^\lambda \sigma^\mu \tau^\xi \sigma^\xi = \tau^{\lambda\nu} \sigma^\xi$, and $\tau^{\lambda'} \sigma^{\mu'} \tau^\xi \sigma^\xi = \tau^{\lambda'\nu'} \sigma^\xi$. So the assumption on $\xi$ implies that $\tau^{\lambda\nu} \sigma^\xi = \tau^{\lambda'\nu'} \sigma^\xi$, and then \cite[Proposition~3.3]{singly_aligned} gives that $\lambda\nu = \lambda'\nu'$. Now, $\mu\nu = \xi = \mu'\nu' = \mu \eta \nu'$, so by cancellativity of $\Lambda_{\GG_+}$ we have that $\nu = \eta \nu'$. So $\lambda' \nu' = \lambda \nu = \lambda \eta \nu'$, and using cancellativity again gives that $\lambda' = \lambda \eta$, as required.
\end{proof}

\subsection{Groupoid equivalence} \label{subsec:groupoid equivalence}

We now use the groupoid model for $C^*(\GG_+)$ obtained in Theorem~\ref{thm:semigroup crossed product} to prove Theorem~\ref{thm:morita equiv}, that $C^*(\GG_+)$ is Morita equivalent to the crossed product $C_0(\partial X_{\GG_+, x}) \rtimes \pi_1(\GG, x)$. Our strategy is to show that the underlying groupoids of the two $C^*$-algebras in question, namely the groupoid of germs $\SS_{\GG_+} \ltimes E_\Sigma^\infty$ and the transformation groupoid $\pi_1(\GG, x) \ltimes \partial X_{\GG_+,x}$, are equivalent in a suitable sense so that we have a Morita equivalence on the level of $C^*$-algebras.

We begin with a discussion of groupoid equivalence. In \cite{Renault}, Renault introduced the notion of groupoid equivalence: roughly speaking, two Hausdorff \'etale groupoids $\mathscr{G}$ and $\mathscr{H}$ are equivalent if they act in a compatible way on some space. For us, the important property is that equivalent groupoids have Morita equivalent $C^*$-algebras (\cite[Theorem~2.8]{Renault_groupoid}).

Rather than working with this notion of groupoid equivalence directly, we will consider the notion of \textit{weak Kakutani equivalence} for groupoids, which was studied in \cite{ample_groupoids}. Recall that a subset $X$ of the unit space of a groupoid $\mathscr{G}$ is called \textit{full} if it meets the orbit of every unit in $\mathscr{G}$. Two Hausdorff \'etale groupoids are called \textit{weakly Kakutani equivalent} if there are full open subsets $U \subseteq \mathscr{G}^{(0)}$ and $V \subseteq \mathscr{H}^{(0)}$ such that $\mathscr{G}|_U \cong \mathscr{H}|_V$ \cite[Definition~3.8]{ample_groupoids}. Weak Kakutani equivalence implies equivalence in the sense of Renault \cite[Proposition~3.10]{ample_groupoids}.

We combine these concepts in the following key lemma, which we will use to prove Theorem~\ref{thm:morita equiv}.

\begin{lemma} \label{lemma:restrictions Morita equiv}
    Let $\mathscr{G}$ be a Hausdorff \'etale groupoid, and suppose that $U, V \subseteq \mathscr{G}^{(0)}$ are full open subsets. Then $C^*(\mathscr{G}|_U)$ and $C^*(\mathscr{G}|_V)$ are Morita equivalent $C^*$-algebras.
\end{lemma}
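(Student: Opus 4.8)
The plan is to route everything through the ambient groupoid $\mathscr{G}$ together with the notion of weak Kakutani equivalence recalled above, and then to invoke the transitivity of Morita equivalence. The key observation I would isolate is the following: whenever $W \subseteq \mathscr{G}^{(0)}$ is full and open, the restricted groupoid $\mathscr{G}|_W$ is weakly Kakutani equivalent to $\mathscr{G}$ itself. Before using this I would record that $\mathscr{G}|_W = r^{-1}(W) \cap s^{-1}(W)$ is an open subgroupoid of $\mathscr{G}$, so it inherits a Hausdorff \'etale topology and $C^*(\mathscr{G}|_W)$ is defined; this is routine since restriction to an open subset of the unit space preserves both the Hausdorff and the \'etale properties.

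To prove the key observation I would exhibit the pair of full open subsets demanded by \cite[Definition~3.8]{ample_groupoids} directly. On the $\mathscr{G}$ side take $W \subseteq \mathscr{G}^{(0)}$, which is full by hypothesis; on the $\mathscr{G}|_W$ side take the \emph{entire} unit space $(\mathscr{G}|_W)^{(0)} = W$, which is trivially full in $\mathscr{G}|_W$ because every unit lies in its own orbit. The required isomorphism is then simply the identity, since $\mathscr{G}|_W = (\mathscr{G}|_W)|_W$. Hence $\mathscr{G}$ and $\mathscr{G}|_W$ are weakly Kakutani equivalent.

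Applying this with $W = U$ and again with $W = V$, I conclude that both $\mathscr{G}|_U$ and $\mathscr{G}|_V$ are weakly Kakutani equivalent to $\mathscr{G}$. By \cite[Proposition~3.10]{ample_groupoids} each is therefore equivalent to $\mathscr{G}$ in the sense of Renault, and so by \cite[Theorem~2.8]{Renault_groupoid} both $C^*(\mathscr{G}|_U)$ and $C^*(\mathscr{G}|_V)$ are Morita equivalent to $C^*(\mathscr{G})$. Finally, since Morita equivalence is an equivalence relation on $C^*$-algebras, transitivity gives that $C^*(\mathscr{G}|_U)$ and $C^*(\mathscr{G}|_V)$ are Morita equivalent, which is the assertion of the lemma.

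The only genuine content in this argument lives in the cited chain ``weak Kakutani equivalence $\Rightarrow$ Renault equivalence $\Rightarrow$ Morita equivalence of $C^*$-algebras'', and that chain has already been supplied by \cite{ample_groupoids} and \cite{Renault_groupoid}. Consequently I do not expect any substantial analytic obstacle: the entire argument reduces to the trivial facts that a groupoid's full unit space meets every orbit and that $\mathscr{G}|_W = (\mathscr{G}|_W)|_W$, combined with transitivity of Morita equivalence. The one point I would take care to state cleanly (rather than a difficulty \emph{per se}) is the passage through $\mathscr{G}$ itself, since $U$ and $V$ need not intersect, so a direct comparison of $\mathscr{G}|_U$ and $\mathscr{G}|_V$ is unavailable and the common ``bridge'' $\mathscr{G}$ is essential.
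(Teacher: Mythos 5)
Your proposal is correct and follows essentially the same route as the paper: both establish that $\mathscr{G}|_U$ and $\mathscr{G}|_V$ are weakly Kakutani equivalent to $\mathscr{G}$ (via the same choice of full open subsets), then pass through Renault equivalence and \cite[Theorem~2.8]{Renault_groupoid}. The only cosmetic difference is that you apply transitivity at the level of Morita equivalence of $C^*$-algebras, whereas the paper applies transitivity of Renault equivalence at the groupoid level before invoking the theorem once; the two are interchangeable here.
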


\begin{proof}
    Note that $\mathscr{G}|_U$ is weakly Kakutani equivalent to $\mathscr{G}$ (take $U$ to be the full open subset for both groupoids), and similarly $\mathscr{G}|_V$ is also weakly Kakutani equialent to $\mathscr{G}$. Then \cite[Proposition~3.10]{ample_groupoids} implies that $\mathscr{G}|_U$ and $\mathscr{G}|_V$ are both equivalent to $\mathscr{G}$ in the sense of Renault. Since equivalence in the sense of Renault is an equivalence relation (see e.g.\ \cite[p.\ 6]{Renault_groupoid}), we get that $\mathscr{G}|_U$ and $\mathscr{G}|_V$ are equivalent, and hence their $C^*$-algebras are Morita equivalent by \cite[Theorem~2.8]{Renault_groupoid}.
\end{proof}

\begin{prop} \label{prop:groupoid of germs is a restriction}
    Let $\GG_+$ be a countable, row-finite directed graph of groups with no sources, and let $\Sigma$ be a set of transversals for $\GG_+$. The groupoid of germs $\SS_{\GG_+} \ltimes E_\Sigma^\infty$ is topologically isomorphic to the groupoid $(F(\GG) \ltimes \partial W_{\GG_+}) |_{E_\Sigma^\infty}$.
\end{prop}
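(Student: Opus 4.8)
The plan is to write down the only reasonable candidate isomorphism and then read off each of its properties from the three structural results already in hand. Define
\[ \Theta \colon \SS_{\GG_+} \ltimes E_\Sigma^\infty \to (F(\GG) \ltimes \partial W_{\GG_+})|_{E_\Sigma^\infty}, \qquad \Theta([\tau^\lambda \sigma^\mu, \alpha]) = (\lambda\mu^{-1}, \alpha). \]
By Proposition~\ref{prop:F(G) acting on E_Sigma^infty}, for $\alpha \in E_\Sigma^\infty$ the element $\lambda\mu^{-1} \cdot \alpha$ lies again in $E_\Sigma^\infty$ exactly when $q(\mu) \leq \alpha$, which is precisely the condition for $(\tau^\lambda\sigma^\mu,\alpha)$ to represent a germ; hence $(\lambda\mu^{-1},\alpha)$ really is a morphism of the restriction groupoid. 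The same proposition, read in the other direction, says that every morphism of $(F(\GG) \ltimes \partial W_{\GG_+})|_{E_\Sigma^\infty}$ is a pair $(\gamma,\alpha)$ with $\alpha \in E_\Sigma^\infty$ and $\gamma\cdot\alpha \in E_\Sigma^\infty$, so $\gamma = \lambda\mu^{-1}$ for some $\lambda,\mu \in \Lambda_{\GG_+}$ with $q(\mu)\leq\alpha$; thus $\Theta$ is surjective.

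The delicate point is that $\Theta$ collapses the pair $(\lambda,\mu)$ to the single element $\lambda\mu^{-1}$, and I must check this matches the germ equivalence in both directions, which is exactly the content of Proposition~\ref{prop:germ equality}. For well-definedness, if $[\tau^\lambda\sigma^\mu,\alpha] = [\tau^{\lambda'}\sigma^{\mu'},\beta]$ then $\alpha=\beta$ and (say, for $\mu\leq\mu'$) there is $\eta\in\Lambda_{\GG_+}$ with $\lambda'=\lambda\eta$, $\mu'=\mu\eta$; then $\lambda'(\mu')^{-1} = \lambda\eta\eta^{-1}\mu^{-1} = \lambda\mu^{-1}$ in $F(\GG)$. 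Conversely, if $(\lambda\mu^{-1},\alpha)=(\lambda'(\mu')^{-1},\beta)$, then $\alpha=\beta$ forces $q(\mu),q(\mu')\leq\alpha$, so $\mu,\mu'$ are comparable by Proposition~\ref{prop:word category relation}; writing $\mu'=\mu\eta$ and cancelling $\mu^{-1}\mu = s(\mu)$ in the identity $\lambda\mu^{-1}=\lambda'(\mu')^{-1}=\lambda'\eta^{-1}\mu^{-1}$ recovers $\lambda'=\lambda\eta$, so the germs agree by Proposition~\ref{prop:germ equality}. Hence $\Theta$ is a well-defined bijection.

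That $\Theta$ is a groupoid homomorphism is a direct computation from Proposition~\ref{prop:S_G+ structure}(2). The germ composition multiplies in $\SS_{\GG_+}$, and its two nonzero cases are matched by the $F(\GG)$-products $(\lambda\mu^{-1})(\nu\xi^{-1}) = \lambda(\xi\eta)^{-1}$ when $\mu=\nu\eta$ and $(\lambda\mu^{-1})(\nu\xi^{-1}) = (\lambda\eta)\xi^{-1}$ when $\nu=\mu\eta$, after cancelling $\nu^{-1}\nu$ or $\mu^{-1}\mu$; units go to units and $[\tau^\lambda\sigma^\mu,\alpha]^{-1}=[\tau^\mu\sigma^\lambda,\lambda\mu^{-1}\cdot\alpha]$ is sent to $(\mu\lambda^{-1},\lambda\mu^{-1}\cdot\alpha)=(\lambda\mu^{-1},\alpha)^{-1}$. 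For the homeomorphism claim, a basic open set of the germ groupoid is $[\tau^\lambda\sigma^\mu, W]$ with $W$ open in $D_{\tau^\mu\sigma^\mu} = q(\mu)E_\Sigma^\infty$, and $\Theta$ carries it bijectively onto $(\{\lambda\mu^{-1}\}\times W) \cap (F(\GG) \ltimes \partial W_{\GG_+})|_{E_\Sigma^\infty}$, which is exactly a basic open set of the subspace topology because $F(\GG)$ is discrete; running this in both directions gives continuity and openness.

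I expect the main obstacle to be the injectivity bookkeeping, namely reconstructing the germ-equivalence data $(\eta$, the comparability $\mu\leq\mu')$ from the single group element $\lambda\mu^{-1}$, which is where cancellation in $F(\GG)$ and Proposition~\ref{prop:germ equality} do the real work. A secondary point requiring care is confirming that germ composability and transformation-groupoid composability coincide: whenever $[\tau^\lambda\sigma^\mu,\alpha]$ and $[\tau^\nu\sigma^\xi,\beta]$ are composable one has $q(\mu)\leq\alpha$ and $q(\nu)\leq\alpha$, forcing $\mu$ and $\nu$ to be comparable and hence landing in a nonzero case of Proposition~\ref{prop:S_G+ structure}(2), so that no product is spuriously sent to a nonexistent morphism. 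Everything else reduces to the identities $\mu^{-1}\mu=s(\mu)$ and $\nu^{-1}\nu=s(\nu)$ together with the three cited propositions.
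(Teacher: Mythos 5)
Your proof is correct and takes essentially the same route as the paper: the same map $\Theta([\tau^\lambda\sigma^\mu,\alpha]) = (\lambda\mu^{-1},\alpha)$, surjectivity from Proposition~\ref{prop:F(G) acting on E_Sigma^infty}, injectivity via comparability of $q(\mu), q(\xi)$ (Proposition~\ref{prop:word category relation}) plus cancellation in $F(\GG)$ and Proposition~\ref{prop:germ equality}, and the same basic-open-sets argument for the homeomorphism. The only cosmetic difference is that you verify the homomorphism property by direct computation with Proposition~\ref{prop:S_G+ structure}(2) where the paper cites Theorem~\ref{thm:semigroup crossed product}; your explicit check that composability forces the semigroup product to be nonzero is a nice touch but not a departure in method.
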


\begin{proof}
    Propositions~\ref{prop:germ equality} and \ref{prop:F(G) acting on E_Sigma^infty} together mean that we can define a map
    \[ \Theta \colon \SS_{\GG_+} \ltimes E_\Sigma^\infty \to (F(\GG) \ltimes \partial W_{\GG_+}) |_{E_\Sigma^\infty}, \quad [\tau^\lambda \sigma^\mu, \alpha] \mapsto (\lambda\mu^{-1}, \alpha), \]
    and Theorem~\ref{thm:semigroup crossed product} implies that $\Theta$ is a groupoid homomorphism.
    We claim that $\Theta$ is a topological groupoid isomorphism.
    
    Proposition~\ref{prop:F(G) acting on E_Sigma^infty} gives that $\Theta$ is surjective. For injectivity, suppose that $[\tau^\lambda \sigma^\mu, \alpha]$ and $[\tau^\nu \sigma^\xi, \beta]$ are two germs in $\SS_{\GG_+} \ltimes E_\Sigma^\infty$ such that $\Theta([\tau^\lambda \sigma^\mu, \alpha]) = \Theta([\tau^\nu \sigma^\xi, \beta])$. So $(\lambda\mu^{-1}, \alpha) = (\nu\xi^{-1}, \beta)$, which means that $\alpha = \beta$ and $\lambda\mu^{-1} = \nu\xi^{-1}$. Now, since $\alpha$ is in the domain of both $\tau^\lambda \sigma^\mu$ and $\tau^\nu \sigma^\xi$, we must have that $q(\mu)$ and $q(\xi)$ are both initial subpaths of $\alpha$ and therefore must be comparable. Without loss of generality, suppose that $q(\mu) \leq q(\xi)$. Then Proposition~\ref{prop:word category relation} gives that $\mu \leq \xi$, so there is some $\eta \in \Lambda_{\GG_+}$ such that $\xi = \mu\eta$. Then $\lambda\mu^{-1} = \nu(\mu\eta)^{-1} = \nu\eta^{-1}\mu^{-1}$ in $F(\GG)$, so $\lambda = \nu\eta^{-1}$ and thus $\nu = \lambda \eta$. Then Proposition~\ref{prop:germ equality} gives that $[\tau^\lambda \sigma^\mu, \alpha] = [\tau^\nu \sigma^\xi, \beta]$, proving that $\Theta$ is injective.
    
    Finally we show that $\Theta$ is a homeomorphism. Take a basic open set $[\tau^\lambda \sigma^\mu, U] \subseteq \SS_{\GG_+} \ltimes E_\Sigma^\infty$, where $U \subseteq q(\mu)E_\Sigma^\infty$ is open. The image of this set under $\Theta$ is $(\lambda \mu^{-1}, U)$, which is open in $(F(\GG) \ltimes \partial W_{\GG_+}) |_{E_\Sigma^\infty}$. Similarly, the inverse image of the basic open set $(\lambda \mu^{-1}, U) \subseteq (F(\GG) \ltimes \partial W_{\GG_+}) |_{E_\Sigma^\infty}$ is $[\tau^\lambda \sigma^\mu, U]$, which is open in $\SS_{\GG_+} \ltimes E_\Sigma^\infty$. Thus $\Theta$ induces a bijection between basic open sets, and is therefore a homeomorphism. This proves that the groupoids $\SS_{\GG_+} \ltimes E_\Sigma^\infty$ and $(F(\GG) \ltimes \partial W_{\GG_+}) |_{E_\Sigma^\infty}$ are topologically isomorphic.
\end{proof}

We are now ready to prove Theorem~\ref{thm:morita equiv}.

\begin{proof}[Proof of Theorem~\ref{thm:morita equiv}]
    Let $\Sigma$ be a set of transversals for $\GG_+$. Proposition~\ref{prop:groupoid of germs is a restriction} says that the groupoid of germs $\SS_{\GG_+} \ltimes E_\Sigma^\infty$ is topologically isomorphic to the groupoid $(F(\GG) \ltimes \partial W_{\GG_+}) |_{E_\Sigma^\infty}$, and by definition we have that the transformation groupoid $\pi_1(\GG, x) \ltimes \partial X_{\GG_+,x}$ is the groupoid $(F(\GG) \ltimes \partial W_{\GG_+}) |_{\partial X_{\GG_+,x}}$. We claim that both $E_\Sigma^\infty$ and $\partial X_{\GG_+,x}$ are full open subsets of the unit space of $F(\GG) \ltimes \partial W_{\GG_+}$.
    
    First we consider $E_\Sigma^\infty$. This set is open as $E_\Sigma^\infty = \bigsqcup_{v \in \Gamma^0} Z(G_v)$. To see that it is full, take any $\gamma \in \partial W_{\GG_+}$, viewed as an eventually directed infinite $\GG$-word. Write $\gamma = \gamma' \alpha$, where $\alpha$ is an infinite $\GG_+$-word (which we can view as an element of $E_\Sigma^\infty$, as in Section~\ref{subsec:F(G) boundary action}). Then $\gamma$ is in the same orbit as $\alpha \in E_\Sigma^\infty$, showing that $E_\Sigma^\infty$ is full.
    
    Now consider $\partial X_{\GG_+,x}$. This set is open as $\partial X_{\GG_+,x} = \bigcup_{\gamma \in x\GG^*} Z(\gamma G_{s(\gamma)})$. To see that it is full, again take any $\gamma \in \partial W_{\GG_+}$, viewed as an eventually directed infinite $\GG$-word. Write $v$ for the range of $\gamma$, and let $\gamma_v \in x F(\GG) v$. Then $\gamma_v \gamma$ is an eventually directed infinite $\GG$-word with range $x$, and so can be viewed as an element of $\partial X_{\GG_+, x}$. This shows that $\partial X_{\GG_+, x}$ is full.
    
    Hence we can apply Lemma~\ref{lemma:restrictions Morita equiv}, which together with Proposition~\ref{prop:groupoid of germs is a restriction} gives that $C^*(\SS_{\GG_+} \ltimes E_\Sigma^\infty)$ is Morita equivalent to $C^*(\pi_1(\GG, x) \ltimes \partial X_{\GG_+,x})$. The theorem follows, since $C^*(\GG_+) \cong C^*(\SS_{\GG_+} \ltimes E_\Sigma^\infty)$ by Theorem~\ref{thm:semigroup crossed product}, and $C_0(\partial X_{\GG_+, x}) \rtimes \pi_1(\GG, x) \cong C^*(\pi_1(\GG, x) \ltimes \partial X_{\GG_+,x})$.
\end{proof}

\begin{remark} \label{rmk:stable isomorphism}
    We can in fact say a little more about the relationship between $C^*(\GG_+)$ and $C_0(\partial X_{\GG_+, x}) \rtimes \pi_1(\GG, x)$, under the assumptions of Theorem~\ref{thm:morita equiv}. Both $C^*$-algebras are separable since they have countable generating sets, and two separable $C^*$-algebras are Morita equivalent if and only if they are stably isomorphic \cite{stable_isomorphism_morita_equivalence}. Thus we have that $C^*(\GG_+)$ is stably isomorphic to $C_0(\partial X_{\GG_+, x}) \rtimes \pi_1(\GG, x)$.
\end{remark}

\section{A construction of all stable UCT Kirchberg algebras} \label{sec:Kirchberg}

In this final section we aim to show that the class of directed graph of groups $C^*$-algebras includes all stable UCT Kirchberg algebras (recall that a Kirchberg algebra is a $C^*$-algebra that is simple, separable, purely infinite, and nuclear). These algebras are the subject of the celebrated Kirchberg--Phillips classification theorem \cite{Kirchberg, Phillips}, part of which we state below.

\begin{thm}[Kirchberg--Phillips classification theorem]
    Suppose that $A$ and $B$ are both stable UCT Kirchberg algebras. Then $A \cong B$ if and only if $K_*(A) \cong K_*(B)$.
\end{thm}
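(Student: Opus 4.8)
The plan is to reduce the classification to $KK$-theory and then invoke the two deep pillars underlying the theorem: the Rosenberg--Schochet universal coefficient theorem on the homological side, and Kirchberg's uniqueness/existence machinery on the analytic side. The forward implication is immediate, since an isomorphism $A \cong B$ induces a graded isomorphism $K_*(A) \cong K_*(B)$ by functoriality of $K$-theory. All the work lies in the converse, so suppose we are handed a graded isomorphism $\alpha = (\alpha_0, \alpha_1) \colon K_*(A) \to K_*(B)$.

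First I would lift $\alpha$ to a $KK$-equivalence. Since $A$ and $B$ lie in the bootstrap (UCT) class, the Rosenberg--Schochet universal coefficient sequence
$$0 \to \mathrm{Ext}^1_{\Z}\!\big(K_*(A), K_{*+1}(B)\big) \to KK(A,B) \xrightarrow{\gamma} \mathrm{Hom}\!\big(K_*(A), K_*(B)\big) \to 0$$
is exact and natural in both variables. Surjectivity of $\gamma$ produces a class $\kappa \in KK(A,B)$ with $\gamma(\kappa) = \alpha$, that is, with $K_*(\kappa) = \alpha$. Because $\alpha$ is an isomorphism, $\kappa$ is automatically invertible in $KK$: this is the standard consequence of the UCT that, within the bootstrap class, any $KK$-element inducing a $K_*$-isomorphism is a $KK$-equivalence (seen, for instance, by passing to the mapping cone of $\kappa$, whose $K$-theory vanishes and which is therefore $KK$-contractible by the UCT). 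The content of this step is that for UCT algebras a graded $K$-theory isomorphism is precisely the data of a $KK$-equivalence.

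The analytic heart, and the step I expect to be by far the hardest, is to realize this $KK$-equivalence by a genuine $*$-isomorphism. Here I would appeal to Kirchberg's classification of stable Kirchberg algebras: when $A$ and $B$ are stable Kirchberg algebras, every invertible element of $KK(A,B)$ is induced by a $*$-isomorphism $A \to B$. The proof of that statement is what makes the whole theorem deep; its ingredients are Kirchberg's $\OO_\infty$-absorption theorem (a separable Kirchberg algebra $D$ satisfies $D \cong D \otimes \OO_\infty$), his $\OO_2$-embedding theorem, and the resulting existence and uniqueness theorems for $*$-homomorphisms up to asymptotic unitary equivalence: any prescribed $KK$-class is realized by a homomorphism, and two homomorphisms carrying the same $KK$-class are asymptotically unitarily equivalent. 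Feeding $\kappa$ and a $KK$-inverse into these theorems yields homomorphisms $\Phi \colon A \to B$ and $\Psi \colon B \to A$ whose composites are asymptotically unitarily equivalent to the identities, and an Elliott-style approximate-intertwining argument then bends $\Phi$ into an isomorphism.

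Rather than reproduce this machinery, I would cite \cite{Kirchberg} and \cite{Phillips} for the realization step, and Rosenberg--Schochet for the universal coefficient sequence, since each is a substantial theorem in its own right and the only contribution here is to assemble them. I would also flag that the hypotheses enter exactly where expected: separability and the UCT drive the universal coefficient sequence; separability, nuclearity, simplicity and pure infiniteness drive $\OO_\infty$-absorption and the asymptotic-morphism uniqueness theorem; and stability removes any bookkeeping of the class of the unit, so that the bare graded group $K_*$ already serves as a complete invariant.
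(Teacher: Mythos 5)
Your proposal is correct and matches the paper's treatment: the paper states this theorem as a quoted background result and offers no proof of its own, simply citing \cite{Kirchberg} and \cite{Phillips}, which is exactly where your argument bottoms out after the (accurately described) Rosenberg--Schochet lifting of the $K_*$-isomorphism to a $KK$-equivalence. Your outline of the two pillars --- the UCT step, including the mapping-cone argument for invertibility of the lifted class, and Kirchberg's existence/uniqueness theorems plus the Elliott intertwining --- is the standard and correct route, so there is nothing to fault.
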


One of the benefits of such a classification result is that it allows us realise these abstractly defined $C^*$-algebras via a concrete construction, as has been done in \cite{Spielberg_hybrid_graphs} and \cite{Katsura}. Our main result of this section, which we state below, adds to this list of constructions; notably, our result gives the first concrete construction of Kirchberg algebras as crossed product $C^*$-algebras.

\begin{thm} \label{thm:dgog Kirchberg}
    Let $A$ be a stable UCT Kirchberg algebra. There exists a countable, row-finite directed graph of infinite cyclic groups $\GG_+ = (\Gamma_+, G)$ with no sources, such that $A \cong C^*(\GG_+) \cong C_0(\partial X_{\GG_+, x}) \rtimes \pi_1(\GG_+, x)$ for any $x \in \Gamma^0$.
\end{thm}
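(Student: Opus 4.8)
The plan is to reduce the statement to a purely $K$-theoretic realisation problem and then solve that problem by an explicit construction. By the Kirchberg--Phillips classification theorem (\cite{Kirchberg, Phillips}), a stable UCT Kirchberg algebra is determined up to isomorphism by the pair $(K_0(A), K_1(A))$, and every pair of countable abelian groups occurs in this way. Thus it suffices to produce, for each pair $(G_0, G_1)$ of countable abelian groups, a countable row-finite directed graph of infinite cyclic groups $\GG_+$ with no sources such that $C^*(\GG_+)$ is a Kirchberg algebra satisfying the UCT, with $K_0(C^*(\GG_+)) \cong G_0$ and $K_1(C^*(\GG_+)) \cong G_1$. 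The isomorphisms with the crossed product $C_0(\partial X_{\GG_+,x}) \rtimes \pi_1(\GG,x)$ then come for free from Theorem~\ref{thm:morita equiv}.

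First I would dispatch the structural hypotheses. Separability is automatic, since the generating $(\GG_+,\Sigma)$-family is countable. For nuclearity and the UCT I would argue that the groupoid model $\SS_{\GG_+} \ltimes E_\Sigma^\infty$ of Theorem~\ref{thm:semigroup crossed product} (equivalently the transformation groupoid $\pi_1(\GG,x) \ltimes \partial X_{\GG_+,x}$) is amenable: the fundamental group acts on the tree $X_{\GG_+,x}$ with amenable (infinite cyclic) vertex stabilisers, so the induced boundary action is topologically amenable; amenability gives nuclearity, and the UCT then follows from Tu's theorem for amenable \'etale groupoids. Simplicity and pure infiniteness I would read off from the structural results of \cite{BSTW} for boundary actions on directed trees, which translate into combinatorial conditions on $\GG_+$ (a cofinality/minimality condition together with an aperiodicity condition for simplicity, and the presence of suitable expanding loops for pure infiniteness); the construction will be arranged to satisfy these.

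The heart of the matter is the $K$-theory. Since every vertex and edge group is $\Z$, the building-block $C^*$-algebras are copies of $C^*(\Z) \cong C(\T)$, with $K_0 \cong K_1 \cong \Z$; crucially, an edge inclusion $\alpha_e = (\times m_e)\colon \Z \to \Z$ induces on $C(\T)$ the map $z \mapsto z^{m_e}$, which is the identity on $K_0$ but is multiplication by $m_e$ on $K_1$. Feeding this into the six-term exact sequence for $K_*(C^*(\GG_+))$ supplied by \cite{BSTW} (whose boundary maps are assembled from the incidence data $r,s$ at the $K_0$-level and from the integers $\{m_e, m_{\overline{e}}\}$ at the $K_1$-level) yields a description of $K_0$ and $K_1$ as kernels and cokernels of integer matrices in which the edge-multiplicities appear. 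This is precisely the source of torsion that ordinary directed graph algebras lack: there $K_1$ is always free, whereas here the $\times m_e$ maps allow cokernels such as $\Z/m_e\Z$ to appear in both $K_0$ and $K_1$.

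The remaining task, and the step I expect to be the main obstacle, is to realise an arbitrary prescribed pair $(G_0, G_1)$ by choosing the graph $\Gamma_+$ and the multiplicities. I would begin from free resolutions $0 \to F_1 \to F_0 \to G_i \to 0$ of $G_0$ and $G_1$, encode the presenting integer matrices as the edge-multiplicity data, and design $\Gamma_+$ so that the six-term sequence collapses and returns $\coker \cong G_0$ in degree $0$ and $\ker \cong G_1$ in degree $1$, mirroring the classical range-of-invariant computations for Cuntz--Krieger and graph algebras but now exploiting the extra $K_1$-level maps. The genuine difficulty is to do this while simultaneously forcing the simplicity and pure-infiniteness conditions above: the naive resolution graph need not be cofinal or aperiodic, so I would augment it with additional loops and edges chosen to leave the six-term sequence (hence the $K$-theory) unchanged while ensuring that every vertex lies on an entered cycle and that the minimality and aperiodicity hypotheses hold. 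Verifying that such an augmentation can always be carried out, for every pair of countable abelian groups --- including those with torsion in $K_1$ --- is where the bulk of the work lies.
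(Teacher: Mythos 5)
Your outline coincides with the paper's strategy (Kirchberg--Phillips reduction, amenability of the boundary action for nuclearity and the UCT, the criteria from \cite{BSTW} for simplicity and pure infiniteness, and the $K$-theory formula in terms of integer matrices), but the step you defer as ``where the bulk of the work lies'' is precisely the content of the paper's proof, and your sketch of it contains two concrete defects. First, you propose that the construction ``returns $\coker \cong G_0$ in degree $0$ and $\ker \cong G_1$ in degree $1$'': this cannot work, because all the relevant maps are $\Z$-module homomorphisms $\Z[\Gamma^0] \to \Z[\Gamma^0]$ between free abelian groups, so every kernel term is free and torsion in $G_1$ can never arise there (this contradicts your own earlier, correct, observation that torsion enters $K_1$ through cokernels of the $m_e$-data). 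In the paper's formula (Proposition~\ref{prop:Ktheory}) one has $K_1 \cong \coker(1-M) \oplus \ker(1-N)$ and $K_0 \cong \coker(1-N) \oplus \ker(1-M)$, so $G_1$ must be realised as $\coker(1-M)$, and one must additionally force $\ker(1-N) = \ker(1-M) = 0$ to avoid spurious free summands in both degrees. Second, your plan to take two independent free resolutions of $G_0$ and $G_1$ and then ``augment'' the graph ignores that $N$ and $M$ cannot be tuned independently: each edge $e$ carries both multiplicities, with $n_e \geq 1$ and $m_e \neq 0$, so the supports of $N$ and $M$ are forced to coincide exactly (the condition $N_{i,j} = 0 \iff M_{i,j} = 0$ of Notation~\ref{notation:G^N,M}) and $N$ must be entrywise positive on that common support. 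Any naive augmentation perturbs both matrices at once, so it is not clear it ``leaves the $K$-theory unchanged''; overcoming this coupling is the genuine difficulty.

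The paper resolves it with an explicit Katsura-style construction (Construction~\ref{construction:N,M}): choose injective $T, S$ with $\coker T \cong G_0$, $\coker S \cong G_1$, set $Y = 2\abs{T} + \abs{S} + X$, and take $N = \begin{psmallmatrix} 2I & T+Y \\ I & I+Y \end{psmallmatrix}$, $M = \begin{psmallmatrix} 3I & S+2Y \\ I & I+Y \end{psmallmatrix}$. The single correction term $Y$ simultaneously makes all entries admissible, matches the two supports, and, via elementary block factorisations of $1-N$ and $1-M$, yields $\ker(1-N) = \ker(1-M) = 0$ together with $\coker(1-N) \cong G_0$ and $\coker(1-M) \cong G_1$; meanwhile the $X$-part makes the underlying graph strongly connected (hence cofinal) and the loops with $(n_e, m_e) = (2,3)$ supply both the pure-infiniteness condition and the denominator-growth condition for topological freeness. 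Finally, your claim that the isomorphism with the crossed product ``comes for free from Theorem~\ref{thm:morita equiv}'' is too quick: Morita equivalence gives only stable isomorphism, and to upgrade to an honest isomorphism the paper must show that $\partial X_{\GG_+,x}$ is noncompact, so the crossed product is nonunital and hence stable by Zhang's theorem, before applying Kirchberg--Phillips a second time.
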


In order to show this, we use Theorem~\ref{thm:morita equiv} along with the results in \cite{BSTW}, which provides both a $K$-theory formula for crossed products coming from group actions on directed tree boundaries as well as criteria for such a crossed product to be a UCT Kirchberg algebra. We then transfer these results to our context of directed graph of groups $C^*$-algebras using the observation that the class of UCT Kirchberg algebras is closed under stable isomorphism (see \cite{Rordam}).

\subsection{$C^*$-algebras of directed graphs of infinite cyclic groups}

In this subsection, we investigate the $C^*$-algebras of directed graphs of groups where all vertex and edge groups are infinite cyclic (we will call these \textit{directed graphs of infinite cyclic groups}). In particular, we give a formula for the $K$-theory of these $C^*$-algebras (Proposition~\ref{prop:Ktheory}), as well as sufficient conditions for the $C^*$-algebra to be a UCT Kirchberg algebra (Proposition~\ref{prop:Kirchberg conditions}).

We start with some notation.

\begin{notation} \label{not:Z-graph n_e m_e}
    Let $\GG_+ = (\Gamma_+, G)$ be a directed graph of infinite cyclic groups. We fix generators $1_v \in G_v$ and $1_e \in G_e$ for each vertex $v \in \Gamma^0$ and each edge $e \in \Gamma_+^1$ so that for each edge $e \in \Gamma_+^1$, the map $\alpha_e\colon G_e \to G_{r(e)}$ is given by multiplication by some \textit{positive} integer $n_e \in \N \setminus \{0\}$ (this is always possible to arrange); and the map $\alpha_{\overline{e}} \colon G_e \to G_{s(e)}$ will be given by multiplication by some non-zero integer $m_e \in \Z \setminus \{0\}$.
\end{notation}

\begin{remark} \label{rmk:infinite cyclic BSTW translation}
    By Theorem~\ref{thm:directed Bass-Serre}, we have that the vertex stabilisers of the action $\pi_1(\GG,x) \curvearrowright X_{\GG_+,x}$ are infinite cyclic, and that the quotient directed graph of the action is isomorphic to $\Gamma_+$. Moreover, for each $e \in \Gamma_+^1$, the integers $n_e$ and $m_e$ from Notation~\ref{not:Z-graph n_e m_e} are exactly the integers $\omega_e$ and $\omega_{\overline{e}}$ of \cite[Notation~4.18]{BSTW} respectively.
\end{remark}

We can now give a formula for the $K$-theory of $C^*(\GG_+)$ for a directed graph of infinite cyclic groups $\GG_+$.

\begin{prop} \label{prop:Ktheory}
    Let $\GG_+ = (\Gamma_+, G)$ be a countable, row-finite directed graph of infinite cyclic groups with no sources. Let $N, M \colon \Z[\Gamma^0] \to \Z[\Gamma^0]$ be the $\Z$-module homomorphisms given by
    \[ N(v) = \sum_{e\in v\Gamma^1_+} n_e s(e) \qquad \text{and} \qquad M(v) = \sum_{e\in v\Gamma^1_+} m_e s(e) \qquad \text{for}\ v \in \Gamma^0. \]
    Then
    \begin{align*}
        K_0(C^*(\GG_+)) &\cong \coker(1-N) \oplus \ker(1-M), \quad \text{and} \\
        K_1(C^*(\GG_+)) &\cong \coker(1-M) \oplus \ker(1-N).
    \end{align*}
\end{prop}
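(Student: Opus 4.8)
The plan is to reduce the computation to the $K$-theory of the crossed product $C_0(\partial X_{\GG_+,x}) \rtimes \pi_1(\GG,x)$ and then invoke the formula of \cite{BSTW}. First I would fix a base vertex $x \in \Gamma^0$ and apply Theorem~\ref{thm:morita equiv}, which gives that $C^*(\GG_+)$ is Morita equivalent to $C_0(\partial X_{\GG_+,x}) \rtimes \pi_1(\GG,x)$ (indeed stably isomorphic, by Remark~\ref{rmk:stable isomorphism}). Since Morita equivalent $C^*$-algebras have isomorphic $K$-theory, it suffices to compute $K_*(C_0(\partial X_{\GG_+,x}) \rtimes \pi_1(\GG,x))$.

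Next I would set up the dictionary between our data and that of \cite{BSTW}. By Remark~\ref{rmk:infinite cyclic BSTW translation}, the action $\pi_1(\GG,x) \curvearrowright X_{\GG_+,x}$ has infinite cyclic vertex stabilisers, its quotient directed graph is $\Gamma_+$, and the integers $n_e, m_e$ of Notation~\ref{not:Z-graph n_e m_e} coincide with the weights $\omega_e, \omega_{\overline{e}}$ appearing in \cite[Notation~4.18]{BSTW}. This places the action squarely in the framework for which \cite{BSTW} provides a $K$-theory formula for the boundary crossed product, so I would apply that formula directly to $C_0(\partial X_{\GG_+,x}) \rtimes \pi_1(\GG,x)$.

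The remaining, and principal, work is to check that the two $\Z$-module endomorphisms appearing in the \cite{BSTW} formula, once the weights $\omega_e, \omega_{\overline{e}}$ are rewritten as $n_e, m_e$, are precisely the maps $N$ and $M$ of the proposition (so that $1-N$ and $1-M$ are exactly the operators whose kernels and cokernels occur). This is essentially bookkeeping, but it is where the conventions must be matched with care: one must confirm that each summation is taken over $v\Gamma_+^1$ (the edges received at $v$) with the contribution landing at $s(e)$, and verify that no hidden transpose or sign is introduced by the ``Australian'' path conventions used here relative to those of \cite{BSTW}. I would also confirm that the interleaving of degrees matches the claim, namely that $\coker(1-N)$ and $\ker(1-M)$ land in $K_0$ while $\coker(1-M)$ and $\ker(1-N)$ land in $K_1$; this should follow from the index shift in the underlying six-term exact sequence, with $N$ governing the portion arising from the edge inclusions $\alpha_e$ and $M$ the portion arising from the $\alpha_{\overline{e}}$. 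I expect the only real obstacle to be this notational reconciliation: once the two maps are identified with $1-N$ and $1-M$, the stated formula is immediate from \cite{BSTW} together with the Morita invariance of $K$-theory, with no further $C^*$-algebraic input required.
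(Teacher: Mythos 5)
Your proposal takes essentially the same route as the paper's proof: Morita invariance of $K$-theory via Theorem~\ref{thm:morita equiv}, then \cite[Theorem~4.19]{BSTW} applied through the dictionary of Remark~\ref{rmk:infinite cyclic BSTW translation}, with the sign/convention worry you flag being exactly resolved by the choice in Notation~\ref{not:Z-graph n_e m_e} that each $n_e$ is positive (so $\abs{\omega_e} = n_e$ and $\sgn(\omega_e)\omega_{\overline{e}} = m_e$, making the BSTW maps literally $N$ and $M$). The one step you leave implicit and should make explicit is the verification of the hypotheses of \cite[Theorem~4.19]{BSTW}, chiefly amenability of the action $\pi_1(\GG,x) \curvearrowright \partial X_{\GG_+,x}$, which holds by \cite[Proposition~6.11]{BSTW} since all vertex stabilisers are infinite cyclic; this is needed twice, both to invoke the $K$-theory formula (which is stated for the \emph{reduced} crossed product) and to identify the reduced crossed product with the full one appearing in Theorem~\ref{thm:morita equiv}.
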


\begin{proof}
    We know from Theorem~\ref{thm:morita equiv} that $C^*(\GG_+)$ is Morita equivalent to the crossed product $C_0(\partial X_{\GG_+, x}) \rtimes \pi_1(\GG, x)$. Since $K$-theory is invariant under Morita equivalence, it is enough to show the corresponding isomorphisms for the $K$-theory of the crossed product. To do this, we apply \cite[Theorem~4.19]{BSTW}. The tree $X_{\GG_+, x}$ is row-finite and has no sources since $\Gamma_+$ has those properties, and it is a finitely aligned multitree in the sense of \cite{BSTW} since it is a tree. The group $\pi_1(\GG, x)$ is countable since $\GG_+$ is countable, and it acts amenably on $\partial X_{\GG_+, x}$ by \cite[Proposition~6.11]{BSTW} (as per Remark~\ref{rmk:infinite cyclic BSTW translation}, all vertex stabilisers of the action are infinite cyclic, and hence amenable). So the assumptions of \cite[Theorem~4.19]{BSTW} are satisfied, and we get that
    \begin{align*}
        K_0(C_0(\partial X_{\GG_+, x}) \rtimes_r \pi_1(\GG, x)) &\cong \coker(1-A_0) \oplus \ker(1-A_1), \quad \text{and} \\
        K_1(C_0(\partial X_{\GG_+, x}) \rtimes_r \pi_1(\GG, x)) &\cong \coker(1-A_1) \oplus \ker(1-A_0),
    \end{align*}
    where $A_0$ and $A_1$ are the $\Z$-module homomorphisms $\Z[\Gamma^0] \to \Z[\Gamma^0]$ defined by
    \[ A_0(v) = \sum_{e \in v\Gamma_+^1} \abs{\omega_e} s(e) \quad \text{and} \quad A_1(v) = \sum_{e \in v\Gamma_+^1} \operatorname{sgn}(\omega_e)\omega_{\overline{e}} s(e) \]
    for $v \in \Gamma^0$. But by Remark~\ref{rmk:infinite cyclic BSTW translation}, we have that $\omega_e = n_e$ and $\omega_{\overline{e}} = m_e$ for all $e \in \Gamma_+^1$, and recall that each $n_e$ was chosen to be positive. Thus the maps $A_0$ and $A_1$ are exactly the maps $N$ and $M$ from the proposition statement, respectively. Finally, we note that the reduced crossed product $C_0(\partial X_{\GG_+, x}) \rtimes_r \pi_1(\GG, x)$ is the same as the full crossed product $C_0(\partial X_{\GG_+, x}) \rtimes \pi_1(\GG, x)$ since the action of $\pi_1(\GG, x)$ on $\partial X_{\GG_+, x}$ is amenable. This concludes the proof.
\end{proof}

Next, we give sufficient conditions for $C^*(\GG_+)$ to be a UCT Kirchberg algebra. In the following, for $q \in \Q^\times$ we denote by $\langle q \rangle$ the smallest positive integer that can appear as a denominator when expressing $q$ as a fraction. 

\begin{prop} \label{prop:Kirchberg conditions}
    Let $\GG_+ = (\Gamma_+, G)$ be a countable, row-finite directed graph of infinite cyclic groups with no sources. Suppose that:
    \begin{enumerate}
        \item the directed graph $\Gamma_+$ is cofinal;
        \item there is a loop $\eta = \eta_1\eta_2\dots\eta_m$ in $\Gamma_+$ such that either $\eta$ has an entrance, or $n_{\eta_i} \geq 2$ for some $1 \leq i \leq m$; and
        \item there is some infinite path $\alpha = e_1 e_2 \ldots \in \Gamma_+^\infty$ such that
        \[ \limsup_{k \to \infty} \left\langle \frac{m_{e_1}\dots m_{e_{k-1}}}{n_{e_1}\dots n_{e_k}} \right\rangle = \infty. \]
    \end{enumerate}
    Then $C^*(\GG_+)$ is a UCT Kirchberg algebra.
\end{prop}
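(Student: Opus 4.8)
The plan is to pass to the crossed product and then invoke the structural results of \cite{BSTW}. By Theorem~\ref{thm:morita equiv} and Remark~\ref{rmk:stable isomorphism}, $C^*(\GG_+)$ is stably isomorphic to $C_0(\partial X_{\GG_+,x}) \rtimes \pi_1(\GG,x)$, and each of the properties defining a UCT Kirchberg algebra---simplicity, separability, nuclearity, pure infiniteness, and the UCT---is preserved under stable isomorphism. It therefore suffices to prove that the crossed product, equivalently the $C^*$-algebra of the transformation groupoid $\mathscr{G} := \pi_1(\GG,x) \ltimes \partial X_{\GG_+,x}$, is a UCT Kirchberg algebra.

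First I would dispatch the ``soft'' properties. Separability is immediate since $\GG_+$ is countable. As in the proof of Proposition~\ref{prop:Ktheory}, the action of $\pi_1(\GG,x)$ on $\partial X_{\GG_+,x}$ is amenable (by \cite[Proposition~6.11]{BSTW}, translated through Remark~\ref{rmk:infinite cyclic BSTW translation}), so $\mathscr{G}$ is an amenable Hausdorff \'etale groupoid. This delivers nuclearity, the equality of the full and reduced crossed products, and---via Tu's theorem---the UCT. What remains is to show that $\mathscr{G}$ is minimal and effective, giving simplicity of $C^*(\mathscr{G})$, and that $C^*(\mathscr{G})$ is purely infinite.

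The heart of the argument is to match conditions (1)--(3) with the groupoid-dynamical criteria that \cite{BSTW} supplies for actions on directed tree boundaries, using the dictionary $\omega_e = n_e$, $\omega_{\overline e} = m_e$ of Remark~\ref{rmk:infinite cyclic BSTW translation}. Condition (1), cofinality of $\Gamma_+$, is exactly what forces every $\pi_1(\GG,x)$-orbit in $\partial X_{\GG_+,x}$ to be dense, hence $\mathscr{G}$ to be minimal. Condition (2)---a loop with an entrance, or with some $n_{\eta_i} \geq 2$---is designed to produce the local contraction needed for pure infiniteness: it yields a boundary cylinder set that some element of the groupoid maps properly into a strictly smaller one, which, combined with minimality, gives that the (simple) algebra $C^*(\mathscr{G})$ is purely infinite. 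Condition (3), the divergence $\limsup_{k} \langle m_{e_1}\cdots m_{e_{k-1}}/(n_{e_1}\cdots n_{e_k})\rangle = \infty$ along some infinite path $\alpha$, is what I would use to establish effectiveness (topological principality): it should guarantee that the points of $\partial X_{\GG_+,x}$ with trivial isotropy are dense, so that the interior of the isotropy of $\mathscr{G}$ is just the unit space.

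The step I expect to be the main obstacle is deriving effectiveness from condition (3). In contrast to the undirected, finite-group setting, the vertex groups here are infinite cyclic, so boundary points may carry nontrivial (infinite cyclic) isotropy and one must track how the transversal data propagates along an infinite path. The crux is to show that a nontrivial element of $\pi_1(\GG,x)$ fixing a whole boundary cylinder would have to be compatible with the transversal quotients at every level, and that the reduced denominator $\langle m_{e_1}\cdots m_{e_{k-1}}/(n_{e_1}\cdots n_{e_k})\rangle$ precisely measures the arithmetic obstruction to the existence of such a fixing element at depth $k$; its divergence along $\alpha$ then rules out any open set of fixed points and forces effectiveness. Once minimality, effectiveness, amenability and pure infiniteness are in hand, the conclusion that $C^*(\mathscr{G})$---and hence $C^*(\GG_+)$---is a UCT Kirchberg algebra follows from the standard characterisation of simple, purely infinite groupoid $C^*$-algebras.
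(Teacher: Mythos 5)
Your proposal follows the paper's proof essentially step for step: reduce to the crossed product $C_0(\partial X_{\GG_+,x}) \rtimes \pi_1(\GG,x)$ via stable isomorphism (the class of UCT Kirchberg algebras being closed under it), obtain separability, nuclearity and the UCT from countability and amenability of the action \cite[Proposition~6.11]{BSTW}, and match condition (1) to minimality, condition (3) to topological freeness, and condition (2) to local contractivity, so that simplicity and pure infiniteness follow from the standard results on topologically free minimal actions and local boundary actions. The step you flag as the main obstacle---deriving topological freeness from condition (3)---is in fact available off the shelf: the paper simply cites \cite[Propositions~6.1,~6.6]{BSTW}, which state that conditions (1) and (3) are \emph{equivalent} to minimality and topological freeness under the dictionary $\omega_e = n_e$, $\omega_{\overline{e}} = m_e$ of Remark~\ref{rmk:infinite cyclic BSTW translation}; the only other nuance is that verifying local contractivity via \cite[Proposition~6.2]{BSTW} uses cofinality (condition (1)) a second time, to supply a path from $r(\eta)$ to every vertex, and not condition (2) alone.
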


\begin{proof}
    Since the class of UCT Kirchberg algebras is closed under stable isomorphism, Remark~\ref{rmk:stable isomorphism} means that it is enough to show that the crossed product $C_0(\partial X_{\GG_+,x}) \rtimes \pi_1(\GG,x)$ is a UCT Kirchberg algebra.
    
    Separability is automatic, since $\pi_1(\GG,x)$ is countable. Note that the action $\pi_1(\GG,x) \curvearrowright \partial X_{\GG_+,x}$ is amenable by \cite[Proposition~6.11]{BSTW}, so the crossed product is nuclear \cite{amenable_groupoids_book} and satisfies the UCT (see \cite{Tu_UCT}). So it remains to show that the crossed product is simple and purely infinite.
    
    By \cite[p.124]{topological_freeness}, the crossed product will be simple if the action $\pi_1(\GG,x) \curvearrowright \partial X_{\GG_+,x}$ is minimal and topologically free (we already know the action to be amenable); and in this case the crossed product will be purely infinite if the action is locally contractive \cite[Theorem 9]{purely_infinite_LS} (where local contractivity is called being a \textit{local boundary action}). By Remark~\ref{rmk:infinite cyclic BSTW translation}, conditions (1) and (3) of the proposition are exactly the conditions in \cite[Propositions~6.1,~6.6]{BSTW} which are equivalent to the action $\pi_1(\GG,x) \curvearrowright \partial X_{\GG_+,x}$ being minimal and topologically free, respectively.
    
    Finally, we need to show that the action $\pi_1(\GG,x) \curvearrowright \partial X_{\GG_+,x}$ is locally contractive. Take the loop $\eta$ from condition (2). Since $n_e = [G_{r(e)}:\alpha(G_e)]$ for any $e \in \Gamma_+^1$, we get that $\eta$ satisfies condition (i) of \cite[Proposition~6.2]{BSTW}; moreover, the confinality of $\Gamma_+$ means that there is a path from $r(\eta)$ to any vertex $v \in \Gamma^0$ (consider the infinite path $\eta \eta \dots$), so condition (ii) is also satisfied. Hence \cite[Proposition~6.2]{BSTW} gives that $\pi_1(\GG,x) \curvearrowright \partial X_{\GG_+,x}$ is locally contractive, as required.
\end{proof}

\subsection{Construction of stable UCT Kirchberg algebras}

We now describe how to construct a directed graph of (infinite cyclic) groups whose $C^*$-algebra is a stable UCT Kirchberg algebra with any given $K$-theory; together with Theorem~\ref{thm:morita equiv}, this essentially proves Theorem~\ref{thm:dgog Kirchberg}. Our construction is inspired by the approach in \cite{Katsura}, but is adjusted slightly to work in our context. We begin by explaining how we can define a directed graph of infinite cyclic groups from a pair of integer matrices.

\begin{notation} \label{notation:G^N,M}
    Let $J$ be a countable index set, and let $N \in M_{\abs{J}}(\N)$ and $M \in M_{\abs{J}}(\Z)$ be two matrices indexed over $J$ satisfying $N_{i,j} = 0 \iff M_{i,j} = 0$ for all $i,j \in J$. Define $\GG_+^{N,M} = (\Gamma_+, G)$ to be the directed graph of groups where:
    \begin{itemize}
        \item the underlying graph $\Gamma_+ = (\Gamma^0, \Gamma_+^1, r, s)$ is given by
        \[\Gamma^0 = J, \qquad \Gamma_+^1 = \{(i,j) \in J \times J : N_{i,j} > 0\}, \qquad r(i,j) = i, \qquad s(i,j) = j;\]
        \item all vertex and edge groups are infinite cyclic, with a choice of generator $1_i \in G_i$ for each $i \in \Gamma^0$ and $1_{(i,j)} \in G_{(i,j)}$ for each $(i,j) \in \Gamma_+^1$; and
        \item for each $(i,j) \in \Gamma_+^1$, the monomorphisms $\alpha_{(i,j)}$ and $\alpha_{\overline{(i,j)}}$ are defined by
        \[\alpha_{(i,j)}(1_{(i,j)}) = N_{i,j} 1_i, \qquad \alpha_{\overline{(i,j)}}(1_{(i,j)}) = M_{i,j} 1_j. \]
    \end{itemize}
    We also have the following canonical set of tranvsersals for $\GG^{N,M}_+$: for each $(i,j) \in \Gamma_+^1$, we let $\Sigma_{(i,j)} = \{0, 1_i, 2\times 1_i, \dots, (N_{i,j}-1)1_i\}$.
\end{notation}

\begin{remark} \label{rmk:dgog construction Ktheory}
    Let $N, M$ and $\GG_+^{N,M}$ be as in Notation~\ref{notation:G^N,M}. The maps $N, M \colon \Z[\Gamma^0] \to \Z[\Gamma^0]$ of Proposition~\ref{prop:Ktheory} for $\GG_+^{N,M}$ are precisely the maps given by left multiplication by the matrices $N$ and $M$. Thus we have
    \begin{align*}
        K_0(C^*(\GG_+^{N,M})) &\cong \coker(1-N) \oplus \ker(1-M), \quad \text{and} \\
        K_1(C^*(\GG_+^{N,M})) &\cong \coker(1-M) \oplus \ker(1-N).
    \end{align*}
\end{remark}

In order to prove Theorem~\ref{thm:dgog Kirchberg} we show that for any pair of countable abelian groups $G_0, G_1$ we can find suitable matrices $N, M$ such that $C^*(\GG_+^{N,M})$ is a UCT Kirchberg algebra with $K$-theory $(G_0, G_1)$.

\begin{construction} \label{construction:N,M}
    Let $G_0$ and $G_1$ be countable abelian groups. There exist injective homomorphisms $T, S \colon \Z^\infty \to \Z^\infty$ such that $\coker T \cong G_0$ and $\coker S\cong G_1$. We view these as elements of $M_\infty(\Z)$, indexed over $\N$, and we will write $\abs{T}$ and $\abs{S}$ for the matrices obtained by taking element-wise absolute values of $T$ and $S$ respectively. Define $X \in M_\infty(\N)$ by
    \[X_{k,l} = \begin{cases}
        1 & \text{if}\ \abs{k-l} = 1, \\
        0 & \text{otherwise},
    \end{cases}\]
    for $k,l \in \N$, and define $Y \in M_\infty(\N)$ by $Y = 2\abs{T} + \abs{S} + X$, both indexed over $\N$. Define matrices $N \in M_\infty(\N)$ and $M \in M_\infty(\Z)$ in block form by
    \[N = \begin{pmatrix} 2I & T + Y \\ I & I + Y \end{pmatrix} \quad \text{and} \quad M = \begin{pmatrix} 3I & S + 2Y \\ I & I + Y \end{pmatrix}.\]
    We index $N$ and $M$ over $\N \times \{0,1\}$, where the first block of rows/columns corresponds to the indices $\N \times \{0\}$, and the second block corresponds to the indices $\N \times \{1\}$.
\end{construction}

\begin{prop} \label{prop:dgog construction N,M}
    The matrices $N, M$ of Construction~\ref{construction:N,M} satisfy:
    \begin{enumerate}
        \item $N_{i,j} = 0 \iff M_{i,j} = 0$ for all $i,j \in \N \times \{0,1\}$;
        \item the maps $1-N$ and $1-M$ are injective; and
        \item $\coker(1-N) \cong G_0$ and $\coker(1-M) \cong G_1$.
    \end{enumerate}
\end{prop}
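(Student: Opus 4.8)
The plan is to work throughout with the decomposition of the index set $\N \times \{0,1\}$ into its two copies of $\N$, so that $N$, $M$, and the identity are all $2\times 2$ block matrices over $M_\infty(\Z)$. Writing $1$ for the identity map of $\Z[\Gamma^0]$ and using $1-(I+Y) = -Y$ in the bottom-right block, the two maps to analyse are
\[ 1 - N = \begin{pmatrix} -I & -(T+Y) \\ -I & -Y \end{pmatrix} \qquad\text{and}\qquad 1 - M = \begin{pmatrix} -2I & -(S+2Y) \\ -I & -Y \end{pmatrix}. \]
All three parts will be read off from these block forms.

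For part (1), the key observation is that every entry of $Y = 2\abs{T} + \abs{S} + X$ is a non-negative integer. The diagonal blocks $2I$ and $3I$ share the same (diagonal) zero pattern, and the bottom blocks of $N$ and $M$ coincide, so only the top-right blocks need checking. Here I would group $(T+Y)_{k,l} = (T_{k,l} + 2\abs{T_{k,l}}) + \abs{S_{k,l}} + X_{k,l}$ and use that $t + 2\abs{t} \ge 0$ with equality iff $t = 0$; this shows $(T+Y)_{k,l} = 0$ exactly when $T_{k,l} = S_{k,l} = X_{k,l} = 0$. The same grouping applied to $(S+2Y)_{k,l} = (S_{k,l} + 2\abs{S_{k,l}}) + 4\abs{T_{k,l}} + 2X_{k,l}$ shows it vanishes under precisely the same condition, which gives (1).

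For part (2) I would read off the block equations. If $(1-N)\binom{a}{b} = 0$, the two rows give $-a-(T+Y)b = 0$ and $-a-Yb = 0$; subtracting yields $Tb = 0$, so $b = 0$ by injectivity of $T$, whence $a = 0$. Similarly, if $(1-M)\binom{a}{b} = 0$, the bottom row gives $a = -Yb$, and substituting into $-2a-(S+2Y)b = 0$ collapses the $Y$-terms and leaves $Sb = 0$, so $b = 0$ by injectivity of $S$ and then $a = 0$.

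For part (3) I would apply a single integer block row operation, which is left multiplication by an automorphism of $\Z[\Gamma^0]$ (of the form $\begin{pmatrix} I & -kI \\ 0 & I \end{pmatrix}$) and hence preserves the cokernel up to isomorphism. Subtracting the bottom block-row from the top converts $1-N$ into $\begin{pmatrix} 0 & -T \\ -I & -Y \end{pmatrix}$, while subtracting twice the bottom block-row from the top converts $1-M$ into $\begin{pmatrix} 0 & -S \\ -I & -Y \end{pmatrix}$; the $2I$/$3I$ top-left blocks paired with the coefficients $1$ and $2$ on $Y$ are exactly engineered so that the $Y$-terms cancel and $T$ (resp.\ $S$) is isolated. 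For a matrix $A' = \begin{pmatrix} 0 & -R \\ -I & -Y \end{pmatrix}$, letting $\pi$ denote projection onto the first coordinate block, I would check directly that $\pi(\im A') = \im R$ and that $\ker\pi = 0 \oplus \Z^\infty \subseteq \im A'$ (realised by taking $b=0$), so that $\im A' = \pi^{-1}(\im R)$ and $\pi$ induces $\coker A' \cong \Z^\infty/\im R = \coker R$. Taking $R = T$ and $R = S$ gives $\coker(1-N) \cong \coker T \cong G_0$ and $\coker(1-M) \cong \coker S \cong G_1$. I do not expect a genuine obstacle here; the only care needed is bookkeeping — confirming that $T$, $S$, $X$, and hence $Y$ have finitely supported columns, so that $N$ and $M$ really are endomorphisms of the finitely supported module $\Z[\Gamma^0]$ and the block row operations are honest automorphisms of it — after which the computation is essentially forced by the design of the matrices.
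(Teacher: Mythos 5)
Your proposal is correct and takes essentially the same approach as the paper: part (1) is the paper's computation verbatim, and for parts (2) and (3) the paper factors $1-N = \begin{psmallmatrix} I & I \\ 0 & I \end{psmallmatrix}\begin{psmallmatrix} 0 & -T \\ -I & 0 \end{psmallmatrix}\begin{psmallmatrix} I & Y \\ 0 & I \end{psmallmatrix}$ (and analogously $1-M$ with $2I$ and $S$), with invertible outer factors, so that kernel and cokernel are read off at once, while you apply only the left block row operation and then verify injectivity and $\coker\begin{psmallmatrix} 0 & -R \\ -I & -Y \end{psmallmatrix} \cong \coker R$ directly via the projection onto the first block --- a harmless one-sided variant of the same elimination, engineered by the same cancellation of the $Y$-terms. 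Your closing bookkeeping remark about finite support is sensible but not needed for the proposition itself.
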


\begin{proof}
    To prove (1), it is enough to check that $(T+Y)_{k,l} = 0 \iff (S+2Y)_{k,l} = 0$ for all $k,l \in \N$. First note that for any integer $a \in \Z$, the number $a + 2\abs{a}$ is always non-negative and is equal to $0$ if and only if $a=0$. Thus we have that, for all $k,l \in \N$, both $(T+Y)_{k,l} = (T_{k,l} + 2\abs{T_{k,l}}) + \abs{S_{k,l}} + X_{k,l}$ and $(S+2Y)_{k,l} = (S_{k,l} + 2\abs{S_{k,l}}) + 4\abs{T_{k,l}} + 2X_{k,l}$ equal $0$ if and only if $T_{k,l} = S_{k,l} = X_{k,l} = 0$, proving the claim.
    
    For (2) and (3), note that
    \[ 1-N = \begin{pmatrix} -I & -T-Y \\ -I & -Y \end{pmatrix} = \begin{pmatrix} I & I \\ 0 & I \end{pmatrix}\begin{pmatrix} 0 & -T \\ -I & 0 \end{pmatrix}\begin{pmatrix} I & Y \\ 0 & I \end{pmatrix} \]
    and
    \[ 1-M = \begin{pmatrix} -2I & -S-2Y \\ -I & -Y \end{pmatrix} = \begin{pmatrix} I & 2I \\ 0 & I \end{pmatrix}\begin{pmatrix} 0 & -S \\ -I & 0 \end{pmatrix}\begin{pmatrix} I & Y \\ 0 & I \end{pmatrix}. \]
    
    Since matrices of the form $\begin{psmallmatrix} I & * \\ 0 & I \end{psmallmatrix}$ are invertible (with inverse $\begin{psmallmatrix} I & -* \\ 0 & I \end{psmallmatrix}$), we get that
    \begin{align*}
        \ker(1-N) &= \ker \begin{pmatrix} 0 & -T \\ -I & 0 \end{pmatrix} = 0; \\
        \coker(1-N) &= \coker \begin{pmatrix} 0 & -T \\ -I & 0 \end{pmatrix} = \coker T = G_0; \\
        \ker(1-M) &= \ker \begin{pmatrix} 0 & -S \\ -I & 0 \end{pmatrix} = 0;\ \text{and} \\
        \coker(1-M) &= \coker \begin{pmatrix} 0 & -S \\ -I & 0 \end{pmatrix} = \coker S = G_1,
    \end{align*}
    as claimed.
\end{proof}

Remark~\ref{rmk:dgog construction Ktheory} and Proposition~\ref{prop:dgog construction N,M} together give the following result.

\begin{coro} \label{coro:C*(G^N,M) K-theory}
    Let $N$ and $M$ be as in Construction~\ref{construction:N,M}. Then $K_i(C^*(\GG^{N,M}_+)) \cong G_i$ for $i=0,1$.
\end{coro}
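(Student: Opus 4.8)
The plan is to combine directly the two results cited immediately before the statement, with essentially no additional work. Since Remark~\ref{rmk:dgog construction Ktheory} records that
\[ K_0(C^*(\GG_+^{N,M})) \cong \coker(1-N) \oplus \ker(1-M) \quad \text{and} \quad K_1(C^*(\GG_+^{N,M})) \cong \coker(1-M) \oplus \ker(1-N), \]
it suffices to evaluate the four abelian groups on the right-hand sides, and these are precisely the quantities controlled by Proposition~\ref{prop:dgog construction N,M}.

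Concretely, I would read off the relevant parts of Proposition~\ref{prop:dgog construction N,M}: part (2) gives that $1-N$ and $1-M$ are injective, so that $\ker(1-N) = \ker(1-M) = 0$, while part (3) gives $\coker(1-N) \cong G_0$ and $\coker(1-M) \cong G_1$. Substituting these into the two displayed formulas collapses each direct sum to a single summand, yielding
\[ K_0(C^*(\GG_+^{N,M})) \cong G_0 \oplus 0 \cong G_0 \quad \text{and} \quad K_1(C^*(\GG_+^{N,M})) \cong G_1 \oplus 0 \cong G_1, \]
which is the assertion.

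As the argument is a formal substitution, there is no genuine obstacle; the only point worth confirming is that Remark~\ref{rmk:dgog construction Ktheory} legitimately applies to $\GG_+^{N,M}$. This ultimately rests on Proposition~\ref{prop:Ktheory}, whose hypotheses demand that $\GG_+^{N,M}$ be countable, row-finite, and have no sources. Countability is immediate from the index set $\N \times \{0,1\}$; the absence of sources follows because the diagonal entries $N_{(k,0),(k,0)}=2$ and $N_{(k,1),(k,1)}=1+Y_{k,k}$ are positive, so every vertex carries a self-loop and hence receives an edge; and row-finiteness holds once $T$ and $S$ (hence $N$) are chosen with finitely many nonzero entries per row, which can always be arranged. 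Finally, part (1) of Proposition~\ref{prop:dgog construction N,M} ensures $N_{i,j} = 0 \iff M_{i,j} = 0$, so that $\GG_+^{N,M}$ is a bona fide directed graph of groups in the sense of Notation~\ref{notation:G^N,M} to begin with.
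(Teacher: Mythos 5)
Your proof is correct and takes essentially the same route as the paper, whose entire proof is the one-line observation that Remark~\ref{rmk:dgog construction Ktheory} and Proposition~\ref{prop:dgog construction N,M} (injectivity of $1-N$ and $1-M$ killing the kernel summands, and $\coker(1-N) \cong G_0$, $\coker(1-M) \cong G_1$) together give the result. Your additional verification that $\GG_+^{N,M}$ satisfies the hypotheses of Proposition~\ref{prop:Ktheory} --- countability, no sources via the strictly positive diagonal blocks of $N$, and row-finiteness provided $T$ and $S$ are chosen row-finite --- is a correct and worthwhile check of a point the paper leaves implicit.
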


Before we study properties of $C^*(\GG^{N,M}_+)$, we make the following observation about the graph of groups $\GG^{N,M}_+$.

\begin{remark} \label{remark:G^N,M subgraph}
    Let $N$ and $M$ be as in Construction~\ref{construction:N,M}. Regardless of the groups $G_0, G_1$ used in the construction, the directed graph $\Gamma_+$ underlying $\GG^{N,M}_+$ always has the following graph (which we will call $\Gamma_0$) as a subgraph:
    \[\begin{tikzpicture}
        \foreach \i in {0,...,3} {
            \node[vertex, label=left:{$\scriptstyle (\i,0)$}] (v\i0) at (2*\i,0) {};
            \node[vertex, label=below:{$\scriptstyle (\i,1)$}] (v\i1) at (2*\i,-2) {};
        }
        \foreach \j in {0,1} {
            \node[vertex, fill=none, label=right:{$\cdots$}] (v4\j) at (8,-2*\j) {};
        }
        
        \foreach \i in {0,...,3} {
            \pgfmathtruncatemacro\ii{\i+1}
            \draw[-stealth]
            (v\i0) edge[loop,in=135,out=45,looseness=30] (v\i0)
            (v\i0) edge (v\i1)
            
            (v\i1) edge (v\ii0)
            (v\i1) edge[bend left=10] (v\ii1)
            
            (v\ii1) edge (v\i0)
            (v\ii1) edge[bend left=10] (v\i1);
        }
    \end{tikzpicture}\]
    This is the subgraph with adjacency matrix
    \[ \begin{pmatrix} I & X \\ I & X \end{pmatrix}, \]
    where the top row of vertices corresponds to the vertices $\N \times \{0\}$, and the bottom row of vertices corresponds to the vertices $\N \times \{1\}$. Note that $\Gamma_0$ contains all edges emitted from the vertices $\N \times \{0\}$ (the top row); moreover, for each loop $e_n = ((n,0), (n,0))$, $n \in \N$ in $\Gamma_+$, the monomorphisms $\alpha_{e_n}$ and $\alpha_{\overline{e_n}}$ are defined by
    \[ \alpha_{e_n}(1_{e_n}) = 2 \times 1_{(n,0)} \quad \text{and} \quad \alpha_{\overline{e_n}}(1_{e_n}) = 3 \times 1_{(n,0)} \]
    respectively.
\end{remark}

\begin{prop} \label{prop:C*(G^N,M) Kirchberg}
    For $N$ and $M$ as in Construction~\ref{construction:N,M}, the $C^*$-algebra $C^*(\GG_+^{N,M})$ is a stable UCT Kirchberg algebra.
\end{prop}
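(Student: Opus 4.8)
The plan is to verify the three hypotheses of Proposition~\ref{prop:Kirchberg conditions}, which will show that $C^*(\GG_+^{N,M})$ is a UCT Kirchberg algebra, and then to establish stability separately. Throughout I would work with the subgraph $\Gamma_0 \subseteq \Gamma_+$ from Remark~\ref{remark:G^N,M subgraph}, which already contains every vertex of $\Gamma_+$ together with the loops at the vertices $\N \times \{0\}$ and the ``ladder'' edges coming from the matrix $X$.

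For condition~(1) I would argue that $\Gamma_0$ is strongly connected, i.e.\ there is a directed path between any ordered pair of its vertices: the $X$-edges link level $k$ to levels $k \pm 1$, and the edges between the two rows $\N \times \{0\}$ and $\N \times \{1\}$ let one pass between rows, so one can reach every vertex from every other by moving up and down the ladder. Since $\Gamma_0$ contains all vertices of $\Gamma_+$ and $\Gamma_0 \subseteq \Gamma_+$, the graph $\Gamma_+$ is strongly connected as well, which is more than enough for cofinality (the standard formulations only require one-sided reachability from each vertex to each infinite path). Condition~(2) is immediate, since by Remark~\ref{remark:G^N,M subgraph} the loop $\eta = e_0$ at the vertex $(0,0)$ has $n_{e_0} = 2 \geq 2$. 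For condition~(3) I would take the infinite path $\alpha = e_0 e_0 e_0 \cdots$ obtained by repeating this loop; as $n_{e_0} = 2$ and $m_{e_0} = 3$, the relevant quantity is
\[ \left\langle \frac{m_{e_1}\cdots m_{e_{k-1}}}{n_{e_1}\cdots n_{e_k}} \right\rangle = \left\langle \frac{3^{k-1}}{2^k} \right\rangle = 2^k, \]
since $\gcd(3^{k-1}, 2^k) = 1$, so $\limsup_{k\to\infty} 2^k = \infty$. Proposition~\ref{prop:Kirchberg conditions} then gives that $C^*(\GG_+^{N,M})$ is a UCT Kirchberg algebra.

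It remains to prove stability, which I expect to be the main point requiring care. Having shown the algebra is separable, simple and purely infinite, I would invoke the dichotomy for $\sigma$-unital simple purely infinite $C^*$-algebras (Zhang): such an algebra is either unital or stable, so it suffices to show that $C^*(\GG_+^{N,M})$ is nonunital. For this I would pass to the groupoid model of Theorem~\ref{thm:semigroup crossed product}, whose unit space is the infinite path space $E_\Sigma^\infty$. Because $\Gamma^0 = \N \times \{0,1\}$ is infinite and $\GG_+^{N,M}$ is row-finite with no sources, $E_\Sigma^\infty = \bigsqcup_{v \in \Gamma^0} Z(v)$ is an infinite disjoint union of nonempty clopen sets and hence noncompact; therefore the associated \'etale groupoid $C^*$-algebra $C^*(\SS_{\GG_+} \ltimes E_\Sigma^\infty) \cong C^*(\GG_+^{N,M})$ is nonunital. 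Combining this with the dichotomy shows the algebra is stable.

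The step I expect to be most delicate is this last one: pinning down the correct hypotheses for the unital/stable dichotomy, and justifying cleanly that noncompactness of the groupoid unit space forces nonunitality of the $C^*$-algebra (a compact unit space would supply the unit as the characteristic function of $E_\Sigma^\infty$, while conversely a unit would restrict to a unit of $C_0(E_\Sigma^\infty)$). By contrast, the verification of conditions~(1)--(3) is comparatively routine once the explicit structure of $\Gamma_0$ and the loop data $(n_{e_0}, m_{e_0}) = (2,3)$ from Remark~\ref{remark:G^N,M subgraph} are in hand.
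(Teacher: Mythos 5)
Your proposal is correct and follows essentially the same route as the paper: verify conditions (1)--(3) of Proposition~\ref{prop:Kirchberg conditions} using the strongly connected subgraph $\Gamma_0$ of Remark~\ref{remark:G^N,M subgraph} and the loop at $(0,0)$ with $(n_e, m_e) = (2,3)$, then obtain stability from Zhang's unital-or-stable dichotomy for simple purely infinite $C^*$-algebras. The only difference is that where the paper simply notes that $\Gamma_+$ has infinitely many vertices and concludes nonunitality, you spell this out via noncompactness of the groupoid unit space $E_\Sigma^\infty$ --- a valid and slightly more detailed justification of the same point, not a different argument.
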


\begin{proof}
    We apply Proposition~\ref{prop:Kirchberg conditions}. To see that $\Gamma_+$ is cofinal, note that the subgraph $\Gamma_0$ described in Remark~\ref{remark:G^N,M subgraph} is strongly connected (that is, there is a directed path from any vertex in $\Gamma_0$ to any other vertex in $\Gamma_0$): the vertices in the bottom row of the figure are strongly connected via the horizontal edges between them, and one can travel between the top and bottom rows via the vertical or diagonal edges. Since $\Gamma_0$ has the same vertex set as $\Gamma_+$, we have that $\Gamma_+$ is also strongly connected and thus cofinal.
    
    For (2) and (3), take the unique loop $e$ at $(0,0) \in \Gamma^0$. Since $n_e = 2$, the loop $e$ satisfies the condition in (2). Moreover, for $\alpha = e e \dots$, the fraction $m_{e_1}\dots m_{e_{k-1}} / n_{e_1}\dots n_{e_k}$ in (3) is equal to $3^{k-1}/2^k$, and $\limsup_{k\to\infty} \langle 3^{k-1}/2^k \rangle = \limsup_{k\to\infty} 2^k = \infty$; hence (3) is also satisfied. Then Proposition~\ref{prop:Kirchberg conditions} gives that $C^*(\GG^{N,M}_+)$ is a UCT Kirchberg algebra. Finally, since $\Gamma_+$ has infinitely many vertices, the algebra $C^*(\GG_+)$ is non-unital, and hence is stable by \cite[Theorem~1.2]{Zhang}.
\end{proof}

\begin{proof}[Proof of Theorem~\ref{thm:dgog Kirchberg}]
    Write $G_0 = K_0(A)$ and $G_1 = K_1(A)$, and let $N$ and $M$ be as in Construction~\ref{construction:N,M}. Proposition~\ref{prop:C*(G^N,M) Kirchberg} gives that $C^*(\GG^{N,M}_+)$ is a stable UCT Kirchberg algebra, and Corollary~\ref{coro:C*(G^N,M) K-theory} gives that the $K$-theory of $C^*(\GG^{N,M}_+)$ is isomorphic to the $K$-theory of $A$. Thus, the Kirchberg--Phillips classification theorem gives that $C^*(\GG_+) \cong A$.
    
    Now we show that $C_0(\partial X_{\GG_+, x}) \rtimes \pi_1(\GG, x) \cong C^*(\GG_+)$ for any $x \in \Gamma^0$. We know from Remark~\ref{rmk:stable isomorphism} that the $C^*$-algebras are stably isomorphic, so the crossed product $C_0(\partial X_{\GG_+, x}) \rtimes \pi_1(\GG, x)$ is also a UCT Kirchberg algebra with $K$-theory $(G_0, G_1)$. All that remains is to show that the crossed product is not unital (equivalently that the space $\partial X_{\GG_+, x}$ is not compact); then \cite[Theorem~1.2]{Zhang} implies that it is stable, and again the Kirchberg--Phillips classification theorem would give that $A \cong C^*(\GG_+) \cong C_0(\partial X_{\GG_+, x}) \rtimes \pi_1(\GG, x)$ as claimed.
    
    Since the spaces $\partial X_{\GG_+, x}$ are isomorphic for each $x \in \Gamma^0$, it is enough to show that $\partial X_{\GG_+, x}$ is not compact for a particular choice of $x \in \Gamma^0$. We choose $x$ to be the vertex $(0,0) \in \Gamma^0$. Consider the open cover $\{Z(\gamma G_{s(\gamma)}) : \gamma \in x F(\GG)\}$ of $\partial X_{\GG_+, x}$; we show that it does not admit a finite subcover. Indeed, let $\gamma_1, \dots, \gamma_n \in x F(\GG)$ be a finite set of $\Sigma$-normalised $\GG$-words in $x F(\GG)$ (where $\Sigma$ is some fixed set of transversals for $\GG$). Write $k$ for the maximum length of $\gamma_i$, $i=1, \dots, n$, and write $e$ for the unique loop in $\Gamma_+$ at $x$. Now consider the eventually directed ($\Sigma$-normalised) infinite $\GG$-word $\xi = \overline{e}\cdots\overline{e}1_{r(e)}ee\dots$ beginning with $k+1$ repetitions of the loop $\overline{e}$. We have that $\xi$ cannot have the form $\gamma_i \alpha$ for any $i=1, \dots, n$ and any infinite $\GG_+$-word $\alpha$, since the edge $\overline{e}$ can only appear in $\gamma_i$ and no $\gamma_i$ can contain $k+1$ copies of $\overline{e}$. Hence $\xi \notin Z(\gamma_i G_{s(\gamma_i)})$ for all $i=1, \dots, n$, so $\{Z(\gamma_i G_{s(i)})\}_{i=1}^n$ is not a cover of $\partial X_{\GG_+, x}$. This proves that $\partial X_{\GG_+, x}$ is not compact, as claimed.
\end{proof}

\bibliographystyle{plain}
\bibliography{dgog}

\end{document}